\numberwithin{equation}{section}
\newtheorem{Theorem}{Theorem}
\newtheorem{theorem}{Theorem}[section]
\newtheorem{lemma}[theorem]{Lemma}
\newtheorem{corollary}[theorem]{Corollary}
\theoremstyle{definition}
\newtheorem{definition}[theorem]{Definition}
\DeclareMathOperator{\re}{Re}
\DeclareMathOperator{\im}{Im}
\newcommand{\leb}{\mathscr{L}}
\newcommand{\cont}{\mathscr{C}}
\newcommand{\dom}{\mathscr{D}}
\newcommand{\subdom}{\mathscr{D}}
\newcommand{\form}{\mathscr{E}}
\newcommand{\fourier}{\mathscr{F}}
\newcommand{\op}{L}
\newcommand{\harm}{{\mathrm{harm}}}
\newcommand{\rad}{{\mathrm{rad}}}
\newcommand{\R}{\mathbf{R}}
\newcommand{\N}{\mathbf{N}}
\newcommand{\hh}{\mathbf{H}}
\newcommand{\ind}{\mathbf{1}}
\newcommand{\ph}{\varphi}
\newcommand{\eps}{\varepsilon}
\newcommand{\thet}{\vartheta}
\newcommand{\tnorm}[1]{\lVert #1 \rVert}
\newcommand{\tscalar}[1]{\langle #1 \rangle}
\newcommand{\expr}[1]{\left( #1 \right)}
\DeclareMathOperator{\ext}{ext}
\DeclareMathOperator{\extt}{e\widetilde{x}t}
\renewcommand{\le}{\leqslant}
\renewcommand{\ge}{\geqslant}
\newcommand{\formula}[2][nolabel]%
{%
 \ifthenelse{\equal{#1}{nolabel}}%
 {\begin{align*} #2 \end{align*}}%
 {%
  \ifthenelse{\equal{#1}{}}%
  {\begin{align} #2 \end{align}}%
  {\begin{align} \label{#1} #2 \end{align}}%
 }%
}
\theoremstyle{definition}
\newcommand{\C}{\mathbb{C}}
\newcommand{\sub}{\subseteq}
\theoremstyle{definition}
\begin{document}

\title[Extension technique]{Extension technique for complete Bernstein functions of the Laplace operator}
\author{Mateusz Kwaśnicki, Jacek Mucha}
\thanks{Work supported by the Polish National Science Centre (NCN) grant no.\@ 2015/19/B/ST1/01457}
\address{Mateusz Kwaśnicki, Jacek Mucha \\ Faculty of Pure and Applied Mathematics \\ Wrocław University of Science and Technology \\ ul. Wybrzeże Wyspiańskiego 27 \\ 50-370 Wrocław, Poland}
\email{mateusz.kwasnicki@pwr.edu.pl}
\email{jacek.mucha@pwr.edu.pl}
\date{\today}
\keywords{Extension technique, fractional Laplacian, non-local operator, complete Bernstein function, Krein's string}
\subjclass[2010]{Primary: 35J25, 35J70, 47G20. Secondary: 60J60, 60J75}

\begin{abstract}
We discuss representation of certain functions of the Laplace operator~$\Delta$ as Dirichlet-to-Neumann maps for appropriate elliptic operators in half-space. A~classical result identifies $(-\Delta)^{1/2}$, the square root of the $d$-dimensional Laplace operator, with the Dirichlet-to-Neumann map for the $(d + 1)$-dimensional Laplace operator $\Delta_{t,x}$ in $(0, \infty) \times \R^d$. Caffarelli and Silvestre extended this to fractional powers $(-\Delta)^{\alpha/2}$, which correspond to operators $\nabla_{t,x} (t^{1 - \alpha} \nabla_{t,x})$. We provide an analogous result for all complete Bernstein functions of $-\Delta$ using Krein's spectral theory of strings.

Two sample applications are provided: a Courant--Hilbert nodal line theorem for harmonic extensions of the eigenfunctions of non-local Schrödinger operators $\psi(-\Delta) + V(x)$, as well as an upper bound for the eigenvalues of these operators. Here $\psi$ is a complete Bernstein function and $V$ is a confining potential.
\end{abstract}

\maketitle

%
%

\section{Introduction}
\label{sec:intro}

A classical result states that the Dirichlet-to-Neumann operator in half-space is the square root of the Laplace operator; namely, if $u(t, x)$ is harmonic in $\hh = (0, \infty) \times \R^d$ with boundary value $f(x) = u(0, x)$, then, given some boundedness condition on $u$, we~have
\formula{
 -(-\Delta)^{1/2} f(x) & = \partial_t u(0, x) .
}
The above observation was extended to general fractional powers of the Laplace operator in the context of non-local partial differential equations by Caffarelli and Silvestre in~\cite{bib:cs07}: for~$\alpha \in (0, 2)$, if $u$ satisfies the elliptic equation
\formula[eq:oph:cs]{
 \nabla_{t,x} (t^{1 - \alpha} \nabla_{t,x} u(t, x)) & = 0
}
in $\hh$ with boundary value $f(x) = u(0, x)$, then, under appropriate boundedness assumption on $u$, we have
\formula[eq:op:cs]{
\begin{aligned}
 -(-\Delta)^{\alpha/2} f(x) & = \frac{|\Gamma(-\tfrac{\alpha}{2})|}{\alpha 2^\alpha \Gamma(\tfrac{\alpha}{2})} \lim_{t \to 0^+} t^{1 - \alpha} \partial_t u(t, x) \\
 & = \frac{|\Gamma(-\tfrac{\alpha}{2})|}{2^\alpha \Gamma(\tfrac{\alpha}{2})} \lim_{t \to 0^+} \frac{u(t, x) - u(0, x)}{t^\alpha} \, .
\end{aligned}
}
Noteworthy, the above extension dates back to the paper of Molchanov and Ostrovski~\cite{bib:mo69} within the probabilistic context, and it has been used by other authors before the work of Caffarelli and Silvestre; see, for example~\cite{bib:d90,bib:m87,bib:gz03}. Nevertheless, the representation of the fractional Laplace operator given in~\eqref{eq:op:cs} is now known as the Caffarelli--Silvestre extension technique. It is most commonly applied in the setting of $\leb^2$ spaces, but versions for other Banach spaces or operators are also available, see~\cite{bib:atw18,bib:gms13,bib:st10}. In fact, the above technique works for fractional powers of essentially arbitrary nonnegative self-adjoint operators. For a brief discussion and further properties, we refer to Section~2.8 in~\cite{bib:k17}.

It is a relatively simple consequence of \emph{Krein's spectral theory of strings} that if the weight $t^{1 - \alpha}$ in~\eqref{eq:oph:cs} is replaced by a more general function $a(t)$, then the corresponding Dirichlet-to-Neumann operator is again a function of $-\Delta$, say $\psi(-\Delta)$, and furthermore $\psi$ is a \emph{complete Bernstein function}. Conversely, any complete Bernstein function of $-\Delta$ can be represented in this way, if one allows for certain singularities of $a(t)$. Even though this method has already been discussed in literature (see, for example,~\cite{bib:fls15,bib:k11,bib:wiki}), finding a good reference is problematic.

The main purpose of this article is to fill in this gap and discuss rigorously the above-mentioned general extension technique. This is complemented by two applications for non-local Schrödinger operators $\psi(-\Delta) + V(x)$: a Courant--Hilbert theorem on nodal domains (for the extension problem), and an upper bound for the eigenvalues. For simplicity, we focus on $L^2(\R^d)$ results, although a more general approach seems to be possible.

We remark that the extension technique can also be generalised in different directions. For example, higher-order powers of $-\Delta$ can be studied in a somewhat similar way, see~\cite{bib:cg11,bib:gs18,bib:rs16,bib:y18}. Furthermore, in~\cite{bib:fgmt15} a closely related, but essentially different extension technique is developed in a non-commutative setting, for the sub-Laplacian on the Heisenberg group.

\subsection{Extension technique}

If $\psi$ is a function on $[0, \infty)$, by $\psi(-\Delta)$ we understand the Fourier multiplier on $\leb^2(\R^d)$ with symbol $\psi(|\xi|^2)$, that is, the operator
\formula[eq:op]{
 \psi(-\Delta) f & = \fourier^{-1} (\psi(|\cdot|^2) \fourier f(|\cdot|)) .
}
Here $\fourier$ denotes the Fourier transform, and $f$ is in the domain $\dom(\psi(-\Delta))$ of $\psi(-\Delta)$ if both $f(\xi)$ and $\psi(|\xi|^2) \fourier f(\xi)$ are in $\leb^2(\R^d)$. Our goal is to identify the operator $\psi(-\Delta)$, for appropriate functions $\psi$, with the Dirichlet-to-Neumann operator for an appropriate elliptic differential operator in $\hh$.

The differential operators in $\hh$ that we consider are of the form
\formula[eq:oph]{
 \nabla_{t,x} (a(t) \nabla_{t,x} u(t, x)) ,
}
sometimes expressed in an equivalent way as
\formula[eq:oph:t]{
 \Delta_{t,x} u(t, x) + \frac{a'(t)}{a(t)} \, \partial_t u(t, x) & = \frac{1}{a(t)} \, \nabla_{t,x} (a(t) \nabla_{t,x} u(t, x)) ,
}
or, in variable $s$ such that $ds = (a(t))^{-1} dt$,
\formula[eq:oph:s]{
 \partial_s^2 u(s, x) + A(s) \Delta_x u(s, x) .
}
Here $a(t)$ and $A(s)$ are nonnegative coefficients, related one to the other by the condition $A(s) ds = a(t) dt$. For further details about this change of variable, see Section~\ref{sec:krein}. In fact we allow the coefficient $A(s)$ to be a locally finite measure $A(ds)$ on some interval $[0, R)$, with $R$ is possibly infinite; in this case the corresponding differential operator is defined on $(0, R) \times \R^d$. To keep the presentation simple, however, we discuss this general case only in Appendix~\ref{app:krein}, and (with minor exceptions) in the remaining part of the article we only consider the \emph{regular} case, when $R = \infty$ and $A(s)$ is a nonnegative, locally integrable function.

We consider functions $u(t, x)$ (or $u(s, x)$) harmonic with respect to any of the above differential operators. The Dirichlet-to-Neumann operator is defined as
\formula[eq:op:t]{
 \op f(x) & = \lim_{t \to 0^+} \frac{u(t, x) - u(0, x)}{s(t)} = \lim_{t \to 0^+} a(t) \partial_t u(t, x)
}
for the operator~\eqref{eq:oph:t} (where $s$ is again given by $ds = (a(t))^{-1} dt$), and
\formula[eq:op:s]{
 \op f(x) & = \partial_s u(0, x) = \lim_{s \to 0^+} \frac{u(s, x) - u(0, x)}{s}
}
when the form given in~\eqref{eq:oph:s} is considered.

The main result in this part is contained in the following theorem, which is a relatively simple corollary of Krein's spectral theory of strings.

\begin{Theorem}\label{Thm:op}
Given the coefficient $a(t)$ or $A(s)$ \textnormal{(}or, more generally, $A(ds)$\textnormal{)}, there is a complete Bernstein function $\psi$ such that $\op = \psi(-\Delta)$. Conversely, for any complete Bernstein function $\psi$ there is a unique corresponding coefficient $A(ds)$.
\end{Theorem}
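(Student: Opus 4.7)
The plan is to reduce the multi-dimensional extension problem to a one-parameter family of one-dimensional boundary value problems by Fourier transforming in the $x$ variable, and then invoke Krein's correspondence between strings and complete Bernstein functions. Taking $\fourier_x$ on both sides of $\partial_s^2 u(s,x) + A(s) \Delta_x u(s,x) = 0$, for each $\xi \in \R^d$ the Fourier mode $\hat u(s,\xi)$ satisfies the one-dimensional Krein string equation
\formula{
 \partial_s^2 \hat u(s,\xi) & = |\xi|^2 A(s) \hat u(s,\xi), \qquad \hat u(0,\xi) = \hat f(\xi) ,
}
and the boundedness/decay condition at $s \to R$ singles out a unique normalized \emph{principal} solution $\phi_\lambda$ of $\phi'' = \lambda A \phi$ with $\phi(0) = 1$, so that $\hat u(s,\xi) = \phi_{|\xi|^2}(s) \hat f(\xi)$. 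In view of \eqref{eq:op:s} one then has $\widehat{\op f}(\xi) = \phi_{|\xi|^2}'(0) \hat f(\xi)$; setting $\psi(\lambda) := -\phi_\lambda'(0) \ge 0$ (nonnegativity comes from monotonicity of the principal solution), the operator $\op$ is, up to the sign convention adopted in the theorem, the Fourier multiplier with symbol $\psi(|\xi|^2)$, i.e.\ $\psi(-\Delta)$ as in~\eqref{eq:op}. The equivalence with the formulation~\eqref{eq:oph:t} follows from the routine change of variables $ds = (a(t))^{-1} dt$, which also matches~\eqref{eq:op:s} with~\eqref{eq:op:t}.

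Both directions of the theorem then follow from Krein's spectral theory of strings, whose central result asserts that the map $A(ds) \mapsto \psi$ just described is a bijection between equivalence classes of Krein strings on $[0, R)$ (with $R \le \infty$) and complete Bernstein functions on $[0, \infty)$. The forward direction yields existence (the $\psi$ obtained from any $A$ is indeed a complete Bernstein function), while the converse direction yields uniqueness of the string corresponding to a given $\psi$. This correspondence is the substantive ingredient; its precise statement and proof would be developed in Appendix~\ref{app:krein}.

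The main obstacle is therefore not the Fourier reduction itself, which is essentially formal once one works mode-by-mode, but the rigorous handling of the one-dimensional string theory, especially when $A$ is a general locally finite measure possibly with atoms or non-integrable singularities, or when $R < \infty$; in such situations the extension equation has to be interpreted in an appropriate weak sense, and the principal solution has to be constructed via the associated integral equation rather than by standard ODE theory. One also has to check that $\lambda \mapsto \phi_\lambda'(0)$ is measurable (in fact it is real-analytic under mild regularity assumptions on $A$) so that $\psi(-\Delta)$ is a well-defined Fourier multiplier on $\leb^2(\R^d)$, and that the $\leb^2$-valued limit appearing in~\eqref{eq:op:s} indeed coincides with the Fourier characterization derived above for every $f \in \dom(\psi(-\Delta))$.
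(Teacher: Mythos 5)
Your proposal is correct and follows essentially the same route as the paper: Fourier diagonalisation in $x$ reducing the extension equation to the string equation $\ph_\lambda'' = \lambda A \ph_\lambda$ for $\lambda = |\xi|^2$, identification of the bounded (principal) solution so that the Dirichlet-to-Neumann map becomes the multiplier $\psi(|\xi|^2) = -\ph_\lambda'(0)$, and an appeal to Krein's correspondence for both the complete Bernstein property and the uniqueness of the string, with the $a(t)$ formulation handled by the change of variable $ds = (a(t))^{-1}dt$. This is exactly how the paper assembles Theorem~\ref{Thm:op} from Theorem~\ref{thm:krein} together with Theorems~\ref{thm:oph:s}, \ref{thm:harm}, \ref{thm:oph:t} and~\ref{thm:harm:G}.
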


The above result, as well as many other results in this section, is stated in a somewhat informal way: we do not specify the conditions on $A(s)$ or $a(t)$, nor we give a rigorous definition of $L$. In fact, Theorem~\ref{Thm:op} is a combination of several results: bijective correspondence between $A(s)$ and $\psi$ is a part of Theorem~\ref{thm:krein}; the identification of $L$ and $\psi(-\Delta)$ in the regular case is given in Theorems~\ref{thm:oph:s} and~\ref{thm:harm} for the operator~\eqref{eq:oph:s}, and in Theorem~\ref{thm:oph:t} for the operator~\eqref{eq:oph:t}; the general case is discussed in Theorem~\ref{thm:harm:G}. These results are carefully stated, and include all necessary definitions and assumptions.

Noteworthy, the \emph{Krein correspondence} between $\psi$ and $a(t)$ or $A(s)$ described in Theorem~\ref{Thm:op} is not explicit, and there is no easy way to find the coefficients $a(t)$ or $A(s)$ corresponding to a given complete Bernstein function $\psi$. Furthermore, only a handful of explicit pairs of corresponding $\psi$ and $a(t)$ or $A(s)$ are known.

It is often more convenient to have the identification described in Theorem~\ref{Thm:op} at the level of quadratic forms. This is discussed in our next result, which involves the quadratic form $\form_H(u, u)$, defined by
\formula[eq:form:t]{
 \form_H(u, u) & = \int_0^\infty \int_{\R^d} a(t) |\nabla_{t,x} u(t, x)|^2 dx dt
}
for the operator~\eqref{eq:oph:t}, and by
\formula[eq:form:s]{
 \form_H(u, u) & = \int_0^\infty \int_{\R^d} \bigl((\partial_s u(s, x))^2 + A(s) |\nabla_x u(s, x)|^2\bigr) dx ds
}
for the operator~\eqref{eq:oph:s}.

\begin{Theorem}\label{Thm:form}
If $u(0, x) = f(x)$, then
\formula{
 \form_H(u, u) & \ge \int_{\R^d} \psi(|\xi|^2) |\fourier f(\xi)|^2 d\xi .
}
Furthermore, equality holds if and only if either side is finite and $u$ is harmonic with respect to the corresponding operator~\eqref{eq:oph:t} or~\eqref{eq:oph:s}.
\end{Theorem}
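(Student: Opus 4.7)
The plan is to apply Plancherel's theorem in the $x$ variable in order to decompose $\form_H(u, u)$ into a family of one-dimensional Dirichlet integrals indexed by $\xi \in \R^d$, and then to solve each one-dimensional variational problem by appealing to the Krein correspondence recorded in Theorem~\ref{thm:krein}.

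Starting from the form~\eqref{eq:form:s}, I would introduce the partial Fourier transform $\hat u(s, \xi) = \fourier_x u(s, \cdot)(\xi)$; Plancherel together with Fubini (the latter permitted by nonnegativity of the integrand) gives
$$\form_H(u, u) = \int_{\R^d} \int_0^\infty \bigl(|\partial_s \hat u(s, \xi)|^2 + |\xi|^2 A(s) |\hat u(s, \xi)|^2\bigr) ds \, d\xi.$$
Writing $v(s) := \hat u(s, \xi)$, the result will follow once I establish, for each fixed $\xi$, the one-dimensional Dirichlet bound
$$\int_0^\infty \bigl(|v'(s)|^2 + |\xi|^2 A(s) |v(s)|^2\bigr) ds \ge \psi(|\xi|^2) |\fourier f(\xi)|^2$$
for every admissible $v$ with $v(0) = \fourier f(\xi)$, with equality precisely when $v$ solves the Krein-string ODE $v''(s) = |\xi|^2 A(s) v(s)$ with the appropriate decay at $s = \infty$.

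For this one-dimensional step I would decompose $v(s) = \fourier f(\xi) \ph_\xi(s) + w(s)$, where $\ph_\xi$ is the normalized decaying solution of $\ph_\xi'' = |\xi|^2 A \ph_\xi$ with $\ph_\xi(0) = 1$ supplied by Krein's theory, and $w(0) = 0$. Expanding the integrand and integrating by parts, the cross term vanishes thanks to the ODE for $\ph_\xi$, the Dirichlet condition $w(0) = 0$, and the boundary behaviour of $\ph_\xi$ at infinity. A second integration by parts reduces the $|\fourier f(\xi)|^2$-term to $-\ph_\xi'(0) |\fourier f(\xi)|^2$, and the identification of $\psi$ as the Dirichlet-to-Neumann map for the Krein string (Theorem~\ref{thm:krein}) yields $\psi(|\xi|^2) = -\ph_\xi'(0)$. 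The remaining $|w|^2$-term is manifestly nonnegative, producing the desired inequality, and vanishes precisely when $w \equiv 0$.

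The main obstacle is largely bookkeeping: one must justify the boundary terms at $s = \infty$ in the integrations by parts and the interchange of integrals in the equality statement. These points are handled under the finiteness hypothesis on $\form_H(u, u)$, as nonnegativity of the two terms in the integrand supplies enough a~priori control to apply Fubini and to guarantee the required decay. When both sides are infinite the inequality is trivial. For the equality clause one also has to check that the $\xi$-pointwise minimizers $\hat u(s, \xi) = \fourier f(\xi) \ph_\xi(s)$ reassemble into a function $u$ that is harmonic with respect to~\eqref{eq:oph:s}, which is exactly the content of Theorems~\ref{thm:oph:s} and~\ref{thm:harm}. The case of the form~\eqref{eq:form:t} then reduces to~\eqref{eq:form:s} via the change of variable $ds = a(t)^{-1} dt$ from Section~\ref{sec:krein}.
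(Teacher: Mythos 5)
Your proposal is correct and is essentially the paper's own argument: the Plancherel--Fubini reduction to a one-dimensional variational problem for each fixed $\xi$, solved by decomposing around $\ph_\lambda$ and integrating by parts exactly as in the proof of~\eqref{eq:ph:min}, then reassembled through Theorems~\ref{thm:forms:s}, \ref{thm:min} and~\ref{thm:harm}, with the form~\eqref{eq:form:t} handled by the change of variable as in Lemma~\ref{lem:formh:sub}. The technical points you defer (boundary terms at infinity via the Schwarz bound and~\eqref{eq:ph:est}, and the identification of the a.e.\ boundary values $\fourier_x u(0,\xi) = \fourier f(\xi)$ through the ACL characterisation and~\eqref{eq:u:est}) are precisely the ones the paper fills in, so nothing essential is missing.
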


In the regular case, a detailed statement of Theorem~\ref{Thm:form}, which includes all necessary assumptions on $u$ and $f$, is given in Theorems~\ref{thm:forms:s} and~\ref{thm:min} for the operator~\eqref{eq:oph:s}, and in Theorem~\ref{thm:forms:t} for the operator~\eqref{eq:oph:t}. The general case is studied in Theorem~\ref{thm:forms:G}.

\subsection{Non-local Schrödinger operators}

The eigenvalues and eigenfunctions of the non-local Schrödinger operator $L + V(x) = \psi(-\Delta) + V(x)$ admit a standard variational description in terms of the quadratic form
\formula{
 \int_{\R^d} \psi(|\xi|^2) |\fourier f(\xi)|^2 d\xi + \int_{\R^d} V(x) (f(x))^2 dx .
}
More precisely, the $n$-th eigenfunction is the minimiser of the above expression among all functions $f$ with $\leb^2(\R^d)$ norm $1$ that are orthogonal to the preceding $n - 1$ eigenfunctions. (Here and below eigenfunctions are always arranged so that the corresponding eigenvalues for a non-decreasing sequence. We assume that $V$ is a confining potential, so that $L + V(x)$ has purely discrete spectrum).

Theorem~\ref{Thm:form} implies that if $\psi$ is a complete Bernstein function, then the above quadratic form can be replaced by the local expression
\formula{
 \form_H(u, u) + \int_{\R^d} V(x) (u(0, x))^2 dx .
}
In this case the $n$-th eigenfunction is the boundary value of the minimiser of the above quadratic form among all functions $u$ such that the boundary value $u(0, \cdot)$ is orthogonal to the preceding $n - 1$ eigenfunctions and has $\leb^2(\R^n)$ norm $1$. For a formal statement, we refer to Theorem~\ref{thm:var:h}.

Standard arguments show that, given minimal regularity of $A(s)$, the harmonic extension of the $n$-th eigenfunction can have no more than $n$ nodal parts.

\begin{Theorem}\label{Thm:ch}
Suppose that the coefficient $a(t)$ or $A(s)$ is positive and locally Lipschitz continuous, and that $V$ is a locally bounded confining potential. Let $f_n$ be the $n$-th eigenfunction of the operator $\op + V(x)$, and let $u_n$ be the extension of $f_n$ to $\hh$ which is harmonic with respect to the operator $\op_H$. Then $u_n$ has no more than $n$ nodal parts.
\end{Theorem}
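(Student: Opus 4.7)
The plan is to follow Courant's classical nodal domain argument, using the variational characterisation of eigenvalues in the extended form (Theorem~\ref{thm:var:h} coupled with the equality clause in Theorem~\ref{Thm:form}) in place of the usual Rayleigh-quotient calculation. Suppose for contradiction that $u_n$ has at least $n+1$ nodal parts $N_1, \ldots, N_{n+1}$, i.e.\ connected components of $\set{(t,x) \in \hh : u_n(t,x) \neq 0}$. A maximum-principle argument for the operator $\op_H = \nabla_{t,x}(a(t) \nabla_{t,x} \cdot)$ excludes ``floating'' nodal parts whose closure lies in~$\hh$ (such a part would have zero boundary values on $\partial N_i \subset \hh$, hence $u_n \equiv 0$ on it), so every $N_i$ must touch $\partial \hh$. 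Consequently, setting $u^{(i)} := u_n \ind_{N_i}$ and $f^{(i)} := u^{(i)}(0, \cdot)$, the $f^{(i)}$ are nonzero and have pairwise disjoint supports in $\R^d$.

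I would next establish the localised Rayleigh identity
\[
  \form_H(u^{(i)}, u^{(i)}) + \int_{\R^d} V(x) (f^{(i)}(x))^2 dx = \lambda_n \tnorm{f^{(i)}}_{\leb^2}^2
\]
for each $i$, by a Green integration by parts on $N_i$. The bulk term vanishes since $u_n$ is $\op_H$-harmonic; the portion of $\partial N_i$ lying in $\hh$ contributes zero because $u^{(i)} = 0$ there; and the bottom-face contribution from $N_i \cap \partial \hh$ combines with the eigenvalue equation $(\op + V) f_n = \lambda_n f_n$ to produce the asserted right-hand side. Positivity and local Lipschitz regularity of $a(t)$, together with local boundedness of $V$, supply the interior regularity of $u_n$ needed to legitimise this manipulation, while an approximation by smooth exhaustions $N_i^\eps \Subset N_i$ handles the possibly irregular shape of $\partial N_i$.

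With these identities in hand, a dimension count completes the construction. The $n-1$ orthogonality conditions $\scalar{\sum c_i f^{(i)}, f_k} = 0$ (for $k = 1, \ldots, n-1$) cut the $(n+1)$-dimensional coefficient space down to dimension at least~$2$; imposing the additional constraint $c_{n+1} = 0$ still leaves a nontrivial solution $(c_1, \ldots, c_n, 0)$. For $u := \sum_{i=1}^{n} c_i u^{(i)}$ with boundary trace $f = u(0, \cdot)$, disjointness of supports and the localised identity above yield
\[
  \form_H(u, u) + \int_{\R^d} V(x) (f(x))^2 dx = \lambda_n \tnorm{f}_{\leb^2}^2, \qquad \tnorm{f}_{\leb^2}^2 > 0.
\]
Since $f \perp f_1, \ldots, f_{n-1}$, the variational principle (Theorem~\ref{thm:var:h}) forces $f$ to be an $n$-th eigenfunction of $\op + V$, and the equality clause of Theorem~\ref{Thm:form} forces $u$ to be its harmonic extension — in particular, a weak solution of $\nabla_{t,x}(a(t) \nabla_{t,x} u) = 0$ on all of $\hh$.

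The concluding and most delicate step is unique continuation. By construction $u \equiv 0$ on the nonempty open set $N_{n+1} \subset \hh$, yet $u$ is a weak solution of a strictly elliptic divergence-form equation with positive, locally Lipschitz coefficient $a(t)$. Aronszajn's strong unique continuation theorem therefore forces $u \equiv 0$ throughout $\hh$, contradicting $\tnorm{f}_{\leb^2} > 0$. The two delicate points in the plan are thus the careful justification of the Green identity on nodal parts of arbitrary geometry and the invocation of a unique continuation theorem appropriate to the present divergence-form elliptic operator; both requirements are accommodated by the positivity and local Lipschitz hypothesis on $a(t)$ stated in the theorem.
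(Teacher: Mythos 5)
Your overall architecture is the same as the paper's: a Courant-type argument based on (i) a localised Rayleigh identity for the nodal parts of $u_n$, (ii) a linear combination of $n$ of them whose trace is orthogonal to $f_1,\dots,f_{n-1}$, (iii) the conclusion that equality in the variational principle forces the trace to be a $\mu_n$-eigenfunction and $u$ to be its harmonic extension, and (iv) unique continuation applied to the remaining nodal domain. However, two of your steps have genuine gaps. First, the claim ``every nodal part must touch $\partial\hh$, consequently $f^{(i)}=u^{(i)}(0,\cdot)$ is nonzero'' is not a valid inference: a nodal domain $N_i$ can meet the boundary only in a Lebesgue-null set contained in the zero set of $f_n$ (e.g.\ touch it tangentially at a single point), in which case its trace vanishes almost everywhere even though $\overline{N_i}\cap\partial\hh\neq\emptyset$; moreover the maximum-principle exclusion of ``floating'' parts needs extra care on unbounded domains. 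Luckily the claim is not needed: once the localised identity $\form_{H,V}(u^{(i)},u^{(i)})=\mu_n\tnorm{f^{(i)}}_2^2$ is established, $f^{(i)}=0$ would give $\form_H(u^{(i)},u^{(i)})=0$, hence (by positivity of the coefficient) $u^{(i)}$ constant and so identically zero, a contradiction; the paper's proof never mentions boundary contact at all.

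Second, and more seriously, your justification of the localised Rayleigh identity by Green's formula with smooth exhaustions $N_i^\eps\Subset N_i$ does not work as described: on $\partial N_i^\eps$ the function $u_n$ is \emph{not} zero (it is nonzero throughout $N_i$), so the lateral boundary term $\int_{\partial N_i^\eps} u_n\, a\,\partial_\nu u_n$ has no reason to vanish, and controlling it as $\eps\to0$ is exactly as hard as the original difficulty with the rough boundary $\partial N_i$. The paper's substitute is purely form-theoretic and is the real content you are missing: Lemma~\ref{lem:nodal} shows, via the ACL characterisation of weak differentiability, that $v=\ind_{N_i}u_n$ lies in $\dom(\form_H)$ with $\nabla v=\ind_{N_i}\nabla u_n$ a.e.; then $\form_{H,V}(v,v)=\form_{H,V}(u_n,v)$ because $u_n v=v^2$ and $\nabla u_n\cdot\nabla v=|\nabla v|^2$ a.e., and $\form_{H,V}(u_n,v)=\form_V(f_n,v(0,\cdot))=\mu_n\int_{\R^d} f_n(x)\,v(0,x)\,dx$ follows from the weak eigenvalue equation together with the orthogonal decomposition $\dom(\form_H)=\dom_0\oplus\dom_\harm$ of Theorem~\ref{thm:min} --- no Neumann trace, no boundary integrals, and no regularity of $\partial N_i$ enter. (Also note that the rigorous statement, Theorem~\ref{thm:ch}, assumes integrability of $e^{-t\psi(|\xi|^2)}$ precisely to guarantee continuity of $u_n$, which you use implicitly when speaking of nodal domains as open connected components; and your $\lambda_n$ should be the eigenvalue $\mu_n$ of $\psi(-\Delta)+V$.) With the identity established this way, the remainder of your argument --- the dimension count with $c_{n+1}=0$, the equality forcing $f$ to be an eigenfunction and $u=\ext(f)$, and unique continuation (the paper cites Hörmander, Theorem~\ref{thm:ucp}, in place of Aronszajn) --- matches the paper's strong version and is sound.
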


A formal statement is given in Theorem~\ref{thm:ch}. By a simple geometric argument, Theorem~\ref{Thm:ch} implies that if $d = 1$, then $f_n$ has no more than $2 n - 1$ nodal parts. A natural conjecture states that in fact $f_n$ has no more than $n$ nodal parts, in fact --- also in higher dimensions; this is however an open problem even for the fractional Laplace operator with very simple potentials, for example, an infinite potential well. For further discussion, see~\cite{bib:bk04}.

If $d \ge 2$, Theorem~\ref{Thm:ch} does not provide any bound on the number of nodal parts of $f_n$. However, if $V(x)$ is a radial function, then the operator $\op + V(x)$ preserves the class of radial functions, and Theorem~\ref{thm:ch} can be applied to this restriction. This leads to the following result, which gives a bound on the number of nodal parts of radial eigenfunctions. This bound is still rather unsatisfactory, but it is applicable; see, for example,~\cite{bib:fl13,bib:fls15}.

\begin{Theorem}\label{Thm:ch:rad}
Suppose that $d \ge 2$, that the coefficient $a(t)$ or $A(s)$ is positive and locally Lipschitz continuous, and that $V$ is a radial, locally bounded confining potential. Let $f_{\rad,n}$ be the $n$-th radial eigenfunction of the operator $\op + V(x)$, and let $u_{\rad,n}$ be the extension of $f_{\rad,n}$ to $\hh$ which is harmonic with respect to the operator $\op_H$. Then $u_{\rad,n}$ has no more than $n$ nodal parts, and $f_{\rad,n}$ has no more than $2 n - 1$ nodal parts.
\end{Theorem}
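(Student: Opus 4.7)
The plan is to reduce Theorem~\ref{Thm:ch:rad} to Theorem~\ref{Thm:ch} by restricting the whole spectral problem to the subspace of radially symmetric functions. First I would observe that $L = \psi(-\Delta)$ commutes with rotations of $\R^d$, since its Fourier symbol $\psi(|\xi|^2)$ is radial; combined with the radiality of $V$, this shows that $L + V(x)$ preserves the subspace $L^2_{\rad}(\R^d)$, and the radial eigenfunction $f_{\rad,n}$ is characterised variationally by the Courant--Fischer principle restricted to radial functions. Analogously, the elliptic operator in~\eqref{eq:oph} is rotationally invariant in the $x$-variable, so averaging any solution over rotations in $x$ produces another solution with the same radial boundary value, and uniqueness of the harmonic extension forces $u_{\rad,n}$ to be radial in $x$. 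Thus there exists $\tilde u_n$ on the quadrant $(0,\infty) \times [0,\infty)$ with $u_{\rad,n}(t,x) = \tilde u_n(t, |x|)$.

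Next I would carry out the Courant--Hilbert argument underlying Theorem~\ref{Thm:ch} entirely within the radial subspace. Its scheme is to assume $u_{\rad,n}$ has $k > n$ nodal parts, decompose $u_{\rad,n}$ into the restrictions to each nodal part, and find a non-zero linear combination $v$ of these pieces whose boundary trace is $L^2$-orthogonal to $f_{\rad,1},\ldots,f_{\rad,n-1}$. The form inequality of Theorem~\ref{Thm:form}, combined with the fact that each nodal-part restriction already realises equality, forces $v$ to be a minimiser and hence an eigenfunction; but $v$ vanishes on an open set by construction, contradicting unique continuation for the elliptic operator, which is available because $a(t)$ is locally Lipschitz. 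Since $\form_H$, the $L^2$ inner product, and the potential term all respect rotations in $x$, this argument runs verbatim on the radial subspace. Rotation preserves connected components, so the nodal-part count of $u_{\rad,n}$ in $\hh$ equals that of $\tilde u_n$ on the quadrant, which is at most $n$.

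Finally, for the bound on $f_{\rad,n}$ I would invoke the planar counting argument alluded to after Theorem~\ref{Thm:ch} in the one-dimensional case. Writing $g(r) = \tilde u_n(0,r)$ and letting $k$ be the number of sign-constant intervals of $g$ on $[0,\infty)$, these $k$ intervals are in bijection with the nodal parts of $f_{\rad,n}$ in $\R^d$: each is either an open ball around the origin or an open spherical shell, and both are connected because $d \ge 2$. Adjacent boundary intervals of opposite sign lie in distinct nodal parts of $\tilde u_n$, and a direct planar count of the extremal ``nested arches'' configuration shows that $n$ nodal parts in the closed quadrant can touch the boundary in at most $2n - 1$ intervals, giving $k \le 2n - 1$.

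The main obstacle I anticipate is the first step: one must verify carefully that the Courant--Hilbert machinery transfers cleanly to the quadrant geometry $(0,\infty) \times [0,\infty)$ resulting from the radial reduction, which carries an implicit Neumann-type condition at $r = 0$ coming from rotational symmetry. Density of smooth radial test extensions and unique continuation on the radial subspace both need a brief sanity check, but once these are in place the remaining steps are essentially bookkeeping.
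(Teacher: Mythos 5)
Your proposal is correct and follows essentially the same route as the paper, which obtains Theorem~\ref{thm:ch:rad} by applying the Courant--Hilbert argument of Theorem~\ref{thm:ch} to the restriction of $\psi(-\Delta)+V$ to the radial subspace (the paper omits the details as ``very similar'') and then invoking the simple planar counting argument for the $2n-1$ bound on boundary nodal intervals. The quadrant worry you flag at the end can in fact be sidestepped: for $d \ge 2$ the nodal domains of the extension (radial in $x$) are rotationally invariant open subsets of $\hh$, so Lemma~\ref{lem:nodal}, the identity~\eqref{eq:nodal:form} and the unique continuation Theorem~\ref{thm:ucp} all apply directly in the full half-space, and the quadrant picture is only needed for the final geometric count.
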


For a rigorous statement we refer to Theorem~\ref{thm:ch:rad}. Again, it is conjectured that the number of nodal parts of $f_{\rad,n}$ in fact does not exceed $n$.

Curiously, to our best knowledge, given a complete Bernstein function $\psi$, it is not known whether there is a simpler way to verify that the coefficients $a(t)$ or $A(s)$ are locally Lipschitz continuous other than finding these coefficients explicitly.

Theorems~\ref{Thm:ch} and~\ref{Thm:ch:rad} are extensions of the results of Section~3.2 in~\cite{bib:fl13} or Section~5 in~\cite{bib:fls15}, where $\psi(\lambda) = \lambda^{\alpha/2}$ is studied. A rather simple modification shows that the assertion of Theorem~\ref{Thm:ch} holds true also when $V$ is a potential well, that is, for the operator $\psi(-\Delta)$ in a domain, with zero exterior condition. This modifications was studied, for $\psi(\lambda) = \lambda^{1/2}$, in~\cite{bib:bk04}.

Our last main result provides an upper bound for the eigenvalues $\mu_n$ of the non-local Schrödinger operator $\op + V(x)$ in terms of the eigenvalues $\lambda_n$ of a standard Schrödinger operator $-\Delta + \gamma V(x)$ for an appropriate constant $\gamma$ (depending on $n$). For simplicity, below we state the result for the fractional Laplace operator and homogeneous potentials, which is identical to Corollary~\ref{cor:est:fr:hom}. For a more general version, we refer to Theorem~\ref{thm:est}.

\begin{Theorem}\label{Thm:est}
Suppose that $V$ is a locally bounded, positive (except at zero) potential which is homogeneous with degree $p > 0$. Let $\lambda_n$ be the eigenvalues of $-\Delta + V(x)$, and let $\mu_n$ be the eigenvalues of $(-\Delta)^{\alpha/2} + V(x)$. Then
\formula{
 \mu_n & \le \lambda_n^{(2 + p) \alpha / (2 \alpha + 2 p)} .
}
\end{Theorem}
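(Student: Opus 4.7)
The plan is to deduce Theorem~\ref{Thm:est} from a more flexible general estimate (Theorem~\ref{thm:est}) combined with the scaling identity satisfied by homogeneous potentials. The general estimate rests on the tangent-line bound $x^{\alpha/2}\le(\alpha/2)\tau^{\alpha/2-1}x+(1-\alpha/2)\tau^{\alpha/2}$, valid for $x,\tau>0$ by concavity of $t\mapsto t^{\alpha/2}$ on $[0,\infty)$. Applied to $-\Delta$ via the functional calculus, this produces, for every $\tau>0$, the operator inequality
\[
 (-\Delta)^{\alpha/2} + V \le \tfrac{\alpha}{2}\tau^{\alpha/2-1}\bigl(-\Delta + \gamma V\bigr) + \bigl(1-\tfrac{\alpha}{2}\bigr)\tau^{\alpha/2}\,I,
\]
where the coupling $\gamma=(2/\alpha)\tau^{1-\alpha/2}$ is chosen so that the coefficient of $V$ on the right-hand side is exactly $1$. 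The Courant--Fischer min-max principle then yields
\[
 \mu_n \le \tfrac{\alpha}{2}\tau^{\alpha/2-1}\,\tilde\lambda_n(\gamma) + \bigl(1-\tfrac{\alpha}{2}\bigr)\tau^{\alpha/2},
\]
where $\tilde\lambda_n(\gamma)$ denotes the $n$-th eigenvalue of $-\Delta+\gamma V$. This two-parameter inequality is what I expect to constitute the abstract content of Theorem~\ref{thm:est}.

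To exploit the homogeneity of $V$ I would apply the dilation $\phi(x)\mapsto\phi(\gamma^{-1/(p+2)}x)$ to the eigenvalue equation $(-\Delta+V)\phi=\lambda\phi$. Using $V(\gamma^{-1/(p+2)}x)=\gamma^{p/(p+2)}V(x)$, a direct computation yields the scaling identity $\tilde\lambda_n(\gamma)=\gamma^{2/(p+2)}\lambda_n$. Substituting into the bound of the previous paragraph, the right-hand side becomes a sum $A(\tau)\lambda_n + B(\tau)$ of two explicit powers of $\tau$ with exponents $-p(2-\alpha)/(2(p+2))$ and $\alpha/2$, whose sum is $(p+\alpha)/(p+2)$. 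Minimising over $\tau>0$ is an elementary one-variable calculus problem; the optimal $\tau$ is proportional to $\lambda_n^{(p+2)/(2(p+\alpha))}$, and after this balancing both terms carry the exponent $\alpha(p+2)/(2(p+\alpha))$ in $\lambda_n$, producing a bound of the form $\mu_n\le C_{\alpha,p}\,\lambda_n^{(p+2)\alpha/(2(p+\alpha))}$, with the exponent matching the one claimed in Theorem~\ref{Thm:est}.

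The principal obstacle is to verify that the multiplicative constant $C_{\alpha,p}$ produced by the optimisation is at most $1$. The crude tangent-line step does not suffice: for $\alpha=1$, $p=2$ one computes $C_{1,2}=3\cdot 2^{-4/3}>1$. Closing this gap appears to require either a sharper ingredient than the pointwise affine majorant, or a genuinely variational argument that bypasses it altogether. The most promising route, and the one most in the spirit of the paper, is to work directly on the harmonic-extension side: Theorem~\ref{Thm:form} realises $\mu_n$ as the min-max of a local quadratic form on $\hh$, so plugging a carefully chosen harmonic extension of the dilated eigenfunctions $\phi_k(\rho\,\cdot)$ of $-\Delta+V$ into this form should leave only one scale parameter $\rho$ to be optimised over, and this single optimisation ought to be sharp. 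Alternatively, one could expand $x^{\alpha/2}=c_\alpha\int_0^\infty(1-e^{-sx})s^{-1-\alpha/2}\,ds$ via the Bernstein representation and estimate $1-e^{-sx}$ termwise, which may absorb enough of the spectral information carried by $V$ to tighten the constant to the claimed value.
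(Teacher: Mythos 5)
Your proposal does not prove the stated theorem: as you yourself compute, the tangent-line/operator-inequality route yields $\mu_n\le C_{\alpha,p}\,\lambda_n^{(2+p)\alpha/(2\alpha+2p)}$ with a constant that can exceed $1$ (e.g.\ $C_{1,2}=3\cdot 2^{-4/3}>1$), whereas the theorem asserts the bound with constant exactly $1$. The scaling identity $\tilde\lambda_n(\gamma)=\gamma^{2/(p+2)}\lambda_n$ and the resulting exponent are correct and coincide with the paper's final step, but the abstract input you attribute to Theorem~\ref{thm:est} is not what the paper proves, and the repair you gesture at on the extension side is not carried out; so there is a genuine gap precisely at the point you flag.

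The missing mechanism is the following lossless variational argument, which is the actual content of Theorem~\ref{thm:est}. Fix $\lambda>0$ and take $f$ in the span of the first $n$ eigenfunctions of $-\Delta+\gamma V$ with $\|f\|_2=1$, but do \emph{not} take the harmonic extension of $f$; instead use the product competitor $u(s,x)=\ph_\lambda(s)f(x)$, where $\ph_\lambda$ is the Krein profile of Theorem~\ref{thm:krein}. Then
\formula{
 \form_H(u,u) &= \int_0^\infty \bigl((\ph_\lambda'(s))^2 + \tnorm{\nabla f}_2^2\, A(s)(\ph_\lambda(s))^2\bigr)\,ds
 = \psi(\lambda) + \bigl(\eta-\lambda-\gamma\tscalar{Vf,f}\bigr)\int_0^\infty A(s)(\ph_\lambda(s))^2\,ds ,
}
where $\eta=\tnorm{\nabla f}_2^2+\gamma\tscalar{Vf,f}\le\tilde\lambda_n(\gamma)$ and the identity uses equality in~\eqref{eq:ph:min}. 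Choosing $\gamma=\bigl(\int_0^\infty A(s)(\ph_\lambda(s))^2\,ds\bigr)^{-1}$ makes the potential term absorb exactly, so $\form_{H,V}(u,u)\le\psi(\lambda)$ whenever $\tilde\lambda_n(\gamma)\le\lambda$, and the min-max principle of Theorem~\ref{thm:var:h} gives $\mu_n\le\psi(\lambda)$ with no extra constant. For $\psi(\lambda)=\lambda^{\alpha/2}$ the Bessel integral $\int_0^\infty a(t)(\ph_\lambda(t))^2\,dt=\lambda^{\alpha/2-1}$ identifies $\gamma=\lambda^{1-\alpha/2}$; combining with your (correct) homogeneity scaling and solving $\lambda^{(2-\alpha)/(2+p)}\lambda_n=\lambda$ gives $\lambda=\lambda_n^{(2+p)/(\alpha+p)}$ and hence $\mu_n\le\lambda_n^{(2+p)\alpha/(2\alpha+2p)}$, i.e.\ constant $1$. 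Note that your sketched alternative (harmonic extensions of dilated eigenfunctions plus a one-parameter optimisation) is not what is needed either: the point is that $u=\ph_\lambda\otimes f$ is an admissible, generally non-harmonic trial function whose energy is computable in closed form thanks to~\eqref{eq:ph:min}, which is where the sharp constant comes from.
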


As $p \to \infty$, the above bound approximates the well-known bound $\mu_n \le \lambda_n^{\alpha/2}$ for the eigenvalues of $(-\Delta)^{\alpha/2}$ and $-\Delta$ in a domain, with zero boundary/exterior condition. This was proved (for general complete Bernstein functions $\psi$) by DeBlassie~\cite{bib:d04} and Chen and Song~\cite{bib:cs05}.

An estimate related to that of Theorem~\ref{Thm:est} is proved recently in~\cite{bib:jw18}. Further results on spectral theory of non-local Schrödinger operators can be found, among others, in~\cite{bib:cms90,bib:f18,bib:fls08,bib:kk10,bib:kl15,bib:kl17,bib:kkms10,bib:k09,bib:k12,bib:ly88,bib:lm12}.

\bigskip

The structure of the article corresponds to the above description of main results. We begin with a short section on preliminary results. Then we outline Krein's spectral theory of strings in Section~\ref{sec:krein}; a more in-depth discussion is deferred to Appendix~\ref{app:krein}. In Section~\ref{sec:h} we introduce the harmonic extension techinque and prove Theorems~\ref{Thm:op} and Theorem~\ref{Thm:form}. After that we discuss a number of examples in Section~\ref{sec:ex}. Next three sections discuss applications to non-local Schrödinger operators: the variational principle (Section~\ref{sec:var}), the Courant--Hilbert theorem (Section~\ref{sec:ch}) and estimates of eigenvalues (Section~\ref{sec:eig}). We conclude the paper with a brief discussion of probabilistic aspects of the extension method.

%
%

\section{Preliminaries}

In this section we collect definitions and known results for later use. Throughout the text $d = 1, 2, \ldots$ denotes the dimension. We typically use letters $f, g, h$ for functions defined on $\R^d$, and letters $u, v, w$ for functions on the half-space $\hh = (0, \infty) \times \R^d$. We denote the Lebesgue space of $p$-integrable functions on a domain $D$ by $\leb^p(D)$. By $\fourier f$ we denote the isometric Fourier transform of a function $f$: if $f \in \leb^1(\R^d)$, then
\formula{
 \fourier f(\xi) & = (2 \pi)^{-d/2} \int_{\R^d} e^{-i \xi \cdot x} f(x) dx ,
}
and $\fourier$ extends to a unitary operator on $\leb^2(\R^d)$.

\subsection{Complete Bernstein functions}

Functions $\psi : [0, \infty) \to [0, \infty)$ of the form
\formula{
 \psi(\lambda) & = c_1 + c_2 \lambda + \frac{1}{\pi} \int_{(0, \infty)} \frac{\lambda}{\lambda + s} \, \frac{m(ds)}{s}
}
for some constants $c_1, c_2 \ge 0$ and some nonnegative measure $m$ that satisfies the integrability condition $\int_{(0, \infty)} s^{-1} (1 + s)^{-1} m(ds) < \infty$, are said to be \emph{complete Bernstein functions}. This class has found numerous applications in various areas of mathematics, and it admits several characterisations, of which we mention two:
\begin{enumerate}
\item A function $\psi : [0, \infty) \to [0, \infty)$ is a complete Bernstein function if and only if it extends to a holomorphic function of $\lambda \in \C \setminus (-\infty, 0]$, with $\im \psi(\lambda) \ge 0$ whenever $\im \lambda \ge 0$ (that is, $\psi$ is a Pick function which is nonnegative on $(0, \infty)$).
\item The class of complete Bernstein functions coincides with the class of nonnegative \emph{operator monotone functions}, that is, functions $\psi$ such that $\psi(\op_2) - \psi(\op_1)$ is nonnegative definite whenever $\op_1, \op_2$ are self-adjoint operators such that $\op_2 - \op_1$ is nonnegative definite.
\end{enumerate}
Yet another characterisation, which is crucial for our needs, is given by Krein's correspondence, as stated in Theorem~\ref{thm:krein}. For a detailed discussion of complete Bernstein functions we refer to~\cite{bib:ssv12}.

\subsection{Weak differentiability and ACL property}

As usual, by $\partial_j$ we denote the derivative along $j$-th coordinate, by $\nabla$ the gradient, and by $\Delta$ the Laplace operator. We use the same symbol $\partial_j$ for pointwise and weak derivatives. Recall that a~locally integrable function $f$ defined on a domain $D \sub \R^d$ is weakly differentiable if and only if there exist functions $\partial_j f$, $j = 1, 2, \ldots, d$, such that
\formula{
 \int_D \partial_j f(x) g(x) dx & = -\int_D f(x) \partial_j g(x) dx ,
}
for all $g \in C_c^\infty(D)$ (infinitely smooth, compactly supported functions), where $\partial_j g$ is the usual derivative of $g$.

For brevity, by an absolutely continuous function we will always mean a locally absolutely continuous function. In dimension one, $f$ is weakly differentiable if and only if it is equal almost everywhere to an absolutely continuous function $\tilde{f}$. In this case $\tilde{f}$ is differentiable almost everywhere, $\tilde{f}(y) - \tilde{f}(x)$ it is equal to the integral of $\tilde{f}'$ over $[x, y]$, and the weak derivative of $f$ is equal (almost everywhere) to $\tilde{f}'$.

A similar description is available in higher dimensions, using absolute continuity on lines, abbreviated as~ACL. Namely, a function $f$ has the~ACL property if, for every cardinal direction, $f$ is absolutely continuous on almost every line in that direction. A~well-known theorem asserts that $f$ is weakly differentiable if and only if it is equal almost everywhere to a function $\tilde{f}$ with the~ACL property such that $\nabla \tilde{f}(x)$ is locally integrable in $D$.

%
%

\section{Krein's spectral theory of strings}
\label{sec:krein}

\subsection{Fundamental result}
\label{sec:krein:thm}

The harmonic extension technique is based on Krein's spectral theory of strings. Our starting point is the following theorem, which is essentially due to Krein. References and further discussion can be found in Appendix~\ref{app:krein}.

\begin{theorem}
\label{thm:krein}
Suppose that $A(s)$ is a locally integrable function on $[0, \infty)$ with values in $[0, \infty]$. Then for every $\lambda \ge 0$ there exists a unique non-increasing function $\ph_\lambda(s)$ on $[0, \infty)$ which solves
\formula[eq:ph]{
 \begin{cases}
 \ph_\lambda''(s) = \lambda A(s) \ph_\lambda(s) & \text{for $s > 0$,} \\
 \ph_\lambda(0) = 1 , \\
 \lim_{s \to \infty} \ph_\lambda(s) \ge 0
 \end{cases}
}
(with the second derivative understood in the weak sense). Furthermore, the expression
\formula{
 \psi(\lambda) & = -\ph_\lambda'(0)
}
defines a complete Bernstein function $\psi$, and the correspondence between $A(s)$ and $\psi(\lambda)$ is one-to-one.

More generally, let $A$ be a \emph{Krein's string}: a locally finite nonnegative Borel measure $A(ds)$ on $[0, R)$, where $R \in (0, \infty]$. Then for every $\lambda \ge 0$ there is a unique non-increasing function $\ph_\lambda(s)$ on $[0, R)$ which solves problem~\eqref{eq:ph} (with the second derivative understood in the sense of distributions), with an additional Dirichlet boundary condition $\ph_\lambda(R) = 0$ at $s = R$ imposed in the case when $R$ is finite and $(R - s) A(ds)$ is a finite measure.

Furthermore, any non-zero complete Bernstein function $\psi$ can be obtained in the above manner in a unique way.
\end{theorem}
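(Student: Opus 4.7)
The plan is to split the theorem into its \emph{direct} part (given a Krein string $A$, construct a complete Bernstein function $\psi$) and its \emph{inverse} part (every non-zero complete Bernstein function arises in this way, uniquely). The direct part is essentially an instance of Weyl--Titchmarsh theory for a Sturm--Liouville operator with measure coefficient; the inverse part is Krein's inverse spectral theorem, and it is the main obstacle.

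For the direct part I would first establish existence and uniqueness of $\ph_\lambda$ by a shooting argument. For fixed $\lambda > 0$ let $c_\lambda, s_\lambda$ denote the two solutions of $u'' = \lambda A u$, in the distributional sense on $[0, R)$, with initial data $c_\lambda(0) = 1, c_\lambda'(0^+) = 0$ and $s_\lambda(0) = 0, s_\lambda'(0^+) = 1$. Since $A \ge 0$ both are non-negative, convex and non-decreasing; consequently every non-increasing solution starting at $1$ is of the form $c_\lambda - \kappa s_\lambda$ with $\kappa \ge 0$. I would obtain $\ph_\lambda$ as the monotone decreasing limit, as $R' \to R$, of the Dirichlet solutions $\ph_\lambda^{R'}$ on $[0, R']$ satisfying $\ph_\lambda^{R'}(0) = 1$ and $\ph_\lambda^{R'}(R') = 0$, which corresponds to the Weyl coefficient $\kappa^\ast = \lim_{R' \to R} c_\lambda(R') / s_\lambda(R')$. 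A comparison argument shows that the limit exists, that it gives a non-increasing non-negative solution, and that it is the unique such. The same construction works in the measure-valued case once $\ph_\lambda'$ is interpreted as a right-continuous function of locally bounded variation satisfying $d\ph_\lambda' = \lambda \ph_\lambda A(ds)$; the extra Dirichlet condition $\ph_\lambda(R) = 0$ arises precisely when the right endpoint is regular, which is exactly the condition $(R - s) A(ds)$ finite in the statement.

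For the complete Bernstein property of $\psi(\lambda) := -\ph_\lambda'(0)$ the central computation is Green's identity. Analytic dependence of ODE solutions on the parameter, together with a Weyl-disc nesting argument, gives a holomorphic extension of $\ph_\lambda$ — and hence of $\psi$ — to $\lambda \in \C \setminus (-\infty, 0]$. Multiplying the equation $\ph_\lambda'' = \lambda \ph_\lambda A$ by $\overline{\ph_\lambda}$ and integrating by parts yields
\formula{
 \im \psi(\lambda) & = \im \lambda \cdot \int_{[0, R)} |\ph_\lambda(s)|^2 A(ds) ,
}
once one verifies that the boundary term at $R$ vanishes: this is automatic under a Dirichlet condition at a finite regular endpoint, and it follows from $A$-square-integrability of the Weyl solution in the limit-point case. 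Combined with the obvious $\psi \ge 0$ on $[0, \infty)$ (because $\ph_\lambda$ is non-increasing with $\ph_\lambda(0) = 1$), this establishes that $\psi$ is a complete Bernstein function.

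The inverse part carries the real weight of the theorem, and I expect it to be the main difficulty. Injectivity of $A \mapsto \psi$ is essentially a Borg--Marchenko-type uniqueness statement: $\psi$ determines the spectral measure of the associated Sturm--Liouville operator, which in turn determines $A$ via Krein's classical uniqueness theorem for strings. Surjectivity onto the non-zero complete Bernstein functions is Krein's celebrated inverse theorem: from the Stieltjes representation of $\psi$ one reads off a candidate spectral measure, and Krein's construction (via a continued-fraction expansion, or equivalently a recursion on principal solutions) produces a string $A$ realising it. Both statements are substantial and technical, and I would defer the full argument — together with precise references to Krein's original work — to Appendix~\ref{app:krein}.
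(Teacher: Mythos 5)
Your outline is sound, and it is in fact somewhat more ambitious than what the paper itself does: the paper does not prove Theorem~\ref{thm:krein}, but presents it as ``essentially due to Krein'', quoting the Kotani--Watchanabe form of the correspondence (Theorem~\ref{thm:krein:kw} in Appendix~\ref{app:krein}, cited from \cite{bib:kw82}) and then only verifying the elementary properties of $\ph_\lambda$. Concretely, the appendix takes the two Cauchy solutions $f_N, f_D$ (your $c_\lambda, s_\lambda$), cites without proof that $\lim_{s \to R^-} f_D(s)/f_N(s) = \int_0^R (f_N(s))^{-2} ds = 1/\psi(\lambda)$ with $\psi$ a complete Bernstein function and that $A \leftrightarrow \psi$ is a bijection, and then builds $\ph_\lambda$ as the Weyl-type combination of $f_N$ and $f_D$, checking nonnegativity, convexity, monotonicity, and uniqueness via the observation that $f_D(s) \ge s$, so that every solution not proportional to $\ph_\lambda$ diverges --- which is exactly your Dirichlet-truncation/comparison argument. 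What you add beyond the paper is a sketched proof of the direct part: holomorphy of $\psi$ in $\lambda$ via Weyl-disc nesting and Green's identity giving $\im \psi(\lambda) = \im \lambda \cdot \int_{[0,R)} |\ph_\lambda(s)|^2 A(ds)$, hence the Pick property, which together with $\psi \ge 0$ on $[0,\infty)$ matches characterisation (1) of complete Bernstein functions in the paper's preliminaries; the paper simply imports this from \cite{bib:kw82}. Your two flagged caveats (the boundary term at $R$ in the limit-point case, and the fact that for complex $\lambda$ the monotone-limit construction must be replaced by the nested-disc one) are indeed exactly where the care is needed. For the inverse part (injectivity and surjectivity onto non-zero complete Bernstein functions) you, like the paper, defer to Krein's inverse spectral theory; that is consistent with the paper's treatment, but you should state explicitly that this part is cited rather than proved, as it carries the real weight of the theorem (references: \cite{bib:kw82}, Chapter~5 of \cite{bib:dm76}, Chapter~15 of \cite{bib:ssv12}).
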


We remark that the Neumann boundary condition $\ph_\lambda'(R) = 0$ would correspond to extending the string to $[0, \infty)$ in such a way that $A = 0$ on $[R, \infty)$. If $(R - s) A(ds)$ is an infinite measure on a finite interval $[0, R)$, then the Dirichlet condition at $s = R$ is automatically satisfied.

We also note that when $R = \infty$ or when $(R - s) A(ds)$ is an infinite measure on a finite interval $[0, R)$, then any solution of the system~\eqref{eq:ph} which is not a multiple of $\ph_\lambda$ necessarily diverges to $\infty$ or $-\infty$ as $s \to \infty$.

In order not to overwhelm the reader with technical details, throughout the article we restrict our attention to the \emph{regular} case, that is, we will assume that $A$ is a nonnegative locally integrable function on $[0, \infty)$. The extension to the general case is discussed in Appendix~\ref{app:krein}, where also the existence and properties of $\ph_\lambda$ are further discussed. More information about Krein's spectral theory of strings can be found, for example, in Chapter~5 of~\cite{bib:dm76}, in~\cite{bib:kw82} and in Chapter~15 of~\cite{bib:ssv12}.

Note that $\ph_\lambda$ is non-increasing and convex, so that in particular
\formula[eq:ph:est]{
 0 \le -\ph_\lambda'(s) & \le \frac{\ph_\lambda(0) - \ph_\lambda(s)}{s} \le \frac{1}{s}
}
for all $s > 0$ and $\lambda \ge 0$. Although this estimate is very rough, it is completely sufficient for our needs.

We will need the following rather standard property: if $f$ is an absolutely continuous function satisfying $f(0) = 1$, then
\formula[eq:ph:min]{
 \int_0^\infty \bigl((f'(s))^2 + \lambda A(s) (f(s))^2\bigl) ds & \ge \psi(\lambda) ,
}
and equality holds if and only if $f = \ph_\lambda$. For completeness, we provide a short proof.

Equality for $f = \ph_\lambda$ follows by integration by parts: we have $\ph_\lambda(0) \ph_\lambda'(0) = -\psi(\lambda)$, and $\ph_\lambda(s) \ph_\lambda'(s)$ converges to zero as $s \to \infty$, so that
\formula{
 \int_0^\infty (\ph_\lambda'(s))^2 ds & = \psi(\lambda) - \int_0^\infty \ph_\lambda(s) \ph_\lambda''(s) ds .
}
Since $\ph_\lambda''(s) = \lambda A(s) \ph_\lambda(s)$, equality in~\eqref{eq:ph:min} for $f = \ph_\lambda$ follows.

Suppose now that $f$ is absolutely continuous and $f(0) = 1$. Estimate~\eqref{eq:ph:min} holds trivially if $f'$ is not square integrable. Otherwise, by Schwartz inequality, for $S > 0$ we have
\formula[eq:ph:parts]{
 |f(S) - f(0)|^2 & \le \expr{\int_0^S |f'(s)| ds}^2 \le S \int_0^S |f'(s)|^2 ds \le S \int_0^\infty |f'(s)|^2 ds .
}
Therefore, $|f(S)| \le 1 + \sqrt{S} \int_0^\infty |f'(s)|^2 ds$ for all $S > 0$. In particular, by~\eqref{eq:ph:est}, $f(s) \ph_\lambda'(s)$ converges to zero as $s \to \infty$. Denote $g(s) = f(s) - \ph_\lambda(s)$. Then $g$ is an absolutely continuous function, $g(0) = 0$ and $g(s) \ph_\lambda'(s)$ converges to zero as $s \to \infty$. Therefore, integration by parts gives
\formula{
 \int_0^\infty g'(s) \ph_\lambda'(s) ds & = -\int_0^\infty g(s) \ph_\lambda''(s) ds = -\int_0^\infty \lambda A(s) g(s) \ph_\lambda(s) ds .
}
Clearly,
\formula{
 \int_0^\infty \bigl((f'(s))^2 + \lambda A(s) (f(s))^2\bigl) ds & = \int_0^\infty \bigl((\ph_\lambda'(s))^2 + \lambda A(s) (\ph_\lambda(s))^2\bigl) ds \\
 & \hspace*{-10em} + 2 \int_0^\infty \bigl(\ph_\lambda'(s) g'(s) + \lambda A(s) \ph_\lambda(s) g(s)\bigl) ds + \int_0^\infty \bigl((g'(s))^2 + \lambda A(s) (g(s))^2\bigl) ds .
}
We already know that the first integral in the right-hand side is equal to $\psi(\lambda)$, while the middle one is zero. The last one is nonnegative, and it is equal to zero if and only if $g'(s) = 0$ for almost all $s$, which implies that $g$ is identically zero. The proof of~\eqref{eq:ph:min} is complete.

\subsection{Change of variable}
\label{sec:krein:change}

Suppose that $a(t)$ is a Borel function on $(0, \infty)$, with values in $[0, \infty]$, such that $\sigma(T) = \int_0^T (a(t))^{-1} dt$ is strictly increasing and finite for all $T > 0$, and such that $\sigma(t)$ diverges to infinity as $t \to \infty$. Then $A(\sigma(t)) = (a(t))^2$ defines a Borel function $A(s)$ on $(0, \infty)$.

Conversely, if a positive, Borel function $A(s)$ is given, then the corresponding $\sigma(t)$ and $a(t) = 1 / \sigma'(t)$ can be found by solving the ordinary differential equation $A(\sigma(t)) = (\sigma'(t))^{-2}$, that is, they are described by the identity
\formula{
 \int_0^{\sigma(t)} \sqrt{A(s)} \, ds & = t
}
for $t \in (0, \infty)$. In order that $\sigma(t)$ is indeed well-defined and continuous, we need to assume that the integral of $(A(s))^{1/2}$ is strictly increasing, finite and divergent to infinity as $s \to \infty$; for absolute continuity of $\sigma$ (required for the definition of $a(t) = (\sigma'(t))^{-1}$), additional conditions on $A(s)$ need to be imposed.

After a change of variable $s = \sigma(t)$, we find that
\formula{
 \int A(s) ds & = \int A(\sigma(t)) \sigma'(t) dt = \int a(t) dt .
}
Therefore, local integrability of $a(t)$ on $[0, \infty)$ is equivalent to local integrability of $A(s)$ on $[0, \infty)$.

Suppose that $\tilde{f}(t) = f(\sigma(t))$. Note that $\sigma(t)$ is absolutely continuous and monotone. It follows that if $f$ is absolutely continuous, then so is $\tilde{f}$. The converse is also true, since the inverse function $\sigma^{-1}(s)$ is absolutely continuous (as a consequence of the fact that $\sigma'(t) = (a(t))^{-1}$ is positive almost everywhere due to local integrability of $a(t)$). The derivatives (either pointwise or weak) of $f$ and $\tilde{f}$ are related by the identity
\formula{
 a(t) \tilde{f}'(t) & = a(t) f'(\sigma(t)) \sigma'(t) = f'(\sigma(t))
}
for almost all $t$.

If $f'$ is absolutely continuous, then it follows that $a(t) \tilde{f}'(t)$ is absolutely continuous, and furthermore
\formula[eq:op:sub]{
\begin{aligned}
 (a(t))^{-1} (a(t) \tilde{f}'(t))' & = (a(t))^{-1} f''(\sigma(t)) \sigma'(t) \\
 & = (a(t))^{-2} f''(\sigma(t)) = (A(\sigma(t)))^{-1} f''(\sigma(t)) .
\end{aligned}
}
In other words, the operator $(a(t))^{-1} \partial_t (a(t) \partial_t)$ is equivalent to the operator $(A(s))^{-1} \partial_s^2$ after a change of variable $s = \sigma(t)$. This explains the identification of Dirichlet-to-Neumann operators given by~\eqref{eq:oph:s} and~\eqref{eq:op:s} on one hand, and by~\eqref{eq:oph:t} and~\eqref{eq:op:t} on the other.

We will need a similar correspondence in terms of quadratic forms. If $f(\sigma(t)) = \tilde{f}(t)$, we clearly have
\formula[eq:inner:sub]{
 \int_0^\infty a(t) (\tilde{f}(t))^2 dt & = \int_0^\infty (a(t))^2 (f(\sigma(t)))^2 \sigma'(t) dt = \int_0^\infty A(s) (f(s))^2 ds .
}
Furthermore, if $f$ or $\tilde{f}$ is weakly differentiable, then
\formula[eq:form:sub]{
 \int_0^\infty a(t) (\tilde{f}'(t))^2 dt & = \int_0^\infty (a(t))^{-1} f'(\sigma(t)))^2 dt = \int_0^\infty (f'(s))^2 ds .
}

%
%

\section{Extension technique}
\label{sec:h}

\subsection{Quadratic form in halfspace}

Throughout the entire section we assume that $A(s)$ is a nonnegative, locally integrable function on $[0, \infty)$. Recall that $\hh = (0, \infty) \times \R^d$.

\begin{definition}
\label{def:formh:s}
For a function $u$ on $\hh$, we define
\formula[eq:formh:s]{
 \form_H(u, u) & = \int_0^\infty \int_{\R^d} \bigl((\partial_s u(s, x))^2 + A(s) |\nabla_x u(s, x)|^2\bigr) dx ds .
}
The domain of this form, denoted $\dom(\form_H)$, is the set of all locally integrable functions $u$ on $\hh$ which satisfy the following conditions: $u(s, x)$ is weakly differentiable with respect to $s$, $(A(s))^{1/2} u(s, x)$ is weakly differentiable with respect to $x$, the integral in~\eqref{eq:formh:s} is finite, and
\formula[eq:formh:s:dom]{
 \int_{s_0}^{s_1} \int_{\R^d} A(s) |u(s, x)|^2 dx ds < \infty
}
whenever $0 < s_0 < s_1$.
\end{definition}

Note that if $A(s)$ is locally bounded below by a positive constant, then we can simply say that $u(s, x)$ is weakly differentiable with respect to both $s$ and $x$. However, for general $A$, the function $(A(s))^{-1/2}$ may fail to be integrable, and therefore weak differentiability of $(A(s))^{1/2} u(s, x)$ with respect to $x$ need not imply weak differentiability of $u(s, x)$ with respect to $x$. In this case, the second term under the integral in~\eqref{eq:formh:s} should in fact be understood as $|\nabla_x ((A(s))^{1/2} u(s, x))|^2$; for simplicity, however, we abuse the notation and we use the less formal expression $A(s) |\nabla_x u(s, x)|^2$.

Clearly, $\form_H$ defined above is the quadratic form of the operator
\formula{
 \op_H u(s, x) & = \partial_s^2 u(s, x) + A(s) \Delta_x u(s, x) .
}
As this operator will only be used in the weak sense, we do not need to specify the domain of $\op_H$.

Let $\fourier_x u(s, \cdot)$ denote the Fourier transform of $u(s, \cdot)$. By Plancherel's theorem, one easily finds that $u \in \dom(\form_H)$ if and only if $\fourier_x u(s, \xi)$ is weakly differentiable with respect to $s$,
\formula[eq:formh:s:f:dom]{
 \int_{s_0}^{s_1} \int_{\R^d} A(s) |\fourier_x u(s, \xi)|^2 dx ds < \infty
}
whenever $0 < s_0 < s_1$, and the integral in the right-hand side of the identity
\formula[eq:formh:s:f]{
 \form_H(u, u) & = \int_0^\infty \int_{\R^d} \bigl(|\partial_s \fourier_x u(s, \xi)|^2 + A(s) |\xi|^2 |\fourier_x u(s, \xi)|^2\bigr) d\xi ds
}
is finite; the above equality expresses $\form_H$ in terms of the Fourier transform.

If $u \in \dom(\form_H)$, then $\partial_s u(s, \cdot)$, as an $\leb^2(\R^d)$-valued function, is locally integrable on $[0, \infty)$ (in the sense of Bochner's integral). Indeed, by Schwarz inequality,
\formula[eq:norm:est]{
 \left(\int_{s_0}^{s_1} \|\partial_s u(s, \cdot)\|_2 ds\right)^2 & \le (s_1 - s_0) \int_{s_0}^{s_1} \|\partial_s u(s, \cdot)\|_2^2 ds \le (s_1 - s_0) \form_H(u, u)
}
whenever $0 \le s_0 < s_1$ (this is an analogue of~\eqref{eq:ph:parts}). Furthermore, one easily sees that for almost all $S$, the Bochner integral
\formula{
 \int_0^S \partial_s u(s, \cdot) ds
}
is equal to $u_0(\cdot) + u(S, \cdot)$ for some function (`constant') $u_0 \in \leb^2(\R^d)$. If we denote $u(0, \cdot) = u_0(\cdot)$, then, after modification on a set of zero Lebesgue measure, we may assume that in fact 
\formula[eq:u:int]{
 u(S, \cdot) & = u(0, \cdot) + \int_0^S \partial_s u(s, \cdot) ds ,
}
for all $S \in [0, \infty)$. Observe that~\eqref{eq:u:int} and~\eqref{eq:norm:est} imply that
\formula[eq:u:est]{
 \|u(s, \cdot)\|_2 & \le \|u(0, \cdot)\|_2 + (s \form_H(u, u))^{1/2} .
}
From now on we will always assume that~\eqref{eq:u:int} holds for all $S \in [0, \infty)$ whenever we consider $u \in \dom(\form_H)$.

\subsection{Boundary form}

The trace $\form(f, f)$ of the quadratic form $\form_H(u, u)$ on the boundary of $\hh$ is defined as the minimal value of $\form_H(u, u)$ among all $u \in \dom(\form_H)$ which satisfy the boundary condition $u(0, x) = f(x)$. It is not very difficult to see that the minimisers $u$ are harmonic functions with respect to $\op_H$, and these harmonic functions can be described in terms of the Fourier transform and the functions $\ph_\lambda$ introduced in Section~\ref{sec:krein}. A short calculation reveals that in fact
\formula[eq:form]{
 \form(f, f) & = \int_{\R^d} \psi(|\xi|^2) |f(\xi)|^2 d\xi ,
}
where $\psi(\lambda) = -\ph_\lambda'(0)$ is a complete Bernstein function described by Theorem~\ref{thm:krein}.

We take~\eqref{eq:form} as the definition, with the domain $\dom(\form)$ defined to be the space of all $f \in \leb^2(\R^d)$ for which the integral in the definition~\eqref{eq:form} of $\form(f, f)$ is finite. With this definition, we will prove that indeed $\form(f, f)$ is the trace of $\form_H(u, u)$ on the boundary. We also denote by $\op = \psi(-\Delta)$ the Fourier multiplier with symbol $\psi(|\xi|^2)$, that is, as in~\eqref{eq:op},
\formula{
 \fourier (\op f)(\xi) & = \psi(|\xi|^2) \fourier f(\xi) ,
}
with domain $\dom(\op)$ consisting of those $f \in \leb^2(\R^d)$ for which $\psi(|\xi|^2) \fourier f(\xi)$ is square integrable.

Let $\ph(\lambda, s) = \ph_\lambda(s)$ be the function defined in Theorem~\ref{thm:krein}. We introduce the harmonic extension operator.

\begin{definition}
\label{def:ext}
For $f \in \leb^2(\R^d)$ we define its harmonic extension $u = \ext(f)$ to $\hh$ by means of Fourier transform,
\formula{
 \fourier_x u(s, \xi) & = \ph(|\xi|^2, s) \fourier f(\xi) ,
}
where $\fourier_x u(s, \cdot)$ is the Fourier transform of $u(s, \cdot)$.
\end{definition}

Since $\ph(\lambda, s)$ is bounded by $1$, the harmonic extension is well-defined, and we have $\|u(s, \cdot)\|_2 \le \|f\|_2$. We begin by observing that the Dirichlet-to-Neumann operator applied to $\ext(f)$ coincides with $\op f$.

\begin{theorem}
\label{thm:oph:s}
Let $f \in \leb^2(\R^d)$ and $u = \ext(f)$. Then $f \in \dom(\op)$ if and only if the limit in the definition of $\partial_s u(0, \cdot)$ exists in $\leb^2(\R^d)$, and in this case
\formula{
 \op f & = -\partial_s u(0, \cdot) = -\lim_{s \to 0^+} \partial_s u(s, \cdot) .
}
\end{theorem}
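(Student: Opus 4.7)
The plan is to transfer everything to the Fourier side via Plancherel's theorem and then reduce the statement to a straightforward dominated convergence argument in $\leb^2(\R^d)$. By Definition~\ref{def:ext}, $\fourier_x u(s, \xi) = \ph(|\xi|^2, s) \fourier f(\xi)$, so the Fourier transform of the difference quotient, or equivalently of $\partial_s u(s, \cdot)$, is $\partial_s \ph(|\xi|^2, s) \fourier f(\xi)$. Since $A$ is assumed locally integrable, in Theorem~\ref{thm:krein} the equation $\ph_\lambda'' = \lambda A \ph_\lambda$ holds with $\ph_\lambda''$ locally integrable, hence $\ph_\lambda'$ is absolutely continuous on $[0, \infty)$ and converges pointwise to $\ph_\lambda'(0) = -\psi(\lambda)$ as $s \to 0^+$. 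In particular, for each $\xi \in \R^d$, $\partial_s \ph(|\xi|^2, s) \fourier f(\xi) \to -\psi(|\xi|^2) \fourier f(\xi)$.

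Next I would record the elementary uniform bound on $\partial_s \ph(\lambda, s)$: because $\ph_\lambda$ is non-increasing and convex on $[0, \infty)$, the derivative $\ph_\lambda'(s)$ is non-positive and non-decreasing, so
\formula{
 0 \le -\partial_s \ph(\lambda, s) \le -\ph_\lambda'(0) = \psi(\lambda)
}
for every $s \ge 0$ and every $\lambda \ge 0$. Consequently, $|\partial_s \ph(|\xi|^2, s) \fourier f(\xi)| \le \psi(|\xi|^2) |\fourier f(\xi)|$ pointwise in $\xi$, and this dominant belongs to $\leb^2(\R^d)$ exactly when $f \in \dom(\op)$.

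The two directions now follow. If $f \in \dom(\op)$, then the dominated convergence theorem, applied with the $\leb^2$-dominant $\psi(|\cdot|^2) |\fourier f|$, yields
\formula{
 \partial_s \ph(|\cdot|^2, s) \fourier f \longrightarrow -\psi(|\cdot|^2) \fourier f \qquad \text{in } \leb^2(\R^d) \text{ as } s \to 0^+,
}
and Plancherel's identity transfers this to $\partial_s u(s, \cdot) \to -\op f$ in $\leb^2(\R^d)$. Conversely, if $\partial_s u(s, \cdot)$ has some $\leb^2$-limit $g$ as $s \to 0^+$, then by Plancherel $\partial_s \ph(|\xi|^2, s) \fourier f(\xi) \to \fourier g(\xi)$ in $\leb^2$, so along a subsequence the convergence is also pointwise almost everywhere; combined with the pointwise limit computed above, this forces $\fourier g(\xi) = -\psi(|\xi|^2) \fourier f(\xi)$, so $\psi(|\cdot|^2) \fourier f \in \leb^2(\R^d)$ and $f \in \dom(\op)$ with $\op f = -g$.

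The only delicate point is the regularity of $\ph_\lambda'$ near $s = 0$ needed to justify the pointwise limit $\partial_s \ph(\lambda, s) \to -\psi(\lambda)$; this is where local integrability of $A$ is used, and for more singular strings one would instead appeal to the general framework in Appendix~\ref{app:krein}. Everything else is bookkeeping with Plancherel and dominated convergence.
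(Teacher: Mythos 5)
Your argument is correct and is essentially the paper's own proof: both pass to the Fourier side, use the bound $0 \le -\partial_s \ph(\lambda, s) \le \psi(\lambda)$ coming from convexity and monotonicity of $\ph_\lambda$ together with $-\ph_\lambda'(0) = \psi(\lambda)$, and settle the direction $f \in \dom(\op)$ by dominated convergence. The only cosmetic difference is in the converse, where the paper notes that $-\partial_s\ph(\lambda,s)$ increases to $\psi(\lambda)$ as $s \to 0^+$ and applies monotone convergence to the norms, while you extract an almost everywhere convergent subsequence to identify the limit --- both are one-line arguments, so no substantive gap.
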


\begin{proof}
Recall that $-\partial_s \ph(\lambda, s)$ is decreasing and equal to $\psi(\lambda)$ for $s = 0$. Thus, if $f \in \dom(\op)$, then the desired result follows by dominated convergence. On the other hand, if the limit in the definition of $\partial_s u(0, \cdot)$ exists, then $\psi(\xi^2) f(\xi)$ is square integrable by monotone convergence, and so $f \in \dom(\op)$.
\end{proof}

Our next two results state that $\form(f, f) = \form_H(u, u)$ if $u = \ext(f)$, and that $\ext(f)$ indeed minimises $\form(u, u)$ among all $u \in \dom(\form_H)$ such that $u(0, x) = f(x)$.

\begin{theorem}
\label{thm:forms:s}
Let $f \in \leb^2(\R^d)$ and $u = \ext(f)$. Then $u \in \dom(\form_H)$ if and only if $f \in \dom(\form)$, and in this case
\formula{
 \form(f, f) & = \form_H(u, u) .
}
\end{theorem}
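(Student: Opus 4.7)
The plan is to reduce the identity to a one-dimensional problem via the Fourier transform in the $x$-variable, and then invoke the minimisation equality obtained just after~\eqref{eq:ph:min}. Specifically, for every $\lambda \ge 0$, the equality case $f = \ph_\lambda$ established in Section~\ref{sec:krein:thm} reads
\formula{
 \int_0^\infty \bigl((\partial_s \ph(\lambda, s))^2 + \lambda A(s) (\ph(\lambda, s))^2\bigr) ds & = \psi(\lambda) .
}
This identity is precisely what one needs, fibre by fibre in~$\xi$, to convert the half-space form $\form_H$ into the boundary form $\form$.

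With $u = \ext(f)$, so that $\fourier_x u(s, \xi) = \ph(|\xi|^2, s) \fourier f(\xi)$, I would substitute this into the Fourier-side expression~\eqref{eq:formh:s:f} for $\form_H(u,u)$. For each fixed $\xi$, the map $s \mapsto \ph(|\xi|^2, s)$ is non-increasing and convex, hence absolutely continuous, so $\partial_s \fourier_x u(s, \xi) = \partial_s \ph(|\xi|^2, s) \fourier f(\xi)$. The integrand is then the nonnegative product
\formula{
 |\fourier f(\xi)|^2 \bigl[(\partial_s \ph(|\xi|^2, s))^2 + |\xi|^2 A(s) (\ph(|\xi|^2, s))^2\bigr] ,
}
and Tonelli's theorem together with the displayed identity (applied with $\lambda = |\xi|^2$) yields
\formula{
 \form_H(u, u) & = \int_{\R^d} \psi(|\xi|^2) |\fourier f(\xi)|^2 d\xi = \form(f, f) ,
}
as desired, provided the manipulation is legitimate, i.e.\ provided $u \in \dom(\form_H)$.

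For the equivalence $u \in \dom(\form_H) \iff f \in \dom(\form)$, the above identity already gives the equality of the two (possibly infinite) quantities once the membership conditions are verified. Assume $f \in \dom(\form)$. The local integrability condition~\eqref{eq:formh:s:dom} is almost immediate, since $\ph \le 1$ gives
\formula{
 \int_{s_0}^{s_1} \! \int_{\R^d} A(s) |u(s, x)|^2 dx ds & \le \|f\|_2^2 \int_{s_0}^{s_1} A(s) ds < \infty
}
by local integrability of $A$. Weak differentiability of $u$ in $s$ follows from the Fourier representation, since $\ph(|\xi|^2, \cdot)$ is absolutely continuous and $\partial_s \fourier_x u(s, \xi)$, as just identified, is square integrable on $(0, \infty) \times \R^d$ by the computation above. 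Weak differentiability of $(A(s))^{1/2} u$ in $x$ is checked similarly: the finiteness of $\form_H(u,u)$ translates, via Plancherel, into square integrability of $|\xi| (A(s))^{1/2} \ph(|\xi|^2, s) \fourier f(\xi)$ for almost every $s$, which by multiplier arguments is exactly what is needed. Conversely, if $u = \ext(f) \in \dom(\form_H)$, the same Fourier computation gives $\form(f, f) = \form_H(u, u) < \infty$, so $f \in \dom(\form)$.

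The main obstacle is the bookkeeping around weak differentiability in the presence of possible degeneracies of $A(s)$: as noted following Definition~\ref{def:formh:s}, when $(A(s))^{-1/2}$ is not locally integrable, weak differentiability of $(A(s))^{1/2} u$ in $x$ is strictly weaker than weak differentiability of $u$ itself. The Fourier-side characterisation bypasses this issue cleanly because it encodes $\nabla_x u$ by the multiplier $i \xi$, which is unaffected by degeneracies in $A$; one only needs to keep track of which quantities live in $\leb^2$ for almost every slice $s$ and translate back to the ACL formulation of Section~2.2. The remainder of the argument is essentially Tonelli combined with the one-dimensional equality from Section~\ref{sec:krein:thm}.
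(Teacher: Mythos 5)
Your proposal is correct and follows essentially the same route as the paper: pass to the Fourier side via $\fourier_x u(s,\xi) = \ph(|\xi|^2,s)\fourier f(\xi)$, apply the equality case of~\eqref{eq:ph:min} fibrewise in $\xi$ together with Tonelli/Fubini to identify the double integral with $\int_{\R^d}\psi(|\xi|^2)|\fourier f(\xi)|^2\,d\xi$, and handle the domain membership through the Plancherel characterisation of $\dom(\form_H)$ plus the bound $\|u(s,\cdot)\|_2\le\|f\|_2$ for condition~\eqref{eq:formh:s:dom}. Your bookkeeping of the weak derivatives is slightly more explicit than the paper's, but it is the same argument.
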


\begin{proof}
Since $\|u(s, \cdot)\|_2 \le \|f\|_2$ and since $A(s)$ is locally integrable,
\formula{
 \int_{s_0}^{s_1} \int_{\R^d} A(s) |u(s, x)|^2 dx ds < \infty
}
whenever $0 < s_0 < s_1$. By equality in~\eqref{eq:ph:min} and Fubini,
\formula[eq:forms:fourier]{
\begin{aligned}
 & \int_0^\infty \int_{\R^d} \bigl(|\partial_s \fourier_x u(s, \xi)|^2 + A(s) |\xi|^2 |\fourier_x u(s, \xi)|^2\bigr) d\xi ds \\
 & \qquad = \int_0^\infty \int_{\R^d} \bigl((\partial_s \ph(|\xi|^2, s))^2 + A(s) |\xi|^2 (\ph(|\xi|^2, s))^2\bigr) |\fourier f(\xi)|^2 d\xi ds \\
 & \qquad = \int_{\R^d} \psi(|\xi|^2) |\fourier f(\xi)|^2 d\xi .
\end{aligned}
}
If $u \in \dom(\form_H)$, then the left-hand side is equal to $\form_H(u, u)$ (by~\eqref{eq:formh:s:f}), and therefore $f \in \dom(\form)$ and $\form(f, f) = \form_H(u, u)$. Conversely, if $f \in \dom(\form)$, then the right-hand side of the equality~\eqref{eq:forms:fourier} is finite. Therefore, $\fourier_x u(s, \xi)$ is weakly differentiable with respect to~$s$, and the expression in~\eqref{eq:formh:s:f} is finite. We conclude that $u \in \dom(\form_H)$, as desired.
\end{proof}

\begin{theorem}
\label{thm:min}
Let $v \in \dom(\form_H)$, $f(x) = v(0, x)$ and $u = \ext(f)$. Then $f \in \dom(\op)$ and
\formula{
 \form_H(v, v) & \ge \form_H(u, u) = \form(f, f) .
}
Moreover, the space $\dom(\form_H)$ is a direct sum of $\dom_0$ and $\dom_\harm$, where
\formula{
 \dom_0 & = \{ u \in \dom(\form_H) : u(0, x) = 0 \text{ for almost all } x \in \R^d \} , \\
 \dom_\harm & = \{ \ext(f) : f \in \dom(\form) \}
}
are orthogonal to each other with respect to $\form_H$. 
\end{theorem}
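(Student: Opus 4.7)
The plan is to establish the variational inequality by a fiberwise argument on the Fourier side, leveraging the one-dimensional minimization principle~\eqref{eq:ph:min}, and then to derive the direct sum decomposition from the resulting minimality characterization of $\ext(f)$.

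For the first assertion, I would rewrite $\form_H(v,v)$ via~\eqref{eq:formh:s:f}. Since the integrand is nonnegative and integrable, Fubini shows that for almost every $\xi \in \R^d$ the complex-valued function $h_\xi(s) = \fourier_x v(s,\xi)$ is weakly differentiable on $[0,\infty)$ with both $|h_\xi'(s)|^2$ and $|\xi|^2 A(s) |h_\xi(s)|^2$ integrable in $s$, and $h_\xi(0) = \fourier f(\xi)$ (the latter identity obtained by applying the Fourier transform in $x$ to~\eqref{eq:u:int}). Applying~\eqref{eq:ph:min} at $\lambda = |\xi|^2$ separately to the real and imaginary parts of $h_\xi$ (after normalization by their values at $s=0$ when nonzero, and trivially otherwise) yields the fiberwise bound
\[
 \int_0^\infty \bigl(|h_\xi'(s)|^2 + |\xi|^2 A(s) |h_\xi(s)|^2\bigr) ds \ge \psi(|\xi|^2) |\fourier f(\xi)|^2 .
\]
Integrating in $\xi$ gives $\form_H(v,v) \ge \form(f,f)$, so in particular $f \in \dom(\form)$, while the equality $\form_H(u,u) = \form(f,f)$ is precisely Theorem~\ref{thm:forms:s}.

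For the decomposition, set $w = v - u$; then $w \in \dom(\form_H)$ (since $u \in \dom(\form_H)$ by Theorem~\ref{thm:forms:s}) and $w(0, \cdot) = 0$, so $w \in \dom_0$, giving the splitting $v = u + w$. For orthogonality, let $u_1 = \ext(f_1) \in \dom_\harm$ and $w_1 \in \dom_0$; then $u_1 + t w_1 \in \dom(\form_H)$ with boundary trace $f_1$ for every $t \in \R$. By the minimality just established, the quadratic polynomial $t \mapsto \form_H(u_1 + t w_1, u_1 + t w_1)$ attains its minimum at $t = 0$, so its linear term vanishes, yielding $\form_H(u_1, w_1) = 0$. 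Uniqueness of the decomposition is then immediate: any element of $\dom_0 \cap \dom_\harm$ has the form $\ext(g)$ with boundary trace $g = 0$, hence equals $\ext(0) = 0$.

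The main delicate point I anticipate is purely technical, namely justifying the Fubini step in the first paragraph: transferring weak differentiability in $s$ of the $\leb^2(\R^d)$-valued function $s \mapsto v(s,\cdot)$ into the weak differentiability on the real line of $s \mapsto \fourier_x v(s,\xi)$ for almost every $\xi$, together with the identification of the boundary value $h_\xi(0)$. This is essentially Plancherel and Bochner-integral bookkeeping applied to~\eqref{eq:u:int} combined with the ACL characterization recalled in the Preliminaries; the same argument was implicit in the proof of Theorem~\ref{thm:forms:s} and can be invoked here. Everything else reduces to the scalar inequality~\eqref{eq:ph:min} and the standard convex-minimization perturbation argument.
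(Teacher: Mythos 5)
Your proposal is correct and follows essentially the same route as the paper: pass to the Fourier side via \eqref{eq:formh:s:f}, apply the one-dimensional minimization \eqref{eq:ph:min} fiberwise in $\xi$ (the paper normalizes by the complex value $w(0,\xi)$ and takes real parts, while you treat real and imaginary parts separately — an equivalent device), identify the fiberwise boundary value with $\fourier f(\xi)$ using \eqref{eq:u:int} and an ACL/Plancherel argument as the paper does, invoke Theorem~\ref{thm:forms:s} for the equality $\form_H(u,u)=\form(f,f)$, and obtain orthogonality from the same quadratic perturbation $t\mapsto\form_H(u_1+t w_1,u_1+t w_1)$ minimized at $t=0$. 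The technical point you flag (transferring weak $s$-differentiability and the trace to almost every fiber) is exactly the step the paper handles with the absolutely continuous modification $w(s,\xi)$ and an $\leb^2$-subsequence argument, so your outline is complete.
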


\begin{proof}
Let $v \in \dom(\form_H)$ and $f(x) = v(0, x)$. By~\eqref{eq:formh:s:f},
\formula{
 \form_H(v, v) & = \int_0^\infty \int_{\R^d} \bigl(|\partial_s \fourier_x v(s, \xi)|^2 + A(s) |\xi|^2 |\fourier_x v(s, \xi)|^2\bigr) d\xi ds
}
By the ACL characterisation, after a modification on the set of zero Lebesgue measure, for almost all $\xi \in \R^d$, the function $\fourier_x v(\cdot, \xi)$ is absolutely continuous on $[0, \infty)$, and the pointwise and weak definitions of $\partial_s \fourier_x v(s, \xi)$ coincide for almost all $s$. Denote this modification by $w(s, \xi)$. For those $\xi$ for which $w(\cdot, \xi)$ is absolutely continuous, $w(0, \xi) \ne 0$ and $w(\cdot, \xi)$ is square integrable on $(0, \infty)$, we have, by~\eqref{eq:ph:min},
\formula{
 \int_0^\infty \bigl(|\partial_s w(s, \xi)|^2 + A(s) |\xi|^2 |w(s, \xi)|^2\bigr) ds & \ge \psi(|\xi|^2) |w(0, \xi)|^2
}
(we applied~\eqref{eq:ph:min} to $f(s) = \re (w(s, \xi) / w(0, \xi))$ and $\lambda = |\xi|^2$). The above inequality is also trivially true when $w(0, \xi) = 0$. These two cases cover almost all $\xi \in \R^d$. Therefore, by Fubini,
\formula{
 \form_H(v, v) & \ge \int_{\R^d} \psi(|\xi|^2) |w(0, \xi)|^2 d\xi .
}
Observe that $w(s, \xi)$ converges to $w(0, \xi)$ as $s \to 0^+$ for almost all $\xi \in \R^d$. On the other hand, by~\eqref{eq:u:est} we know that $v(s, \cdot)$ converges to $f$ in $\leb^2(\R^d)$. However, $w(s, \cdot) = \fourier_x v(s, \cdot)$ for almost all $s$, and so a subsequence of $w(s, \cdot)$ converges to $\fourier f$ in $\leb^2(\R^d)$. Therefore, $w(0, \xi) = \fourier f(\xi)$ for almost all $\xi \in \R^d$, and we conclude that $f \in \dom(\op)$ and
\formula{
 \form_H(v, v) & \ge \form(f, f) .
}
The first part of the theorem follows now by Theorem~\ref{thm:forms:s}. Furthermore, if we denote $u = \ext(f)$, then $u \in \dom_\harm$ and $v - u \in \dom_0$, so that indeed $\dom(\form_H)$ is a sum of $\dom_\harm$ and $\dom_0$.

Orthogonality of $\dom_\harm$ and $\dom_0$ follows now by a standard argument: if $u \in \dom_\harm$, $v \in \dom_0$ and $\alpha \in \R$, then $\form_H(u + \alpha v, u + \alpha v) \ge \form_H(u, u)$, which reduces to
\formula{
 2 \alpha \form_H(u, v) + \alpha^2 \form_H(v, v) \ge 0 ;
}
thus, $\form_H(u, v) = 0$. (Here, of course, $\form_H(u, v)$ denotes the Hermitian form corresponding to the quadratic form $\form_H(u, u)$).
\end{proof}

We conclude this section with a result that explains the name \emph{harmonic extension} used for the function $\ext(f)$.

\begin{theorem}
\label{thm:harm}
If $f \in \leb^2(\R^d)$, then the harmonic extension $u = \ext(f)$ satisfies
\formula[eq:harm:s]{
 \partial_s^2 u(s, x) + A(s) \Delta_x u(s, x) & = 0
}
in $\hh$ in the weak sense. Conversely, if $u$ satisfies~\eqref{eq:harm:s} in $\hh$ in the weak sense and $\|u(s, \cdot)\|_2$ is a bounded function of $s$, then $u = \ext(f)$ for some $f \in \leb^2(\R^d)$.
\end{theorem}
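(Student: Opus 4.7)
The first (direct) implication uses the Fourier representation $\fourier_x u(s,\xi) = \ph(|\xi|^2,s)\fourier f(\xi)$ together with the weak ODE $\ph_\lambda''(s) = \lambda A(s)\ph_\lambda(s)$ granted by Theorem~\ref{thm:krein}. Given $\varphi \in C_c^\infty(\hh)$, Plancherel in the $x$-variable combined with Fubini (justified by $\ph \le 1$, $\fourier f \in \leb^2(\R^d)$, local integrability of $A$, and the Schwartz nature of $\fourier_x\varphi(s,\cdot)$ on the compact $s$-support of $\varphi$) rewrites
\[
\int_\hh u(s,x)\bigl(\partial_s^2\varphi + A(s)\Delta_x\varphi\bigr)\,dx\,ds = \int_{\R^d}\fourier f(\xi)\int_0^\infty \ph(|\xi|^2,s)\,\overline{\partial_s^2\fourier_x\varphi(s,\xi) - A(s)|\xi|^2\fourier_x\varphi(s,\xi)}\,ds\,d\xi.
\]
For each fixed $\xi$ the function $\overline{\fourier_x\varphi(\cdot,\xi)}$ lies in $C_c^\infty((0,\infty))$, so applying the weak ODE for $\ph_{|\xi|^2}$ to this test function makes the inner $s$-integral vanish and the whole expression is $0$.

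For the converse, assume $u$ is weakly harmonic in $\hh$ with $\sup_{s>0}\|u(s,\cdot)\|_2 \le M$. The first step is to descend to a family of scalar weak ODEs, one per Fourier mode. Fix $\eta \in C_c^\infty((0,\infty))$ and test the weak PDE against $\varphi(s,x) = \eta(s)g(x)$ for $g \in C_c^\infty(\R^d)$; this asserts that the tempered distribution $\int_0^\infty \eta''(s)u(s,\cdot)\,ds + \int_0^\infty \eta(s)A(s)\Delta_x u(s,\cdot)\,ds$ vanishes. Taking the Fourier transform in $x$ and writing $w(s,\xi) := \fourier_x u(s,\xi)$, this means
\[
F_\eta(\xi) := \int_0^\infty \bigl(\eta''(s) - |\xi|^2 A(s)\eta(s)\bigr)w(s,\xi)\,ds = 0 \quad \text{for almost every } \xi \in \R^d.
\]
Running $\eta$ through a countable dense subfamily of $C_c^\infty((0,\infty))$ and taking the union of the exceptional null sets, we conclude that for almost every $\xi$, $w(\cdot,\xi)$ is a weak solution of the scalar ODE $y''(s) = |\xi|^2 A(s) y(s)$ on $(0,\infty)$.

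Classical linear ODE theory then yields that $w(\cdot,\xi)$ is $C^1$ and sits in a two-dimensional solution space containing $\ph_{|\xi|^2}$. The Wronskian
\[
W(\xi) := \ph(|\xi|^2,s)\,\partial_s w(s,\xi) - w(s,\xi)\,\partial_s \ph(|\xi|^2,s)
\]
is independent of $s$, and $W(\xi) = 0$ precisely when $w(\cdot,\xi)$ is a constant multiple of $\ph_{|\xi|^2}$. By the remark following Theorem~\ref{thm:krein}, on $E := \{\xi : W(\xi) \neq 0\}$ we have $|w(s,\xi)| \to \infty$ as $s \to \infty$; if $|E| > 0$, Fatou's lemma gives
\[
\liminf_{s\to\infty}\|w(s,\cdot)\|_2^2 \ge \int_E \liminf_{s\to\infty}|w(s,\xi)|^2\,d\xi = +\infty,
\]
contradicting $\|w(s,\cdot)\|_2 \le M$. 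Hence $W \equiv 0$ a.e., so $w(s,\xi) = a(\xi)\ph(|\xi|^2,s)$ for some measurable function $a$. Since $\ph(|\xi|^2,s) \to 1$ as $s \to 0^+$, a further application of Fatou yields $\|a\|_2 \le M$, and setting $f := \fourier^{-1} a \in \leb^2(\R^d)$ gives $u = \ext(f)$.

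The main technical hurdle I anticipate is the descent from the weak PDE in $\hh$ to a scalar weak ODE at almost every Fourier mode: this requires a careful distributional argument together with bookkeeping of $\xi$-dependent null sets over countably many test functions $\eta$. Framing the criterion for being \emph{parallel to $\ph_{|\xi|^2}$} through the scalar Wronskian $W(\xi)$ circumvents a secondary measurability concern that would otherwise arise when trying to select a second ODE solution $\tilde\ph(|\xi|^2,s)$ that depends measurably on $\xi$.
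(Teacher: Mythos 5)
Your proposal is correct and follows essentially the same route as the paper: the direct part via Plancherel and the weak ODE for $\ph_{|\xi|^2}$ applied to the $s$-section of the test function, and the converse by reducing to scalar weak ODEs for almost every Fourier mode (countably many test functions $\eta$), invoking the dichotomy that non-multiples of $\ph_{|\xi|^2}$ diverge, and using the uniform $\leb^2$ bound to define $f$. Your Wronskian criterion and the Fatou argument are just more explicit implementations of steps the paper states tersely ("such a solution is either a multiple of $\ph(|\xi|^2,s)$ or diverges", and the boundedness of $\|\fourier_x u(s,\cdot)\|_2$), not a different method.
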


\begin{proof}
We understand~\eqref{eq:harm:s} as
\formula[eq:harm:w]{
 \int_0^\infty \int_{\R^d} u(s, x) (g''(s) h(x) + A(s) g(s) \Delta h(x)) dx ds & = 0
}
for all test functions $g \in \cont_c^\infty((0, \infty))$ and $h \in \cont_c^\infty(\R^d)$. If $u = \ext(f)$, by Plancherel's theorem the left-hand side of~\eqref{eq:harm:w} is equal to
\formula{
 \int_0^\infty \int_{\R^d} \fourier f(\xi) \ph(|\xi|^2, s) (g''(s) - A(s) g(s) |\xi|^2) \fourier h(\xi) d\xi ds .
}
For any $\xi \in \R^d$ the integral over $s \in (0, \infty)$ is zero due to the fact that $\ph_\lambda''(s) = -\lambda A(s) \ph_\lambda(s)$ in the weak sense. The first statement is thus proved.

To prove the second one, we use a similar argument: Plancherel's theorem implies that
\formula{
 \int_0^\infty \int_{\R^d} \fourier_x u(s, \xi) (g''(s) - A(s) g(s) |\xi|^2) \fourier h(\xi) d\xi ds & = 0 .
}
By considering a countable and linearly dense set of pairs $g \in \cont_c^\infty((0, \infty))$ and $h \in C_c^\infty(\R^d)$, we see that for almost all $\xi \in \R^d$, $\fourier_x u(\cdot, \xi)$ is a weak solution of $\ph''(s) = |\xi|^2 A(s) \ph(s)$. Such a solution is either a multiple of $\ph(|\xi|^2, s)$, or a function that diverges to $\pm \infty$ as $s \to \infty$. Since the $\leb^2(\R^d)$ norms of $\fourier_x u(s, \cdot)$ are bounded as $s \to \infty$, for almost all $\xi \in \R^d$ we have $\fourier_x u(s, \xi) = \ph(|\xi|^2, s) F(\xi)$ for some $F(\xi)$. Using again boundedness of $\|\fourier_x u(s, \cdot)\|_2$ we conclude that $F \in \leb^2(\R^d)$, and therefore $u = \ext(\fourier^{-1} F)$.
\end{proof}

\subsection{Change of variable}
\label{sec:h:change}

We now rephrase the results of the previous section in terms of the operator~\eqref{eq:oph:t}. Suppose that $a(t)$ is a locally integrable nonnegative function on $[0, \infty)$ such that $(a(t))^{-1}$ is also locally integrable, but not integrable, on $[0, \infty)$. An extension to the case when $(a(t))^{-1}$ is integrable on $[0, \infty)$ is discussed in Appendix~\ref{app:finite}.

Following Section~\ref{sec:krein:change}, we define
\formula{
 \sigma(T) & = \int_0^T (a(t))^{-1} dt .
}
Using the results of Section~\ref{sec:krein:change}, we can identify the quadratic form $\form_H$ defined earlier in this section with the following one.

\begin{definition}
\label{def:formh:t}
For a function $\tilde{u}$ on $\hh$, the quadratic form $\tilde{\form}_H(\tilde{u}, \tilde{u})$ is defined by the expression
\formula[eq:formh:t]{
 \tilde{\form}_H(\tilde{u}, \tilde{u}) & = \int_0^\infty \int_{\R^d} a(t) |\nabla_{t,x} \tilde{u}(t, x)|^2 dx dt .
}
The domain $\dom(\tilde{\form}_H)$ of this form is the set of all locally integrable functions $\tilde{u}$ on $\hh$ which satisfy the following conditions: $\tilde{u}(t, x)$ is weakly differentiable with respect to~$t$, $(a(t))^{1/2} \tilde{u}(t, x)$ is weakly differentiable with respect to $x$, the integral in~\eqref{eq:formh:t} is finite, and
\formula[eq:formh:t:dom]{
 \int_{t_0}^{t_1} \int_{\R^d} a(t) |\tilde{u}(t, x)|^2 dx dt < \infty
}
whenever $0 < t_0 < t_1$. Finally, if $f \in \leb^2(\R^d)$, then we define its harmonic extension $\tilde{u} = \extt{f}$ by $\tilde{u}(t, x) = u(\sigma(t), x)$, where $u = \ext(f)$.
\end{definition}

Recall that under our assumptions, the function $A(s)$ defined by $A(\sigma(t)) = (a(t))^2$ is locally integrable on $[0, \infty)$. Equivalence of the quadratic forms $\form_H$ and $\tilde{\form}_H$, as well as the corresponding Dirichlet-to-Neumann operators, is very simple when the coefficients are regular enough. In the general case, one needs to pay extra attention to domains.

\begin{lemma}
\label{lem:formh:sub}
Suppose that $u(\sigma(t), x) = \tilde{u}(t, x)$. Then $u \in \dom(\form_H)$ if and only if $\tilde{u} \in \dom(\tilde{\form}_H)$, and in this case $\form_H(u, u) = \tilde{\form}_H(\tilde{u}, \tilde{u})$.
\end{lemma}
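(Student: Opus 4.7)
My plan is to push each ingredient in the definition of $\dom(\form_H)$ through the change of variable $s = \sigma(t)$ and then convert the integrand of $\form_H(u, u)$ using the pointwise identities from Section~\ref{sec:krein:change}. Under our standing assumptions, $\sigma \colon [0, \infty) \to [0, \infty)$ is an absolutely continuous strictly increasing bijection whose inverse is also absolutely continuous, with $\sigma'(t) = (a(t))^{-1}$ and $A(\sigma(t)) = (a(t))^2$ almost everywhere; in particular $a$ and $A$ are strictly positive almost everywhere.

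For the domain equivalence, I would transfer the three defining clauses separately. The local integrability condition~\eqref{eq:formh:s:dom} transfers directly by substitution: with $s_i = \sigma(t_i)$,
\formula{
 \int_{s_0}^{s_1} \int_{\R^d} A(s) |u(s, x)|^2 dx \, ds & = \int_{t_0}^{t_1} \int_{\R^d} A(\sigma(t)) \sigma'(t) |\tilde{u}(t, x)|^2 dx \, dt = \int_{t_0}^{t_1} \int_{\R^d} a(t) |\tilde{u}(t, x)|^2 dx \, dt ,
}
and the same substitution without the weight shows that $u$ is locally integrable on $\hh$ iff $\tilde{u}$ is. For weak differentiability in the normal direction I would use the ACL characterisation: $\tilde{u}$ has a weak $\partial_t$ iff $\tilde{u}(\cdot, x)$ is absolutely continuous on $[0, \infty)$ for almost all $x$, which, since $\sigma$ and $\sigma^{-1}$ are absolutely continuous, is equivalent to $u(\cdot, x) = \tilde{u}(\sigma^{-1}(\cdot), x)$ being absolutely continuous for almost all $x$, and hence to $u$ having a weak $\partial_s$. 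For weak differentiability in $x$, since the substitution fixes the $x$-coordinate, I would argue slice by slice in $t$: for almost every $t$, the functions $(A(\sigma(t)))^{1/2} u(\sigma(t), \cdot) = a(t) \tilde{u}(t, \cdot)$ and $(a(t))^{1/2} \tilde{u}(t, \cdot)$ differ only by the positive scalar $(a(t))^{1/2}$, so weak $x$-differentiability of one slice is equivalent to that of the other, and this slice-wise statement lifts to $\hh$ via the ACL characterisation together with local integrability of the weighted $x$-gradient verified from the integral identity below.

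With the domain equivalence in place, the quadratic form identity follows by splitting $\tilde{\form}_H(\tilde{u}, \tilde{u})$ into the $t$-derivative and $x$-derivative pieces. Applying~\eqref{eq:form:sub} to $\tilde{u}(\cdot, x)$ and Fubini gives
\formula{
 \int_0^\infty \int_{\R^d} a(t) (\partial_t \tilde{u}(t, x))^2 dx \, dt & = \int_0^\infty \int_{\R^d} (\partial_s u(s, x))^2 dx \, ds ,
}
while applying~\eqref{eq:inner:sub} componentwise to $\partial_{x_j} \tilde{u}(t, x) = \partial_{x_j} u(\sigma(t), x)$ and summing yields
\formula{
 \int_0^\infty \int_{\R^d} a(t) |\nabla_x \tilde{u}(t, x)|^2 dx \, dt & = \int_0^\infty \int_{\R^d} A(s) |\nabla_x u(s, x)|^2 dx \, ds .
}
Adding the two gives $\tilde{\form}_H(\tilde{u}, \tilde{u}) = \form_H(u, u)$; in particular the two finiteness conditions are equivalent. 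The one delicate point I expect is the weak $x$-differentiability clause: because $(a(t))^{-1/2}$ need not be locally integrable on $[0, \infty)$, one cannot treat multiplication by $(a(t))^{\pm 1/2}$ as a global Leibniz operation on $\hh$ and must instead argue slice-wise in $t$ and reassemble via ACL, as above.
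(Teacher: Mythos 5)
Your argument is correct and follows essentially the same route as the paper's proof: absolute continuity of $\sigma$ and $\sigma^{-1}$ plus the ACL characterisation for the normal-direction weak derivative, the substitution identities~\eqref{eq:inner:sub} and~\eqref{eq:form:sub} for the equality of the forms and of the local conditions~\eqref{eq:formh:s:dom} and~\eqref{eq:formh:t:dom}, and a slice-wise transfer (which the paper leaves implicit) of the weak $x$-differentiability of $(A(s))^{1/2}u$ versus $(a(t))^{1/2}\tilde{u}$. The only slight inaccuracy is the claim that local integrability of $u$ and $\tilde{u}$ follows from "the same substitution without the weight" --- the substitution produces the Jacobian $(a(t))^{-1}$, so this transfer really comes from the locally uniform $\leb^2$-bounds on $u(s,\cdot)$ rather than from a weight-free change of variables; this is a minor point which the paper's proof does not address either.
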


\begin{proof}
Recall that if $f(\sigma(t)) = \tilde{f}(t)$, then absolute continuity of $f(s)$ is equivalent to absolute continuity of $\tilde{f}(t)$. By the ACL characterisation of weak differentiability, weak differentiability of $u(s, x)$ with respect to $s$ is equivalent to weak differentiability of $\tilde{u}(t, x)$ with respect to $t$.

Using the same method together with formula~\eqref{eq:inner:sub}, we see that weak differentiability of $(A(s))^{1/2} u(s, x)$ with respect to $x$ is equivalent to weak differentiability of $(a(t))^{1/2} \tilde{u}(t, x)$ with respect to $t$.

By~\eqref{eq:inner:sub} and~\eqref{eq:form:sub}, the integrals defining $\form_H(u, u)$ and $\tilde{\form}_H(\tilde{u}, \tilde{u})$ are equal. Similarly, by~\eqref{eq:inner:sub}, the integrals in~\eqref{eq:formh:s:dom} and~\eqref{eq:formh:t:dom} are equal if $s_0 = \sigma(t_0)$ and $s_1 = \sigma(t_1)$. Therefore, $u \in \dom(\form_H)$ if and only if $\tilde{u} \in \dom(\tilde{\form}_H)$, and we already noted that in this case $\form_H(u, u) = \tilde{\form}_H(\tilde{u}, \tilde{u})$.
\end{proof}

The following result follows almost immediately from Theorems~\ref{thm:oph:s} and~\ref{thm:harm}.

\begin{theorem}
\label{thm:oph:t}
Suppose that $f \in \leb^2(\R^d)$ and $\tilde{u} = \extt(f)$. Then $\tilde{u}$ satisfies
\formula[eq:harm:t]{
 \partial_{t,x}(a(t) \partial_{t,x} \tilde{u}(t, x)) & = 0
}
in $\hh$ in the weak sense. Furthermore, $f \in \dom(\op)$ if and only if any of the limits in the identity
\formula{
 \op f & = -\lim_{t \to 0^+} \frac{\tilde{u}(t, \cdot) - \tilde{u}(0, \cdot)}{\sigma(t)} = -\lim_{t \to 0^+} a(t) \partial_t \tilde{u}(t, \cdot)
}
exists in $\leb^2(\R^d)$. Finally, if $\tilde{u}$ satisfies~\eqref{eq:harm:t} in the weak sense and $\|u(t, \cdot)\|_2$ is a bounded function of $t$, then $\tilde{u} = \extt(f)$ for some $f \in \leb^2(\R^d)$.
\end{theorem}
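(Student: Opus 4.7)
The plan is to deduce all three assertions from Theorems~\ref{thm:oph:s} and~\ref{thm:harm} by translating them through the change of variable $s = \sigma(t)$ of Section~\ref{sec:krein:change}. Throughout I would set $u = \ext(f)$, so that $\tilde u(t, x) = u(\sigma(t), x)$ by Definition~\ref{def:ext}, and rely on the pointwise identities $a(t) \partial_t \tilde u(t, x) = \partial_s u(\sigma(t), x)$, $\nabla_x \tilde u(t, x) = \nabla_x u(\sigma(t), x)$, and $(a(t))^2 = A(\sigma(t))$ from Section~\ref{sec:krein:change}.

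For the weak harmonicity I would read~\eqref{eq:harm:t} in divergence form: for every $\Phi \in \cont_c^\infty(\hh)$,
\[
 \int_\hh a(t) \nabla_{t,x} \tilde u(t,x) \cdot \nabla_{t,x} \Phi(t,x) \, dx \, dt = 0 .
\]
Substituting the identities above and changing variable $s = \sigma(t)$ (with $dt = a(t)\,ds$), this integral transforms into
\[
 \int_\hh \bigl(\partial_s u(s,x) \partial_s \Psi(s,x) + A(s) \nabla_x u(s,x) \cdot \nabla_x \Psi(s,x)\bigr) dx\,ds
\]
where $\Psi(s,x) = \Phi(\sigma^{-1}(s), x)$. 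This is precisely the divergence-form weak version of~\eqref{eq:harm:s} tested against $\Psi$, and it vanishes by Theorem~\ref{thm:harm} once the admissible test-function class is widened from $\cont_c^\infty$ to include compactly supported functions that are merely absolutely continuous in $s$, by a routine density argument.

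For the Dirichlet-to-Neumann identity, the pointwise formula $a(t) \partial_t \tilde u(t, \cdot) = \partial_s u(\sigma(t), \cdot)$ combined with $\sigma(t) \to 0^+$ as $t \to 0^+$ shows that $\lim_{t \to 0^+} a(t) \partial_t \tilde u(t, \cdot)$ exists in $\leb^2(\R^d)$ if and only if $\lim_{s \to 0^+} \partial_s u(s, \cdot)$ does, with identical value; Theorem~\ref{thm:oph:s} then gives the equivalence with $f \in \dom(\op)$ and identifies the common limit as $-\op f$. The difference-quotient form is handled by the equality
\[
 \frac{\tilde u(t, \cdot) - \tilde u(0, \cdot)}{\sigma(t)} = \frac{u(\sigma(t), \cdot) - u(0, \cdot)}{\sigma(t)}
\]
together with~\eqref{eq:u:int}. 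For the converse, if $\tilde u$ solves~\eqref{eq:harm:t} weakly and $\|\tilde u(t, \cdot)\|_2$ is bounded in $t$, then $u(s, x) = \tilde u(\sigma^{-1}(s), x)$ is a weak solution of~\eqref{eq:harm:s} (reversing the change-of-variable calculation) with $\|u(s, \cdot)\|_2$ bounded in $s$, so Theorem~\ref{thm:harm} produces $f \in \leb^2(\R^d)$ with $u = \ext(f)$, whence $\tilde u = \extt(f)$.

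The main technical hurdle I anticipate is exactly this test-function pullback: because $\sigma$ is only absolutely continuous in general, a smooth compactly supported $\Phi$ in the $t$-variable produces $\Psi$ that is merely absolutely continuous in $s$, so it does not feed directly into the $\cont_c^\infty$ weak formulation used in Theorem~\ref{thm:harm}. The cleanest resolution is to observe that the divergence-form weak equation automatically extends to compactly supported Lipschitz test functions by density in the natural weighted Sobolev-type norm; alternatively, one can first mollify $a(t)$, prove the identity in the smooth-coefficient case, and pass to the limit. Everything else reduces to bookkeeping with the identities~\eqref{eq:op:sub}, \eqref{eq:inner:sub}, and~\eqref{eq:form:sub}.
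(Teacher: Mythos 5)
Your overall route --- transport everything through $s = \sigma(t)$ and reduce to Theorems~\ref{thm:oph:s} and~\ref{thm:harm} --- is exactly the paper's, and your treatment of the middle statement (the identity $a(t)\partial_t \tilde u(t,\cdot) = \partial_s u(\sigma(t),\cdot)$ together with \eqref{eq:u:int} for the difference quotient) is fine. The gap lies in your reading of ``in the weak sense''. You use the first-order divergence form $\int_\hh a(t)\, \nabla_{t,x}\tilde u \cdot \nabla_{t,x}\Phi \, dx\, dt = 0$, which presupposes that $\nabla_{t,x}\tilde u$ exists as a function and that $a(t)\nabla_{t,x}\tilde u$ is locally integrable. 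This is not given: the theorem only assumes $f \in \leb^2(\R^d)$, and the only bound on the $x$-gradient of $u = \ext(f)$ coming from equality in \eqref{eq:ph:min}, namely $\int_0^\infty \lambda A(s)(\ph_\lambda(s))^2 ds \le \psi(\lambda)$, yields a finite quantity after integration against $|\fourier f(\xi)|^2$ only when $f \in \dom(\form)$. One can in fact salvage local integrability of $A(s)|\nabla_x u(s,x)|^2$ away from $s=0$ in the regular case, but that requires an additional estimate of the type $\lambda \ph_\lambda(s) \le 2\bigl(s\int_{s/2}^s A(r)\,dr\bigr)^{-1}$ (obtained by integrating $\ph_\lambda'' = \lambda A \ph_\lambda$ over $[s/2,s]$ and using \eqref{eq:ph:est}), which you neither state nor prove. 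Moreover, in the converse direction your hypothesis (divergence-form solution, hence weakly differentiable $\tilde u$) is strictly stronger than the theorem's, so you prove a weaker converse. The paper's notion of weak solution, as in \eqref{eq:harm:w}, keeps \emph{both} derivatives on the test function and requires only local integrability of $\tilde u$; with that reading, none of these issues arise.

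Relatedly, the ``main technical hurdle'' you flag is resolved in the paper without any density or mollification argument, and both of your proposed fixes are shaky: the density argument presupposes exactly the gradient integrability questioned above, while mollifying $a(t)$ perturbs $\sigma$, $A$, $\ph_\lambda$ and $\psi$ simultaneously, so passing to the limit would need continuity properties of Krein's correspondence --- far from routine bookkeeping. The paper's device is to choose test functions smooth in the $s$-variable: for $v \in \cont_c^\infty(\hh)$ set $\tilde v(t,x) = v(\sigma(t),x)$, test the distributional form of \eqref{eq:harm:t} against $\tilde v$ with all derivatives placed on $\tilde v$, apply \eqref{eq:op:sub} --- which is now merely the chain rule for the smooth function $v$ --- and use $a(t)\,dt = A(s)\,ds$ to convert the integral into the distributional form of \eqref{eq:harm:s} tested against $v$; Theorem~\ref{thm:harm} then finishes both the forward and the converse statements, and no derivative ever falls on $\tilde u$.
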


\begin{proof}
The first and the last statements are merely a reformulation of Theorem~\ref{thm:harm}, combined with the identification~\eqref{eq:op:sub} of the operators $(A(s))^{-1} \partial_s^2$ and $(a(t))^{-1} \partial_t(a(t) \partial_t)$: for any test function $v \in C_c^\infty(\hh)$ and $\tilde{v}(t, x) = v(\sigma(t), x)$ we have
\formula{
 & \int_0^\infty \int_{\R^d} \partial_{t,x}(a(t) \partial_{t,x} \tilde{u}(t, x)) \tilde{v}(t, x) dx dt \\
\displaybreak[0]
 & \hspace*{5em} = \int_0^\infty \int_{\R^d} \tilde{u}(t, x) \partial_{t,x}(a(t) \partial_{t,x} \tilde{v}(t, x)) dx dt \\
\displaybreak[0]
 & \hspace*{5em} = \int_0^\infty \int_{\R^d} a(t) \tilde{u}(t, x) ((a(t))^{-1} \partial_t(a(t) \partial_t \tilde{v}(t, x)) + \Delta_x \tilde{v}(t, x)) dx dt \\
 & \hspace*{5em} = \int_0^\infty \int_{\R^d} A(s) u(s, x) ((A(s))^{-1} \partial_s^2 v(s, x) + \Delta_x v(s, x)) dx ds \\
 & \hspace*{5em} = \int_0^\infty \int_{\R^d} (\partial_s^2 u(s, x) + A(s) \Delta_x u(s, x)) v(s, x) dx ds .
}
The middle statement is a direct consequence of Theorem~\ref{thm:oph:s}. 
\end{proof}

By combining Lemma~\ref{lem:formh:sub} with Theorems~\ref{thm:forms:s} and~\ref{thm:min}, we immediately get the following result.

\begin{theorem}
\label{thm:forms:t}
Let $f \in \leb^2(\R^d)$ and $\tilde{u} = \extt(f)$. Then $f \in \dom(\form)$ if and only if $\tilde{u} \in \dom(\tilde{\form}_H)$. If $\tilde{v} \in \dom(\tilde{\form}_H)$, $f(x) = \tilde{v}(0, x)$ and $\tilde{u} = \extt(f)$, then $f \in \dom(\form)$ and
\formula{
 \tilde{\form}_H(\tilde{v}, \tilde{v}) & \ge \tilde{\form}_H(\tilde{u}, \tilde{u}) = \form(f, f) .
}
Moreover, the space $\dom(\tilde{\form}_H)$ is a direct sum of $\tilde{\dom}_0$ and $\tilde{\dom}_\harm$, where
\formula{
 \tilde{\dom}_0 & = \{ \tilde{u} \in \dom(\tilde{\form}_H) : \tilde{u}(0, x) = 0 \text{ for almost all } x \in \R^d \} , \\
 \tilde{\dom}_\harm & = \{ \extt(f) : f \in \dom(\form) \}
}
are orthogonal to each other with respect to $\tilde{\form}_H$.
\end{theorem}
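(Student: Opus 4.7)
The plan is to reduce Theorem~\ref{thm:forms:t} to the already-proved Theorems~\ref{thm:forms:s} and~\ref{thm:min} by transporting the entire statement across the change of variable $s = \sigma(t)$ from Section~\ref{sec:krein:change}. The workhorse is Lemma~\ref{lem:formh:sub}, which says that the map $u \mapsto \tilde{u}$ defined by $\tilde{u}(t,x) = u(\sigma(t),x)$ is a bijection between $\dom(\form_H)$ and $\dom(\tilde{\form}_H)$ that preserves the value of the quadratic form. Since $\sigma(0) = 0$, this map also preserves boundary traces: $\tilde{u}(0,x) = u(0,x)$. Finally, the extension operators are set up compatibly, $\extt(f) = \ext(f) \circ (\sigma,\mathrm{id})$, so the harmonic extensions transport to one another.

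First I would establish the equivalence $\tilde{u} \in \dom(\tilde{\form}_H) \Leftrightarrow f \in \dom(\form)$. Given $f \in \leb^2(\R^d)$, set $u = \ext(f)$ and $\tilde{u} = \extt(f)$, so that $\tilde{u}(t,x) = u(\sigma(t),x)$ by Definition~\ref{def:formh:t}. By Lemma~\ref{lem:formh:sub}, $\tilde{u} \in \dom(\tilde{\form}_H)$ iff $u \in \dom(\form_H)$, and by Theorem~\ref{thm:forms:s} the latter is equivalent to $f \in \dom(\form)$, with $\form_H(u,u) = \form(f,f)$. Chaining these equalities yields $\tilde{\form}_H(\tilde{u},\tilde{u}) = \form_H(u,u) = \form(f,f)$.

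Next I would prove the minimisation inequality. Given $\tilde{v} \in \dom(\tilde{\form}_H)$ with boundary value $f(x) = \tilde{v}(0,x)$, define $v(s,x) = \tilde{v}(\sigma^{-1}(s),x)$; by Lemma~\ref{lem:formh:sub}, $v \in \dom(\form_H)$ and $\form_H(v,v) = \tilde{\form}_H(\tilde{v},\tilde{v})$, while $\sigma(0)=0$ gives $v(0,x) = f(x)$. Applying Theorem~\ref{thm:min} to $v$, we deduce $f \in \dom(\form)$ and
\begin{equation*}
\form_H(v,v) \ge \form_H(u,u) = \form(f,f),
\end{equation*}
where $u = \ext(f)$. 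Transporting both sides back via Lemma~\ref{lem:formh:sub} (and using $\form_H(u,u) = \tilde{\form}_H(\tilde{u},\tilde{u})$ from the previous step) yields the desired inequality $\tilde{\form}_H(\tilde{v},\tilde{v}) \ge \tilde{\form}_H(\tilde{u},\tilde{u}) = \form(f,f)$.

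For the direct sum decomposition, I would simply observe that the bijection $u \leftrightarrow \tilde{u}$ maps $\dom_0$ to $\tilde{\dom}_0$ (since the boundary trace is preserved) and $\dom_\harm$ to $\tilde{\dom}_\harm$ (since $\extt(f) = \ext(f)\circ(\sigma,\mathrm{id})$ by definition), while preserving the quadratic form and hence also the associated Hermitian form. Consequently the decomposition $\dom(\form_H) = \dom_0 \oplus \dom_\harm$ from Theorem~\ref{thm:min}, together with $\form_H$-orthogonality of its summands, transports directly to the analogous decomposition and orthogonality statement for $\tilde{\form}_H$. Since the entire argument is a mechanical transport across the change of variable, there is no serious obstacle; the only points that merit a moment's care are that $\sigma$ preserves the boundary $t = 0$ and that the extension operators $\ext$ and $\extt$ correspond under the substitution, both of which are immediate from Section~\ref{sec:krein:change} and Definition~\ref{def:formh:t}.
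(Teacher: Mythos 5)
Your proposal is correct and is essentially the paper's own argument: the paper proves Theorem~\ref{thm:forms:t} precisely by combining Lemma~\ref{lem:formh:sub} (the form-preserving bijection $u \leftrightarrow \tilde{u}$ under $s = \sigma(t)$, which preserves boundary values since $\sigma(0)=0$) with Theorems~\ref{thm:forms:s} and~\ref{thm:min}. You have simply written out the transport in more detail than the paper does.
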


%
%

\section{Examples}
\label{sec:ex}

Before we proceed with applications of the extension technique introduced in the previous section, we discuss several examples. Noteworthy, there is only a handful of known pairs of explicit coefficients $A(s)$ (or $a(t)$) and corresponding complete Bernstein functions $\psi(\lambda)$; Chapter~15 in~\cite{bib:ssv12} contains a concise table in a different (probabilistic) language. This is, however, not an essential problem in most applications of the variational principles of Theorem~\ref{thm:var:h}, because in order to use them, one typically does not require an explicit form of the coefficients: it is sufficient to know that appropriate coefficients $A(s)$ or $a(t)$ exist.

Throughout this section, as it was the case in the introduction, we drop tilde from the notation, and write $u(t, x)$ for what was denoted by $\tilde{u}(t, x)$ in Section~\ref{sec:h:change}. We also simply write $\ph_\lambda(t)$ instead of more formal $\ph_\lambda(\sigma(t))$.

Most examples below are arranged in the following way: we begin with coefficients $A(s)$ or $a(t)$ and find the corresponding $\ph_\lambda(s)$ or $\ph_\lambda(t)$. The harmonic extension $u = \ext(f)$ for the operator~\eqref{eq:oph:t} is then given by
\formula{
 \fourier_x u(t, \xi) & = \ph_\lambda(t) \fourier f(\xi) ,
}
where $\lambda = |\xi|^2$.

\subsection{Classical Dirichlet-to-Neumann operator}
\label{sec:ex:classical}

Let $A(s) = 1$, or $a(t) = 1$. Then $\sigma(t) = t$, so the two parametrisations are identical. Clearly, $\ph_\lambda(t) = \exp(-\lambda^{1/2} t)$, $\psi(\lambda) = \lambda^{1/2}$, and we recover the classical result: if $u(t, x)$ is harmonic in $\hh$ (that is, $\Delta_{t,x} u(t, x) = 0$ in $\hh$) with boundary value $u(0, x) = f(x)$, then
\formula[eq:classical]{
\begin{aligned}
 \int_{\R^d} |\xi| |\fourier f(\xi)|^2 d\xi & = \int_0^\infty \int_{\R^d} |\nabla_{t,x} u(t, x)|^2 dx dt , \\
 (-\Delta)^{1/2} f & = \partial_t u(0, \cdot) .
\end{aligned}
}
Two things need to be clarified here. In this section we understand that in expressions similar to~\eqref{eq:classical} one side is defined (i.e.\@ the integral is finite in either side of the first equality; $|\xi| \fourier f(|\xi|)$ is square integrable in the left-hand side of the second equality; the partial derivative exists with a limit in $\leb^2(\R^d)$ in the right-hand side of the second equality) if and only if the other one is also defined. Furthermore, we need to impose some boundedness condition to assert that indeed $u = \ext(f)$ in the sense of Definition~\ref{def:ext}. By Theorem~\ref{thm:harm}, it is sufficient to assume that $u$ is twice differentiable in the weak sense, with $u(s, \cdot)$ bounded in $\leb^2$ for $s \in (0, \infty)$, and that harmonicity is understood in the weak sense. Note, however, that in most cases this condition can be significantly relaxed.

\subsection{Caffarelli--Silvestre extension technique}
\label{sec:ex:cs}

Let $\alpha \in (0, 2)$, and define two constants:
\formula{
 c_\alpha & = 2^\alpha \Gamma(\alpha/2) |\Gamma(-\alpha/2)|^{-1} , & C_\alpha & = 2^{1 - \alpha/2} (\Gamma(\alpha/2))^{-1} .
}
Consider the coefficients
\formula{
 A(s) & = \alpha^{-2} c_\alpha^{2/\alpha} s^{2/\alpha - 2} && \text{or} & a(t) & = \alpha^{-1} c_\alpha t^{1 - \alpha} .
}
Then one finds that
\formula{
 \ph_\lambda(s) & = C_\alpha (c_\alpha \lambda^{\alpha/2} s)^{1/2} K_{\alpha/2}((c_\alpha \lambda^{\alpha/2} s)^{1/\alpha}) , \\
 \psi(\lambda) & = \lambda^{\alpha/2} ,
}
where $K_{\alpha/2}$ is the modified Bessel function of the second kind. In variable $t$, we have
\formula{
 \sigma(t) & = c_\alpha^{-1} t^\alpha , \\
 \ph_\lambda(t) & = C_\alpha (\lambda^{1/2} t)^{\alpha/2} K_{\alpha/2}(\lambda^{1/2} t) , \\
 \psi(\lambda) & = \lambda^{\alpha/2} ,
}
Therefore, if $u(t, x)$ satisfies
\formula{
 \nabla_{t,x} (t^{1 - \alpha} \nabla_{t,x} u(t, x)) = 0
}
in $\hh$ with boundary value $u(0, x) = f(x)$, then
\formula{
 \int_{\R^d} |\xi|^\alpha |\fourier f(\xi)|^2 d\xi & = \int_0^\infty \int_{\R^d} \alpha^{-1} c_\alpha t^{1 - \alpha} |\nabla_{t,x} u(s, x)|^2 dx ds
}
and
\formula{
 (-\Delta)^{\alpha/2} f & = -\alpha^{-1} c_\alpha \lim_{t \to 0^+} t^{1 - \alpha} \partial_t u(t, \cdot) .
}
This can be re-written in variable $s$: if $u(s, x)$ satisfies
\formula{
 \partial_s^2 u(s, x) + \alpha^{-2} c_\alpha^{2/\alpha} s^{2/\alpha - 2} \Delta_x u(s, x) = 0
}
in $\hh$ with boundary value $u(0, x) = f(x)$, then
\formula{
 \int_{\R^d} |\xi|^\alpha |\fourier f(\xi)|^2 d\xi & = \int_0^\infty \int_{\R^d} \bigl( (\partial_s u(s, x))^2 + \alpha^{-2} c_\alpha^{2/\alpha} s^{2 - 2/\alpha} \nabla_x u(s, x)|^2 \bigr) dx ds
}
and
\formula{
 (-\Delta)^{\alpha/2} f & = -\partial_s u(0, \cdot) .
}
This example was first studied by Molchanov and Ostrovski~\cite{bib:mo69} in the language of stochastic processes, and then recently by Caffarelli and Silvestre~\cite{bib:cs07} in the context of non-local partial differential equations; see Introduction for further references.

\subsection{Quasi-relativistic operator}
\label{sec:ex:relat}

Let $m > 0$ and define
\formula{
 A(s) & = (1 + 2 m s)^{-2} && \text{or} & a(t) & = e^{-2 m t} .
}
Then
\formula{
 \ph_\lambda(s) & = (1 + 2 m s)^{(2m)^{-1} (m - (m^2 + \lambda)^{1/2})} , \\
 \psi(\lambda) & = (m^2 + \lambda)^{1/2} - m .
}
In variable $t$, we have
\formula{
 \sigma(t) & = (2 m)^{-1} (e^{2 m t} - 1) , \\
 \ph_\lambda(t) & = e^{(m - (m^2 + \lambda)^{1/2}) t} .
}
Therefore, the quasi-relativistic operator $(-\Delta + m^2)^{1/2} - m$ can be expressed as the Dirichlet-to-Neumann operator for differential operators
\formula{
 \partial_s^2 u(s, x) + (1 + 2 m s)^{-2} \Delta_x u(s, x)
}
or
\formula{
 \nabla_{t,x} (e^{-2 m t} \nabla_{t,x} u(t, x))
}
in $\hh$. This example was studied, in the probabilistic context, in~\cite{bib:py03}.

\subsection{A string of finite length}
\label{sec:ex:relat:dual}

A closely related example is obtained by considering
\formula{
 A(s) & = (1 - 2 m s)^{-2} && \text{or} & a(t) & = e^{2 m t} ,
}
where in this case the range of $s$ is finite: $s \in (0, (2 m)^{-1})$. The corresponding version of the results of Section~\ref{sec:h} is discussed in Appendix~\ref{app:finite}.

Note that $((2 m)^{-1} - s) A(s)$ is not integrable, so that $\ph_\lambda(s)$ automatically satisfies the Dirichlet condition at $s = (2 m)^{-1}$. In particular, $A(s)$ is not integrable, and therefore $u(s, \cdot)$ automatically converges in $\leb^2(\R^d)$ to zero as $s \to ((2 m)^{-1})^-$ for every $u \in \dom(\form_H)$. We have
\formula{
 \ph_\lambda(s) & = (1 - 2 m s)^{(2m)^{-1} (m + (m^2 + \lambda)^{1/2})} , \\
 \psi(\lambda) & = (m^2 + \lambda)^{1/2} + m ,
}
and in variable $t$,
\formula{
 \sigma(t) & = (2 m)^{-1} (1 - e^{-2 m t}) , \\
 \ph_\lambda(t) & = e^{-(m + (m^2 + \lambda)^{1/2}) t} .
}
This gives a harmonic extension problem for the operator $(-\Delta + m^2)^{1/2} + m$, which differs by a constant from the quasi-relativistic operator from the previous example.

Since $(\lambda + m^2)^{1/2}$ is complete Bernstein function, there must be a corresponding coefficient $A(s)$ or $a(t)$. To our knowledge, the explicit form for this coefficient is not known. There is, however, a different way to represent $(-\Delta + m^2)^{1/2}$: consider the classical extension problem for $-\Delta + m^2$ instead of $-\Delta$. This approach was exploited in, for example,~\cite{bib:a16,bib:bmr10}.

\subsection{Quasi-relativistic-type operators}
\label{sec:ex:relat:general}

More generally, if $\psi(-\Delta)$ is the Dirichlet-to-Neumann operator for the differential operator~\eqref{eq:oph:t}, then it is easy to construct an analogous representation for $\psi(\mu - \Delta) - \psi(\mu)$, where $\mu > 0$. Indeed, denote
\formula{
 \op_a f(t) & = (a(t))^{-1} (a(t) f'(t))' = f''(t) + (a(t))^{-1} a'(t) f'(t)
}
and observe that
\formula{
 \op_a(\ph_\mu f) & = \ph_\mu \op_a f + 2 \ph_\mu' f' + f \op_a \ph_\mu \\
 & = \ph_\mu \op_a f + 2 \ph_\mu' f' + \mu f \ph_\mu .
}
On the other hand, if $b(t) = a(t) (\ph_\mu(t))^2$, then
\formula{
 \op_b f(t) & = f''(t) + (a(t))^{-1} a'(t) + 2 (\ph_\mu(t))^{-1} \ph_\mu'(t) f'(t) .
}
Therefore,
\formula{
 \ph_\mu^{-1} \op_a(\ph_\mu f) & = \op_b f + \mu f .
}
Set $f_\lambda(t) = (\ph_\mu(t))^{-1} \ph_{\mu + \lambda}(t)$. Then 
\formula{
 \op_b f_\lambda & = \ph_\mu^{-1} \op_a(\ph_\mu f_\lambda) - \mu f_\lambda = \ph_\mu^{-1} \op_a \ph_\lambda - \mu f_\lambda = (\mu + \lambda) \ph_\mu^{-1} \ph_\lambda - \mu f_\lambda = \lambda f_\lambda .
}
Furthermore,
\formula{
 \lim_{t \to 0^+} b(t) f_\lambda'(t) & = \lim_{t \to 0^+} a(t) (\ph_\mu(t))^2 ((\ph_\mu(t))^{-1} \ph_{\mu + \lambda}(t))' \\
 & = \lim_{t \to 0^+} a(t) (\ph_{\mu + \lambda}'(t) \ph_\mu(t) - \ph_{\mu + \lambda}(t) \ph_\mu'(t)) = \psi(\mu + \lambda) - \psi(\mu) .
}
We conclude that the coefficient $b(t)$ indeed corresponds to $\psi(\mu + \lambda) - \psi(\mu)$.

In particular, the operator $(m^2 - \Delta)^{\alpha/2} - m^\alpha$ is the Dirichlet-to-Neumann operator for the differential operator
\formula{
 \alpha^{-1} c_\alpha m^\alpha \nabla_{t,x} (t (K_{\alpha/2}(m t))^2 \nabla_{t,x} u(t, x)) ;
}
here we set $\mu = m^2$. This calculation is due to~\cite{bib:dy06} in probabilistic context.

Note that the argument used in this section works well in variable $t$, but it is not easy to reproduce in variable $s$: the operator $(A(s))^{-1} (\ph_\mu(s) f(s))''$ is not of the form $(B(s))^{-1} f(s) + \mu f(s)$ unless another change of variable is introduced.

\subsection{Operators in the theory of linear water waves}
\label{sec:ex:waves}

In the theory of linear water waves, one often considers the Dirichlet-to-Neumann operator for $A(s) = 1$, or $a(t) = 1$, in a finite interval $(0, R)$ (which represents the depth of the ocean). Since $\sigma(t) = t$, the two parametrisations coincide. A version of the results of Section~\ref{sec:h} adapted to the present setting is discussed in Appendix~\ref{app:finite}.

Imposing Neumann boundary condition at $t = R$ is equivalent to setting $A(s) = 0$ for $s \ge R$, and one easily finds that
\formula{
 \ph_\lambda(t) & = \frac{\cosh(\lambda^{1/2} (R - t))}{\cosh (\lambda^{1/2} R)} \, , \\
 \psi(\lambda) & = \lambda^{1/2} \tanh(\lambda^{1/2} R) .
}
The corresponding operator has Fourier symbol $|\xi| \tanh(R |\xi|)$.

Dirichlet condition at $t = R$ is equivalent to considering a string of finite length $R$, and we get
\formula{
 \ph_\lambda(t) & = \frac{\sinh(\lambda^{1/2} (R - t))}{\sinh (\lambda^{1/2} R)} \, , \\
 \psi(\lambda) & = \lambda^{1/2} (\tanh(\lambda^{1/2} R))^{-1} .
}
Therefore, the Dirichlet-to-Neumann operator has Fourier symbol $|\xi| (\tanh(R |\xi|))^{-1}$.

\subsection{Complementary operators}
\label{sec:ex:complementary}

The previous examples suggest that if the coefficient $a(t)$ corresponds to the operator $\psi(-\Delta)$, then the coefficient $b(t) = (a(t))^{-1}$ corresponds to the complementary operator $(-\Delta) (\psi(-\Delta))^{-1}$; if problems in a finite strip $t \in (0, R)$ are considered, Dirichlet and Neumann boundary conditions at $t = R$ need to be exchanged. This is indeed a case, as we will briefly show.

Let $\ph_\lambda$ be the function described by Theorem~\ref{thm:krein} in variable $t$; that is, $\ph_\lambda$ is a non-increasing non-negative function such that $\ph_\lambda(0) = 1$ and $(a(t))^{-1} (a(t) \ph_\lambda'(t))' = \lambda \ph_\lambda(t)$. We also know that $a(t) \ph_\lambda'(t)$ converges to $-\psi(\lambda)$ as $t \to 0^+$.

Let $\thet_\lambda(t) = -(\psi(\lambda))^{-1} a(t) \ph_\lambda'(t)$. Then $\thet_\lambda(0) = 1$, $\thet_\lambda(t) \ge 0$, and
\formula{
 \thet_\lambda'(t) & = -(\psi(\lambda))^{-1} (a(t) \ph_\lambda'(t))' = -\lambda (\psi(\lambda))^{-1} a(t) \ph_\lambda(t) .
}
In particular, $\thet_\lambda$ is non-increasing. Furthermore,
\formula{
 a(t) ((a(t))^{-1} \thet_\lambda'(t))' & = -(\psi(\lambda))^{-1} a(t) \ph_\lambda'(t) = \lambda \thet_\lambda(t) .
}
Therefore, $\thet_\lambda$ is the analogue of the function $\ph_\lambda$ for the coefficient $b(t) = (a(t))^{-1}$ (instead of $a(t)$). Finally,
\formula{
 (a(t))^{-1} \thet_\lambda'(t) & = -\lambda (\psi(\lambda))^{-1} \ph_\lambda(t) = \lambda (\psi(\lambda))^{-1} \thet_\lambda(t) .
}
converges to $\lambda (\psi(\lambda))^{-1}$ as $t \to 0^+$. This completes the proof our claim.

Noteworthy, the operators~\eqref{eq:oph:t} corresponding to $a(t)$ and $b(t) = (a(t))^{-1}$ are
\formula{
 & \partial_t^2 + \frac{a'(t)}{a(t)} \, \partial_t && \text{and} && \partial_t^2 - \frac{a'(t)}{a(t)} \, \partial_t ;
}
in particular, they are (formally) adjoint one to the other.

Let us state the above property in terms of variable $s$. Recall that the coefficient $a(t)$ corresponds to $A(s)$ such that $A(\sigma(t)) = (a(t))^2$. In a similar way, $b(t) = (a(t))^{-1}$ corresponds to $B(s)$ such that $B(\tau(t)) = (b(t))^2$, where $\tau(T) = \int_0^T (b(t))^{-1} dt$. It follows that
\formula{
 \int_0^{\sigma(T)} A(s) ds & = \int_0^T A(\sigma(t)) \sigma'(t) dt = \int_0^T a(t) dt = \int_0^T (b(t))^{-1} dt = \tau(T) ,
}
and similarly
\formula{
 \int_0^{\tau(T)} B(s) ds & = \sigma(T) .
}
In other words, $A([0, \sigma(T))) = \tau(T)$ and $B([0, \tau(T))) = \sigma(T)$, that is, the distribution functions $A([0, s))$ and $B([0, s))$ form a pair of inverse functions.

The above observation is a special case of a general fact in Krein's spectral theory of strings: the distribution functions of two strings $A$ and $B$ form a pair of (generalised) inverse functions if and only if the complete Bernstein functions that correspond to $A$ and $B$ are $\psi(\lambda)$ and $\lambda (\psi(\lambda))^{-1}$; see~\cite{bib:dm76}.

\subsection{A non-standard example}
\label{sec:ex:nonstandard}

An interesting class of examples is obtained by considering $\alpha \in \R$ and
\formula{
 a(t) & = (1 - t)^{1 - 2 \alpha} ,
}
where the range of $t$ is finite: $t \in (0, 1)$. Again we refer to Appendix~\ref{app:finite} for the discussion of the results of Section~\ref{sec:h} within the present context.

When $\alpha > 0$, then in addition a Dirichlet boundary condition needs to be imposed on $\ph_\lambda(t)$ at $t = 1$. It is easy to verify that
\formula{
 \ph_\lambda(t) & = \frac{(1 - t)^\alpha I_\alpha(\lambda^{1/2} (1 - t))}{I_\alpha(\lambda^{1/2})} \, , \\
 \psi(\lambda) & = \frac{\lambda^{1/2} I_{\alpha - 1}(\lambda^{1/2})}{I_\alpha(\lambda^{1/2})} \, ,
}
where $I_\alpha$ is the modified Bessel function of the first kind. This can be re-written in variable $s$ as follows. We find that if $\alpha \ne 0$, then
\formula{
 \sigma(t) & = \begin{cases} (2 \alpha)^{-1} (1 - (1 - t)^{2 \alpha}) & \text{if $\alpha \ne 0$,} \\ - \log(1 - t) & \text{if $\alpha = 0$,} \end{cases}
}
so that $s \in (0, \infty)$ if $\alpha \le 0$, while $s \in (0, (2 \alpha)^{-1})$ if $\alpha > 0$. It follows that
\formula{
 A(s) & = \begin{cases} (1 - 2 \alpha s)^{1/\alpha - 2} & \text{if $\alpha \ne 0$,} \\ e^{-2 s} & \text{if $\alpha = 0$.} \end{cases}
}
A formula for $\ph_\lambda(s)$ can be given, but it is of little use.

Observe that when $\alpha > 0$, then $((2 \alpha)^{-1} - s) A(s)$ is always integrable, and $A(s)$ is integrable if and only if $\alpha > 1$. Therefore, if $0 < \alpha < 1$, one needs to impose an additional condition that $u(s, \cdot)$ converges to $0$ in $\leb^2(\R^d)$ as $s \to ((2 \alpha)^{-1})^-$ in the definition of $\dom(\form_H)$.

%
%

\section{Variational principle}
\label{sec:var}

Let $V(x)$ be a locally bounded function on $\R^d$, bounded from below. We consider the Schr\"odinger operator $\op + V(x) = \psi(-\Delta) + V(x)$ and its quadratic form
\formula{
 \form_V(f, f) & = \form(f, f) + \int_{\R^d} V(x) (f(x))^2 dx \\
 & = \int_{\R^d} \psi(|\xi|^2) |\fourier f(\xi)|^2 d \xi + \int_{\R^d} V(x) (f(x))^2 dx ,
}
with domain $\dom(\form_V)$ equal to the set of those $f \in \dom(\form)$ for which $V(x) (f(x))^2$ is integrable. In order to use the harmonic extension technique, here we assume that $\psi$ is a complete Bernstein function.

Standard arguments show that if $\psi$ is unbounded and $V(x)$ is confining (that is, it converges to infinity as $|x| \to \infty$), then the operator $L + V(x)$ has discrete spectrum. In fact, a more general statement is true; we refer to~\cite{bib:lsw10} for further discussion.

Under the above assumptions, there is a complete orthonormal sequence of eigenfunctions $f_n$ of $\op + V(x)$, with corresponding eigenvalues $\mu_n$ arranged in a non-increasing way, and furthermore the sequence $\mu_n$ diverges to infinity.

Recall that $f$ is an eigenfunction with eigenvalue $\mu$ if and only if
\formula{
 \form_V(f, g) & = \mu \int_{\R^d} f(x) g(x) dx
}
for all $g \in \dom(\form_V)$. The eigenvalues below the essential spectrum are described by standard variational principles. For simplicity, we only state the results when $\psi$ and $V$ converge to infinity at infinity, and we denote by $f_1, f_2, \ldots$ the orthonormal sequence of eigenfunctions of $\op + V(x)$, with eigenvalues $\mu_1 \le \mu_2 \le \ldots$

\begin{theorem}
\label{thm:var}
The eigenvalues are given by the variational formula
\formula{
 \mu_n = \inf \{ \form_V(f, f) : {} & f \in \dom(\form_V), \|f\|_2 = 1, \\
 & \text{$f$ is orthogonal to $f_1, f_2, \ldots, f_{n - 1}$ in $\leb^2(\R^d)$} \} ,
}
and $f_n$ is one of the functions $f$ for which the infimum is attained. Another description is provided by the $\min$-$\max$ principle
\formula{
 \mu_n & = \inf \{ \sup \{ \form_V(f, f) : f \in \subdom, \|f\|_2 = 1 \} : \\
 & \qquad\qquad \text{$\subdom$ is an $n$-dimensional subspace of $\dom(\form_V)$} \} .
}
\end{theorem}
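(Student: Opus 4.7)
The plan is to recognise Theorem~\ref{thm:var} as the standard Courant--Fischer / min--max description of the spectrum of a semibounded self-adjoint operator with compact resolvent, and to supply the two ingredients that allow the classical theory to be invoked: the form $\form_V$ is closed and bounded below, and its form domain embeds compactly into $\leb^2(\R^d)$ under the standing hypotheses that $\psi$ and $V$ both tend to infinity.

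First I would verify that $\form_V$ is a densely defined, closed, semibounded quadratic form on $\leb^2(\R^d)$. Density follows because $\cont_c^\infty(\R^d) \sub \dom(\form_V)$: the Fourier transform of a Schwartz function is Schwartz, so $\psi(|\xi|^2)|\fourier f(\xi)|^2$ is integrable whenever $\psi$ has polynomial growth (true for complete Bernstein functions), and local boundedness of $V$ makes the potential term finite on compactly supported $f$. Semiboundedness is immediate: $\psi \ge 0$ and $V$ is bounded below by hypothesis, so $\form_V(f,f) \ge (\inf V) \|f\|_2^2$. Closedness is obtained by equipping $\dom(\form_V)$ with the graph norm $\form_V(f,f) + C \|f\|_2^2$ (with $C$ chosen so the form is positive); the Fourier part is closed because $f \mapsto \psi(|\cdot|^2)^{1/2} \fourier f$ is closed as a multiplication operator on $\leb^2$, and the potential part is closed by Fatou applied to pointwise almost-everywhere convergent subsequences. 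By the first representation theorem, $\form_V$ corresponds to a unique self-adjoint operator, which on its operator domain acts as $\op + V(x) = \psi(-\Delta) + V(x)$.

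Next I would establish that this operator has compact resolvent, which is where the \emph{confinement} hypothesis enters. The form-domain ball $\{f : \form_V(f,f) \le 1, \|f\|_2 \le 1\}$ satisfies a double tightness: tightness in physical space comes from $V(x) \to \infty$, since $\int_{|x| > R} (f(x))^2 dx \le (\inf_{|x|>R} V(x))^{-1} \form_V(f,f)$; tightness in frequency space comes from $\psi(\lambda) \to \infty$, since $\int_{|\xi|>R} |\fourier f(\xi)|^2 d\xi \le (\inf_{|\xi|>R} \psi(|\xi|^2))^{-1} \form_V(f,f)$. A standard Riesz--Fréchet--Kolmogorov argument combining these two tightness statements gives precompactness in $\leb^2(\R^d)$, so the embedding $\dom(\form_V) \hookrightarrow \leb^2(\R^d)$ is compact and hence the resolvent is compact.

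Having compact resolvent and semiboundedness, the spectral theorem produces an orthonormal basis of eigenfunctions $f_n$ with eigenvalues $\mu_n$ diverging to $\infty$. The two variational formulae are then immediate consequences of the classical min--max theorem for semibounded self-adjoint operators with purely discrete spectrum (see, e.g., Reed--Simon IV, Theorem~XIII.1--2): the first expression is the iterative Rayleigh quotient characterisation, obtained by decomposing $f = \sum c_k f_k$ and using $\form_V(f,f) = \sum \mu_k c_k^2$; the second is the Courant--Fischer identity, proved by testing the supremum on the span of $f_1,\dots,f_n$ to get the upper bound $\mu_n$, and using the fact that any $n$-dimensional $\subdom \sub \dom(\form_V)$ must contain a unit vector orthogonal to $f_1,\dots,f_{n-1}$ to get the matching lower bound.

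The main obstacle is the compact-embedding step: the estimate $\psi(|\xi|^2) \ge \psi(R^2) \to \infty$ for $|\xi| \ge R$ is delicate only insofar as one must allow $\psi$ to grow arbitrarily slowly (any unbounded complete Bernstein function is admissible), so the standard Rellich--Kondrachov argument for $-\Delta + V$ does not apply verbatim; one must argue purely via the two-sided tightness and Kolmogorov's criterion rather than via a local Sobolev embedding. Once this is in place, the rest of the argument is textbook self-adjoint spectral theory.
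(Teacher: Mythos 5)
Your proposal is correct, and it follows exactly the route the paper has in mind: the paper does not prove Theorem~\ref{thm:var} at all, but treats it as the classical Courant--Fischer/min--max theory for the semibounded form $\form_V$, remarking only that discreteness of the spectrum under the confinement assumptions follows from ``standard arguments'' (with a pointer to~\cite{bib:lsw10}). Your writeup simply supplies the standard details the authors omit --- closedness and semiboundedness of the form, compactness of the form-domain embedding via tightness in both $x$ and $\xi$, and then the textbook Rayleigh-quotient and Courant--Fischer identities --- so it is a valid filling-in of the same approach rather than a different one.
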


Let $\form_H$ be the quadratic form described in Definition~\ref{def:formh:s}, and let $\dom(\form_{H,V})$ be the space of those $u \in \dom(\form_H)$ for which $V(x) (u(0, x))^2$ is integrable. For $u \in \dom(\form_{H,V})$ define
\formula{
 \form_{H,V}(u, u) & = \form_H(u, u) + \int_{\R^d} V(x) (u(0, x))^2 dx .
}
In a similar way we define $\tilde{\form}_{H,V}$ and its domain $\dom(\tilde{\form}_{H,V})$ using the form given in Definition~\ref{def:formh:t}. By the identification of $\form$ with $\form_H$ or $\tilde{\form}_H$, we immediately have the following result.

\begin{theorem}
\label{thm:var:h}
The two variational principles of Theorem~\ref{thm:var} can be rewritten as
\formula[eq:var:h]{
\begin{aligned}
 \mu_n = \inf \{ \form_{H,V}(u, u) : {} & u \in \dom(\form_{H,V}), \|u(0, \cdot)\|_2 = 1, \\
 & \text{$u(0, \cdot)$ is orthogonal to $f_1, \ldots, f_{n - 1}$ in $\leb^2(\R^d)$} \} ,
\end{aligned}
}
and $u_n = \ext(f_n)$ is one of the functions $u$ for which the infimum is attained. We also have
\formula{
 \mu_n & = \inf \{ \sup \{ \form_{H,V}(u, u) : u \in \subdom, \|u(0, \cdot)\|_2 = 1 \} : \\
 & \hspace*{3em} \text{$\subdom$ is a subspace of $\dom(\form_{H,V})$ such that $\{u(0, \cdot) : u \in \subdom\}$ is $n$-dimensional} \} .
}
Similar expressions in terms of $\tilde{\form}_{H,V}$ are valid.
\end{theorem}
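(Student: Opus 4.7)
The plan is to deduce Theorem~\ref{thm:var:h} from the classical variational principles of Theorem~\ref{thm:var} by transporting them along the correspondence between boundary values and harmonic extensions. The single key ingredient is the combination of Theorems~\ref{thm:forms:s} and~\ref{thm:min}: for every $u \in \dom(\form_H)$ the boundary trace $f = u(0, \cdot)$ lies in $\dom(\form)$, one has $\form_H(u, u) \ge \form(f, f)$, and equality holds precisely when $u = \ext(f)$. Since the potential term $\int_{\R^d} V(x) (u(0, x))^2 dx$ depends only on the trace, this upgrades at once to the statement that whenever $u \in \dom(\form_{H,V})$ the trace $f$ lies in $\dom(\form_V)$ and
\[
 \form_{H,V}(u, u) \ge \form_V(f, f),
\]
with equality if $u = \ext(f)$; conversely, $f \in \dom(\form_V)$ implies $\ext(f) \in \dom(\form_{H,V})$ and $\form_{H,V}(\ext(f), \ext(f)) = \form_V(f, f)$.

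For the Rayleigh--Ritz description in~\eqref{eq:var:h}, denote the admissible set by $\mathcal{A}_n$. Taking $u = \ext(f_n)$ produces an element of $\mathcal{A}_n$ on which $\form_{H,V}$ equals $\form_V(f_n, f_n) = \mu_n$, so the infimum over $\mathcal{A}_n$ is at most $\mu_n$. Conversely, for any $u \in \mathcal{A}_n$ the trace $f = u(0, \cdot)$ has $\leb^2$-norm one, is orthogonal to $f_1, \ldots, f_{n - 1}$, and lies in $\dom(\form_V)$, so it is admissible for the classical principle; hence $\form_{H,V}(u, u) \ge \form_V(f, f) \ge \mu_n$.

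For the $\min$-$\max$ formula, the $\le$ direction is obtained by choosing $\subdom = \{\ext(f) : f \in \mathrm{span}(f_1, \ldots, f_n)\}$, which is $n$-dimensional because $\ext$ is injective (its trace recovers the argument) and on which $\form_{H,V}(\ext(f), \ext(f)) = \form_V(f, f)$; the supremum over unit-norm traces equals $\mu_n$ by Theorem~\ref{thm:var}. For the $\ge$ direction, fix any admissible $\subdom$ and set $\subdom_0 = \{u(0, \cdot) : u \in \subdom\}$, which is an $n$-dimensional subspace of $\dom(\form_V)$ by the opening paragraph. For each unit-norm $f \in \subdom_0$, select any lift $u_f \in \subdom$ with $u_f(0, \cdot) = f$; then
\[
 \sup\{\form_{H,V}(u, u) : u \in \subdom,\, \|u(0, \cdot)\|_2 = 1\} \ge \form_{H,V}(u_f, u_f) \ge \form_V(f, f).
\]
Taking the supremum over $f \in \subdom_0$ with $\|f\|_2 = 1$ and invoking the classical $\min$-$\max$ principle for $\subdom_0$ yields the bound $\ge \mu_n$.

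I do not expect any substantive obstacle. The only subtlety is that the trace map $u \mapsto u(0, \cdot)$ from $\subdom$ to $\subdom_0$ need not be injective — its kernel is $\subdom \cap \dom_0$ — but the normalization $\|u(0, \cdot)\|_2 = 1$ automatically discards this kernel, so each unit-norm $f \in \subdom_0$ still admits a lift $u_f \in \subdom$ contributing to the supremum. The parallel statements in terms of $\tilde{\form}_{H,V}$ follow by the identical argument, with Theorem~\ref{thm:forms:t} replacing the combination of Theorems~\ref{thm:forms:s} and~\ref{thm:min}.
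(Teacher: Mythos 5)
Your argument is correct and is essentially the paper's own proof: both rest on the identity $\form_{H,V}(\ext(f),\ext(f)) = \form_V(f,f)$ together with the inequality $\form_{H,V}(u,u) \ge \form_V(u(0,\cdot),u(0,\cdot))$ from Theorems~\ref{thm:forms:s} and~\ref{thm:min}, applied in both directions of the Rayleigh--Ritz formula. The only difference is that you write out the $\min$-$\max$ half (including the harmless non-injectivity of the trace map on $\subdom$), which the paper dismisses as ``very similar''.
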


\begin{proof}
Let $\eta_n$ denote the infimum in the right-hand side of~\eqref{eq:var:h}. By considering $u = \ext(f_n)$ and observing that $\form_{H,V}(u, u) = \form_V(f_n, f_n) = \mu_n$, we immediately see that $\mu_n \ge \eta_n$. On the other hand, for any $u$ as in the definition of $\mu_n$, we have $\form_{H,V}(u, u) \ge \form_V(f, f) \ge \mu_n$, where $f = u(0, \cdot)$, so that $\eta_n \ge \mu_n$.

The proof of the second statement is very similar.
\end{proof}

Theorem~\ref{thm:var:h} turns out to be useful for two reasons. The form $\form_H$ and the corresponding operator $\op_H$ are local, and therefore geometrical properties of functions harmonic with respect to $\op_H$ are easier to study. This is illustrated by the Courant--Hilbert nodal line theorem in Section~\ref{sec:ch}. Furthermore, it is often much simpler to evaluate $\form_H(u, u)$ for appropriate function $u$ than to calculate (or estimate) $\form(f, f)$ for the corresponding boundary value $f(x) = u(0, x)$. Since $\form_H(u, u) \ge \form(f, f)$, this method can be used to find bounds for eigenvalues $\mu_n$, as indicated in Section~\ref{sec:eig}.

%
%

\section{Courant--Hilbert nodal line theorem}
\label{sec:ch}

Throughout this section we assume that $V$ is a confining potential, and $\psi$ is an unbounded complete Bernstein function. We consider the quadratic form $\form_H$ described in Definition~\ref{def:formh:s}, as well as
\formula{
 \form_{H,V}(u, u) & = \form_H(u, u) + \int_{\R^d} V(x) (u(0, x))^2 dx
}
introduced in Section~\ref{sec:var}. In a similar way we consider $\tilde{\form}_H$ and $\tilde{\form}_{H,V}$. 

Let $f_n$ be the sequence of eigenfunctions of the operator corresponding to $\form_{H,V}$, and let $\mu_n$ be the corresponding eigenvalues, arranged in a non-decreasing way. We define $u_n = \ext(f_n)$ to be the harmonic extension of $f_n$.

We will also assume that $\exp(-t \psi(|\xi|^2))$ is integrable over $\R^d$ for any $t > 0$. Under this assumption it is known that $f_n$ is continuous on $\R^d$, and $\fourier f_n$ is integrable (see Section~\ref{sec:fk} for further discussion). Using Fourier inversion formula and dominated convergence, one easily finds that in this case $u_n$ is continuous on $\hh$.

Fix $n$ and denote by $D$ a \emph{nodal domain} of $u_n$, that is, a connected component of $\{ (s, x) \in \hh : u_n(s, x) \ne 0 \}$. Furthermore, let $v(s, x) = \ind_D(s, x) u_n(s, x)$ be the corresponding \emph{nodal part} of $u_n$.

\begin{lemma}
\label{lem:nodal}
Any nodal part $v$ of $u_n = \ext(f_n)$ is weakly differentiable, with $\nabla_{s,x} v(s, x) = \ind_D(s, x) \nabla_{s,x} u_n(s, x)$. In particular, $v \in \dom(\form_H)$.
\end{lemma}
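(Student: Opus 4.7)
My plan is to prove weak differentiability of $v$ via the ACL (absolute continuity on lines) characterisation, leveraging the fact recorded just before the lemma that $u_n$ is continuous on $\hh$. The key is that $u_n$ vanishes on $\partial D$ inside $\hh$, so cutting off with $\ind_D$ preserves continuity and does not introduce jumps along coordinate lines.

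First I would check that $v$ itself is continuous on $\hh$: since $D$ is a connected component of the open set $\{u_n \ne 0\}$ and $u_n$ is continuous, every boundary point of $D$ inside $\hh$ lies in $\{u_n = 0\}$; a case analysis at points of $D$, its exterior, and $\partial D$ (using $|v| \le |u_n|$ at the boundary) shows $v$ is continuous, hence locally integrable.

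Next comes the line-by-line argument. Using Definition~\ref{def:formh:s}, fix ACL representatives: one for $u_n$ whose restriction to almost every coordinate line is absolutely continuous with pointwise derivative equal to the weak derivative a.e., and an analogous one for $(A(s))^{1/2} u_n$ along $x$-directions. For such a line $\ell$, the set $\ell \cap D$ is relatively open and so decomposes into a countable disjoint union of open intervals $(a_i, b_i)$; by continuity of $u_n$, the function $u_n$ vanishes at every finite endpoint. Therefore $v|_\ell$ agrees with the AC function $u_n|_\ell$ on each $(a_i, b_i)$, vanishes at its endpoints, and is zero outside their union. This shows that $v|_\ell$ is absolutely continuous, with pointwise derivative equal to $\ind_D \cdot (u_n|_\ell)'$ a.e. The same reasoning, applied to $(A(s))^{1/2} u_n$ (which also vanishes on $\partial D$ whenever $A(s)$ is finite, i.e.\ for a.e.\ $s$), handles the $x$-directions. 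Invoking the ACL characterisation, $v$ is weakly differentiable in $s$, $(A(s))^{1/2} v$ is weakly differentiable in $x$, and both weak derivatives are $\ind_D$ times the corresponding weak derivatives of $u_n$, which is the asserted identity $\nabla_{s,x} v = \ind_D \nabla_{s,x} u_n$.

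Membership $v \in \dom(\form_H)$ then drops out: from the pointwise bounds $|\partial_s v| \le |\partial_s u_n|$, $A(s)|\nabla_x v|^2 \le A(s)|\nabla_x u_n|^2$ and $|v| \le |u_n|$, together with $u_n \in \dom(\form_H)$, we obtain $\form_H(v, v) \le \form_H(u_n, u_n) < \infty$ and the local integrability condition~\eqref{eq:formh:s:dom} for $v$. The only mildly delicate point is the ACL argument for the weighted function $(A(s))^{1/2} u_n$: one must check that the vanishing of $u_n$ at $\partial D$ genuinely transfers to $(A(s))^{1/2} u_n$. This is harmless because $A$ is locally integrable, hence finite at a.e.\ $s$, so almost every $x$-line sits in a horizontal slice where the weight is finite and the endpoint-vanishing used in the line argument is preserved.
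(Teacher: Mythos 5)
Your overall strategy is the same as the paper's: reduce to the ACL characterisation, work line by line, and use continuity of $u_n$ together with the fact that $u_n$ vanishes on $\partial D$ inside $\hh$. However, the central step is not justified as written. You claim that because $v|_\ell$ agrees with the absolutely continuous function $u_n|_\ell$ on each component interval $(a_i,b_i)$ of $\ell\cap D$, vanishes at the (interior) endpoints, and is zero elsewhere, it follows that $v|_\ell$ is absolutely continuous. As a general principle this is false: take a fat Cantor set with complementary intervals $(a_i,b_i)$ and on each of them a tent of height $h_i$ with $h_i\to 0$ but $\sum_i h_i=\infty$; the resulting function is continuous, absolutely continuous on every $[a_i,b_i]$, vanishes at all endpoints and off the intervals, yet is not even of bounded variation. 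So ``AC on each piece with matching boundary values'' does not glue across countably many intervals; you must use the additional information that the \emph{single} function $u_n|_\ell$ is absolutely continuous across the whole line, which is exactly what controls the sums over infinitely many components.

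This is precisely where the paper's proof does its work: given disjoint intervals $(p_j,q_j)$ of total length less than $\delta$, each interval meeting $\partial D$ is split at a point $r_j\in\partial D$, where $u_n(r_j)=0$, and then $\sum_j |v(q_j)-v(p_j)|$ is dominated by $\sum_j\bigl(|u_n(r_j)-u_n(p_j)|+|u_n(q_j)-u_n(r_j)|\bigr)$, which is small by absolute continuity of $u_n|_\ell$. Alternatively, you could repair your argument by verifying directly that $v(y)-v(x)=\int_x^y \ind_D(\cdot)\,\partial u_n$ for all $x<y$ on the line: sum over the component intervals of $\ell\cap D$ between $x$ and $y$, use $u_n=0$ at endpoints lying in $\partial D$, and note that the sum converges absolutely because $\partial u_n$ is locally integrable on $\ell$; this yields both absolute continuity and the formula for the derivative in one stroke. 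The rest of your proposal (continuity of $v$, the treatment of the weighted derivative $(A(s))^{1/2}u_n$ on horizontal lines for a.e.\ $s$, the pointwise bounds giving $\form_H(v,v)\le\form_H(u_n,u_n)$ and condition~\eqref{eq:formh:s:dom}) is fine; only note that for vertical lines a component of $\ell\cap D$ may have an endpoint at $s=0$, where $u_n$ need not vanish, which is harmless since absolute continuity is only required locally on the open half-line, but your phrase ``vanishes at every finite endpoint'' should be restricted to endpoints lying in $\hh$.
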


\begin{proof}
The result follows from the ACL characterisation of weak differentiability in a rather standard way. The details are, however, somewhat technical, and therefore we outline the proof.

After a modification on a set of zero Lebesgue measure, $u_n$ has the ACL property; since $u_n$ is already continuous, in fact no modification is needed. Fix a line on which $u_n$ is absolutely continuous. We will argue that from the definition of absolute continuity it follows that $v = \ind_D u_n$ is also absolutely continuous on this line.

Fix $\eps > 0$ and choose $\delta > 0$ according to the definition of absolute continuity of $u_n$ on the chosen line. Consider a collection of mutually disjoint intervals $(p_j, q_j)$ of total length less then $\delta$, and replace any interval $(p_j, q_j)$ which intersects $\partial D$ by two sub-intervals $(p_j, r_j)$ and $(r_j, q_j)$, for arbitrary $r_j \in (p_j, q_j) \cap \partial D$. For notational convenience, set $r_j = q_j$ if $(p_j, q_j)$ has no common point with $\partial D$. Then the sum of $|v(p_j) - v(q_j)|$ is easily shown not to exceed the sum of $|u_n(p_j) - u_n(r_j)| + |u_n(r_j) - u_n(q_j)|$. Thus, $v$ is absolutely continuous on the line considered.

Furthermore, if $\partial$ stands for the derivative along the chosen line, $\partial v(p) = \partial u_n(p)$ for $p \in D$ and $\partial v(p) = 0$ if $p \notin \overline{D}$. Suppose now that $p \in \partial D$. If there is a sequence of points $p_n \notin \overline{D}$ on the chosen line convergent to $p$, then $\partial v(p) = 0$. Finally, there is only a countable number of points $p \in \partial D$ on the chosen line for which such a sequence $p_n$ does not exist. Thus, $\partial v(p) = \partial u_n(p) \ind_D(p)$ for almost all $p$ on the chosen line.

It follows that $v$ has the ACL property, with $\nabla_{s,x} v(s, x) = \ind_D(s, x) \nabla_{s,x} u_n(s, x)$ almost everywhere. The desired result follows by the ACL characterisation of weak differentiability.
\end{proof}

Let $v$ be a nodal part of $u_n$, and let $g(x) = v(0, x)$. (Note that $g$ need not be a nodal part of $f_n$). Since $f_n$ is an eigenfunction, we have $\form_V(f_n, g) = \mu_n \int_{\R^d} f_n(x) g(x) dx$, that is,
\formula{
 \form_{H,V}(u_n, v) & = \mu_n \int_{\R^d} u_n(0, x) v(0, x) dx .
}
However, $u_n(s, x) v(s, x) = (v(s, x))^2$, and $\nabla_{s,x} u_n(s, x) \cdot \nabla_{s,x} v(s, x) = |\nabla_{s,x} v(s, x)|^2$ (the latter equality holds almost everywhere). Therefore,
\formula[eq:nodal:form]{
 \form_{H,V}(v, v) & = \form_{H,V}(u_n, v) = \mu_n \int_{\R^d} u_n(0, x) v(0, x) dx = \mu_n \int_{\R^d} (v(0, x))^2 dx .
}

The weak Courant--Hilbert theorem holds in full generality. For a strong version, we need the unique continuation property.

\begin{theorem}[a special case of Theorem~17.2.6 in~\cite{bib:h85}]
\label{thm:ucp}
If $A(s)$ is positive and locally Lipschitz continuous, $f \in \dom(\form)$ and $u = \ext(f)$, then the set $\{(s, x) \in \hh : u(s, x) = 0\}$ has either zero of full Lebesgue measure.
\end{theorem}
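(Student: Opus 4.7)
The plan is to reduce Theorem~\ref{thm:ucp} to a direct application of Hörmander's strong unique continuation result, Theorem~17.2.6 in~\cite{bib:h85}, which asserts that for a second-order differential operator with real, elliptic principal symbol and Lipschitz principal coefficients (and no singular lower-order terms), any solution vanishing on a set of positive Lebesgue measure in a connected open set must vanish identically on that set. The goal is thus reduced to checking that $u=\ext(f)$ satisfies the hypotheses of that theorem on the connected open set $\hh$.

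First, I would identify the operator $\op_H = \partial_s^2 + A(s)\Delta_x$ acting on functions defined on $\hh = (0,\infty)\times\R^d$ and verify the structural assumptions. The principal symbol at a point $(s,x)\in\hh$ is $\tau^2 + A(s)|\xi|^2$, which, under the assumption that $A(s)$ is positive and locally Lipschitz continuous, is uniformly elliptic on every compact subset of $\hh$. The only non-constant principal coefficient is $A(s)$; it is locally Lipschitz by hypothesis, and there are no lower-order terms. By Theorem~\ref{thm:harm}, $u=\ext(f)$ is a weak solution of $\op_H u = 0$ in $\hh$, so the PDE hypothesis of Hörmander's theorem is satisfied in the distributional sense.

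Second, I would promote $u$ to the regularity class in which the unique continuation principle is formulated. Standard interior elliptic regularity for operators with Lipschitz principal coefficients gives $u\in H^2_{\loc}(\hh)$; in fact, since the coefficients depend only on $s$, one can take the Fourier transform in $x$ and use the explicit representation $\fourier_x u(s,\xi)=\ph(|\xi|^2,s)\fourier f(\xi)$ to show smoothness in $x$ and the required interior regularity in $s$ on any compact $K\subset\hh$. This is enough to invoke Theorem~17.2.6 in~\cite{bib:h85} at every point of $\hh$.

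Finally, suppose the zero set $Z=\{(s,x)\in\hh:u(s,x)=0\}$ has positive Lebesgue measure. Then some open ball $B\subset\hh$ contains $Z\cap B$ of positive measure, and Hörmander's theorem forces $u\equiv 0$ on $B$. A standard propagation-of-zeros argument (covering any compact set in $\hh$ by a chain of balls, and iterating the local unique continuation statement) then shows that $u\equiv 0$ on the whole connected set $\hh$, so $Z$ has full Lebesgue measure in $\hh$. The main obstacle I expect is not the conceptual step — the result is essentially an off-the-shelf application — but verifying precisely the regularity hypotheses of Hörmander's theorem; in particular, one must ensure that $u$ has enough $s$-regularity up to scales where the Carleman estimates underlying Theorem~17.2.6 apply, which is where the Lipschitz hypothesis on $A(s)$ enters in an essential way and cannot be weakened (this is also why the theorem explicitly rules out the boundary $\{s=0\}$, where $A$ may behave badly).
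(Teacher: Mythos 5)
Your proposal is correct and takes essentially the same route as the paper: Theorem~\ref{thm:ucp} is given there with no proof beyond the citation of Hörmander's Theorem~17.2.6, and your verification of its hypotheses (local uniform ellipticity of $\partial_s^2 + A(s)\Delta_x$ on compact subsets of $\hh$ from positivity and continuity of $A$, locally Lipschitz principal coefficients, interior $H^2_{\loc}$ regularity of $u=\ext(f)$ via the Fourier representation, and connectedness of $\hh$ with the usual chaining of balls) is exactly the routine content behind that citation.
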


Note that $A(s)$ satisfies the assumptions of the above theorem if and only if the corresponding coefficient $a(t)$ is positive and locally Lipschitz continuous, for either condition implies that both $\sigma$ and the inverse function $\sigma^{-1}$ are locally $\cont^1$.

Noteworthy, there is no simple condition on the complete Bernstein function $\psi(\xi)$ which implies that the corresponding coefficient $A(s)$ is Lipschitz continuous. This condition is, however, satisfied for all examples in Section~\ref{sec:ex}.

The proof of the following Courant--Hilbert-type result is standard. For completeness, we provide the details.

\begin{theorem}
\label{thm:ch}
Suppose that $\exp(-t \psi(|\xi|^2))$ is integrable over $\R^d$ for any $t > 0$. The number of nodal parts of the harmonic extension $u_n = \ext(f_n)$ of the $n$-th eigenfunction does not exceed:
\begin{enumerate}[label={\rm (\alph*)}]
\item $\max \{j \in \N : \mu_j = \mu_n\}$ in the general case (weak version);
\item $\min \{j \in \N : \mu_j = \mu_n\}$ if $A(s)$ is positive and locally Lipschitz (strong version).
\end{enumerate}
\end{theorem}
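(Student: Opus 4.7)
The proof will follow the classical Courant--Hilbert strategy, adapted to the bilinear form $\form_{H,V}$ via the key identity \eqref{eq:nodal:form}. Let $v_1,\ldots,v_k$ enumerate the nodal parts of $u_n$, with corresponding nodal domains $D_1,\ldots,D_k$. By Lemma~\ref{lem:nodal} each $v_j \in \dom(\form_H)$, and since $v_j(0,x) = f_n(x)\ind_{D_j(0)}(x)$ while $f_n \in \dom(\form_V)$, also $v_j \in \dom(\form_{H,V})$. Since the $D_j$ are mutually disjoint subsets of $\hh$, the products $\partial_s v_i\,\partial_s v_j$ and $\nabla_x v_i\cdot\nabla_x v_j$ vanish almost everywhere for $i\ne j$; similarly the traces $v_j(0,\cdot)$ have pairwise disjoint supports in $\R^d$. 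Combining this with \eqref{eq:nodal:form} gives, for any coefficients $c_1,\ldots,c_k \in \R$ and $v = \sum_j c_j v_j$,
\formula[eq:ch:key]{
 \form_{H,V}(v,v) & = \sum_j c_j^2 \form_{H,V}(v_j,v_j) = \mu_n \sum_j c_j^2 \|v_j(0,\cdot)\|_2^2 = \mu_n \|v(0,\cdot)\|_2^2 .
}

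Next, I would verify that every nodal part has non-trivial trace. If $v_j(0,\cdot) = 0$, then \eqref{eq:nodal:form} forces $\form_H(v_j,v_j) = 0$; in particular $\partial_s v_j = 0$ almost everywhere, so by \eqref{eq:u:int} we obtain $v_j(s,\cdot) = v_j(0,\cdot) = 0$ in $\leb^2(\R^d)$ for every $s\ge 0$, contradicting $v_j = u_n \ne 0$ on $D_j$. Consequently the traces $v_1(0,\cdot),\ldots,v_k(0,\cdot)$ are nonzero with pairwise disjoint supports, hence linearly independent in $\leb^2(\R^d)$.

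For the weak version (a), let $N = \max\{j : \mu_j = \mu_n\}$ and suppose for contradiction that $k \ge N+1$. The $N$ orthogonality conditions $\scalar{v(0,\cdot),f_i}_{\leb^2} = 0$ for $i=1,\ldots,N$ leave a nontrivial solution among $(c_1,\ldots,c_{N+1})$. Linear independence of the traces forces $v(0,\cdot)\ne 0$. By Theorem~\ref{thm:min} applied to $\form_H$ and the variational principle in Theorem~\ref{thm:var:h},
\formula{
 \form_{H,V}(v,v) & \ge \form_V(v(0,\cdot),v(0,\cdot)) \ge \mu_{N+1} \|v(0,\cdot)\|_2^2 .
}
Combined with \eqref{eq:ch:key} this yields $\mu_n \ge \mu_{N+1}$, contradicting $\mu_{N+1} > \mu_n$ by maximality of $N$.

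For the strong version (b), let $M = \min\{j : \mu_j = \mu_n\}$ and suppose $k \ge M+1$. Imposing $v(0,\cdot)\perp f_1,\ldots,f_{M-1}$ gives $M-1$ conditions on $M+1$ coefficients, so the solution space is at least two-dimensional; I can therefore select a nontrivial solution which \emph{in addition} satisfies $c_1 = 0$. The same argument as before shows $v(0,\cdot)\ne 0$ and produces the chain
\formula{
 \mu_n\|v(0,\cdot)\|_2^2 = \form_{H,V}(v,v) \ge \form_V(v(0,\cdot),v(0,\cdot)) \ge \mu_M \|v(0,\cdot)\|_2^2 = \mu_n\|v(0,\cdot)\|_2^2 ,
}
so every inequality is an equality. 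The equality in Theorem~\ref{thm:min} forces $v = \ext(v(0,\cdot))$. Now invoke the unique continuation property (Theorem~\ref{thm:ucp}): the zero set $\{v = 0\}$ must have either zero or full Lebesgue measure. Since $v(0,\cdot)\ne 0$ the latter is excluded, while $v \equiv 0$ on the nodal domain $D_1$ of positive measure excludes the former. This contradiction completes the argument. The main delicate point is the two-dimensional selection that allows imposing the extra vanishing condition $c_1 = 0$; without it the unique continuation step cannot be triggered.
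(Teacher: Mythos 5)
Your proposal is correct and follows essentially the same route as the paper: disjointness of the nodal parts together with the identity \eqref{eq:nodal:form}, the variational principle, the equality case of Theorem~\ref{thm:min}, and the unique continuation property of Theorem~\ref{thm:ucp} for the strong version. The only differences are cosmetic: you spell out the non-vanishing of the traces $v_j(0,\cdot)$ (which the paper only asserts parenthetically via $\form_{H,V}(v,v)>0$), and in the strong case you close a chain of equalities directly, using the extra condition $c_1=0$ on $M+1$ nodal parts, where the paper instead combines only the first $N$ nodal parts and identifies $v(0,\cdot)$ as a $\mu_n$-eigenfunction through its eigenfunction expansion before invoking Theorems~\ref{thm:min} and~\ref{thm:ucp}.
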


\begin{proof}
Both statements are proved by a very similar argument: let $N = \min \{j \in \N : \mu_j > \mu_n\}$ for the weak version, $N = \min \{j \in \N : \mu_j = \mu_n\}$ for the strong version. Suppose that $u_n$ has at least $N$ nodal parts $v_1, v_2, \ldots, v_N$. Let $v = \sum_{j = 1}^N \alpha_j v_j$ be a non-zero linear combination of these nodal parts such that $v(0, \cdot)$ is orthogonal to $f_1, f_2, \ldots, f_{N - 1}$.

By Lemma~\ref{lem:nodal}, $v_j \in \dom(\form_{H,V})$, and by~\eqref{eq:nodal:form},
\formula{
 \form_{H,V}(v_j, v_j) & = \mu_n \int_{\R^d} (v_j(0, x))^2 dx .
}
Furthermore, if $i \ne j$, then $v_i(0, x) v_j(0, x)$ and $\nabla v_i(s, x) \cdot \nabla v_j(s, x)$ are equal to zero almost everywhere (again by Lemma~\ref{lem:nodal}), and therefore $\int_{\R^d} v_i(0, x) v_j(0, x) dx = 0$ and $\form_{H,V}(v_i, v_j) = 0$. It follows that $v \in \dom(\form_{H,V})$ and
\formula[eq:nodal:form:norm]{
 \form_{H,V}(v, v) & = \sum_{j = 1}^N \form_{H,V}(v_j, v_j) = \sum_{j = 1}^N \mu_n \int_{\R^d} (v_j(0, x))^2 dx = \mu_n \int_{\R^d} (v(0, x))^2 dx .
}
On the other hand, $v(0, x)$ is orthogonal to $f_1, f_2, \ldots, f_{N - 1}$, and so $v$ can be written as an orthogonal sum $\sum_{j = N}^\infty \beta_j u_j$. We now consider the weak and strong versions separately.

For the weak version, we have $\mu_N > \mu_n$. Thus,
\formula{
 \form_{H,V}(v, v) & = \sum_{j = 1}^\infty \mu_j |\beta_j|^2 \ge \mu_N \sum_{j = N}^\infty |\beta_j|^2 = \mu_N \int_{\R^d} (v(0, x))^2 dx ,
}
a contradiction with~\eqref{eq:nodal:form:norm} (for necessarily $\form_{H,V}(v, v) > 0$). Therefore, $u_n$ has less than $N$ nodal parts, as desired.

For the strong version, let $M = \min \{ j \in \N : \mu_j > \mu_n \}$. By~\eqref{eq:nodal:form:norm},
\formula{
 0 & = \form_{H,V}(v, v) - \mu_n \int_{\R^d} (v(0, x))^2 dx = \sum_{j = N}^\infty (\mu_j - \mu_n) |\beta_j|^2 = \sum_{j = M}^\infty (\mu_j - \mu_n) |\beta_j|^2 .
}
Thus, $\beta_j = 0$ for all $j \ge M$, and so $g(x) = v(0, x)$ is a linear combination of eigenfunctions $f_N, f_{N + 1}, \ldots, f_{M - 1}$, all corresponding to the same eigenvalue~$\mu_n$. It follows that $g$ is itself an eigenfunction corresponding to the eigenvalue $\mu_n$, and so $\form_V(g, g) = \mu_n \|g\|_2^2 = \form_{H,V}(v, v)$. By Theorem~\ref{thm:min}, we have $v = \ext(g)$, and therefore, by Theorem~\ref{thm:ucp}, the set $\{(s, x) \in \hh : v(s, x) = 0\}$ has zero Lebesgue measure. However, $v$ is a linear combination of the nodal parts $v_1, v_2, \ldots, v_N$ of $u_n$. If $u_n$ had another nodal part, the set $\{(s, x) \in \hh : v(s, x) = 0\}$ would contain the corresponding nodal domain and thus would have non-empty interior. Therefore, $u_n$ has no more than $N$ nodal parts, as desired.
\end{proof}

As remarked in the introduction, a variant of Theorem~\ref{thm:ch} can be given for radial functions when the underlying potential is a radial functions. The proof of the following result is very similar to the proof of Theorem~\ref{thm:ch}, and thus it is omitted.

\begin{theorem}\label{thm:ch:rad}
Suppose that $V(x)$ is a radial confining potential and that $\exp(-t \psi(|\xi|^2))$ is integrable over $\R^d$ for any $t > 0$. Let $\mu_{\rad,n}$ denote the non-decreasing sequence of eigenvalues of $\psi(-\Delta) + V(x)$ that correspond to radial eigenfunctions. Let $u_{\rad,n}(t, |x|)$ be the harmonic extension $\ext(f_{\rad,n})$. Then the number of nodal parts of $u_n$ on $(0, \infty) \times (0, \infty)$ does not exceed:
\begin{enumerate}[label={\rm (\alph*)}]
\item $\max \{j \in \N : \mu_j = \mu_n\}$ in the general case (weak version);
\item $\min \{j \in \N : \mu_j = \mu_n\}$ if $A(s)$ is positive and locally Lipschitz (strong version).
\end{enumerate}
In particular, the number of nodal parts of the profile function of $f_{\rad,n}$ on $(0, \infty)$ does not exceed $2 n - 1$.
\end{theorem}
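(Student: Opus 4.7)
The plan is to mirror the proof of Theorem~\ref{thm:ch} with every function space restricted to $x$-radial functions, and then to deduce the profile bound by a planar topological argument in the quarter-plane $(0,\infty)\times(0,\infty)$.

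First, I would observe that since $\psi(|\xi|^2)$ is a radial Fourier symbol and $V$ is radial, the operator $\op+V(x)$ commutes with rotations of $\R^d$ and hence preserves $\leb^2_{\rad}(\R^d)$, so its restriction has the asserted discrete spectrum $\mu_{\rad,n}$ with orthonormal radial eigenfunctions $f_{\rad,n}$. Because $\fourier$ also commutes with rotations, $\ext$ maps radial functions to functions on $\hh$ that are radial in $x$; writing $u_{\rad,n}(s,x)=U_n(s,|x|)$ gives a bijection between the nodal parts of $u_{\rad,n}$ on $\hh$ and those of $U_n$ on $(0,\infty)\times(0,\infty)$. Restricted to radial test functions, $\form_H$ becomes, up to the positive constant $|\mathbb{S}^{d-1}|$, the local weighted form
\formula{
 \form_H^{\rad}(U,U) & = \int_0^\infty \int_0^\infty \bigl((\partial_s U)^2 + A(s)(\partial_r U)^2\bigr) r^{d-1} \, dr \, ds ,
}
with an analogous weighted formula for $\form_{H,V}$, and the variational principle of Theorem~\ref{thm:var:h} restricts verbatim to the radial subspace.

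Next, I would transfer the proof of Theorem~\ref{thm:ch} line by line. Lemma~\ref{lem:nodal} is purely local and blind to the weight $r^{d-1}$, so each nodal part of $U_n$ on $(0,\infty)^2$ lies in the weighted form domain, with the weak gradient given by the restriction formula. Setting $N=\max\{j\in\N:\mu_{\rad,j}=\mu_{\rad,n}\}$ in the weak case, or $N=\min\{j\in\N:\mu_{\rad,j}=\mu_{\rad,n}\}$ in the strong case, a hypothetical $(N+1)$-st nodal part would produce a nonzero linear combination $V$ of nodal parts of $U_n$ whose radial boundary trace is orthogonal in $\leb^2_{\rad}$ to $f_{\rad,1},\ldots,f_{\rad,N-1}$ and satisfies $\form_{H,V}(V,V)=\mu_{\rad,n}\|V(0,\cdot)\|_2^2$ by the analogue of~\eqref{eq:nodal:form}. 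Expanding $V(0,\cdot)$ in the radial eigenbasis yields the desired contradiction in the weak case; in the strong case, the same expansion forces $V(0,\cdot)$ to be a genuine eigenfunction, and Theorem~\ref{thm:ucp} (applicable thanks to the local Lipschitz regularity of $A(s)$) then rules out any additional nodal parts.

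For the final profile bound, I would argue topologically in the quarter-plane. Label the nodal intervals of $f_{\rad,n}$ along $(0,\infty)$ as $I_1,I_2,\ldots,I_m$ with strictly alternating signs. By continuity, each $I_j$ is contained in the boundary trace of exactly one nodal part $D_j$ of $U_n$ on $(0,\infty)^2$. If $D_j=D_{j'}$ for some $j<j'$, a Jordan arc inside $D_j$ joining small vertical lifts of points in $I_j$ and $I_{j'}$, together with the two short vertical segments and the boundary segment of $\{0\}\times(0,\infty)$ between them, bounds a region $R\subset(0,\infty)^2$; any nodal part $D_{j''}$ with $j<j''<j'$ and $D_{j''}\ne D_j$ is confined to $R$, since it is disjoint from $D_j$ and hence cannot cross the arc or the vertical segments. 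Consequently the occurrence positions of any two distinct labels among $D_1,\ldots,D_m$ do not interleave, so the partition of $\{1,\ldots,m\}$ into equal-label classes is non-crossing; together with the fact that consecutive labels differ (opposite signs), this forces $(D_1,\ldots,D_m)$ to be realisable as an Euler-tour sequence on a rooted tree with $k$ nodes, and any such tour has length at most $2k-1$. Taking $k\le n$ yields $m\le 2n-1$. The hardest step will be the planar topology argument: rigorously establishing the non-interleaving property using a Jordan-curve-type separation in the closed quarter-plane, with careful handling of the nodal parts' behaviour near the boundary $r$-axis. Once that structural property is in hand, the Euler-tour bound is a routine induction on $k$.
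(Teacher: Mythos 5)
For parts (a) and (b) your plan --- restrict everything to the rotation-invariant subspace, note that $\ext$ maps radial data to functions radial in $x$, and rerun the proof of Theorem~\ref{thm:ch} (Lemma~\ref{lem:nodal}, the identity~\eqref{eq:nodal:form}, the eigenbasis expansion of the boundary trace, and Theorem~\ref{thm:ucp} in the strong case) --- is exactly the argument the paper has in mind; it omits the proof precisely because it is this routine transfer, and your version of it is correct. The only point worth spelling out is that the boundary trace of your combination of nodal parts is radial, so its expansion involves only the radial eigenfunctions and hence only the eigenvalues $\mu_{\rad,j}$; you do say this implicitly via the restricted form, and it is fine.

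The genuine gap is in the last step, the bound $2n-1$ for the profile of $f_{\rad,n}$. Your Jordan-curve argument for the non-interleaving property is sound in outline (and the acknowledged boundary details are routine), and the Davenport--Schinzel/Euler-tour count from non-crossing labels with no immediate repetition does give length at most $2k-1$. But the hypothesis ``no immediate repetition'' rests on your unproven assertion that the nodal intervals $I_1,\dots,I_m$ of the profile have \emph{strictly alternating signs}. A continuous function can perfectly well have two adjacent nodal intervals of the same sign separated by an isolated zero; in that case both intervals may attach to the \emph{same} nodal part of $U_n$, the label sequence contains a repetition $D,D,\dots$, and the non-crossing property then gives no bound at all (one positive nodal domain upstairs could a priori trace arbitrarily many same-sign intervals downstairs). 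So without ruling out isolated zeros of the profile at which there is no sign change, your combinatorial step collapses. Closing this requires an additional analytic input that you do not supply: for instance, a Hopf-type boundary point lemma for the degenerate operator $\partial_s^2+A(s)\Delta_x$ at $s=0$, combined with the boundary relation $-\partial_s u_{\rad,n}(0,\cdot)=(\mu_{\rad,n}-V)f_{\rad,n}$, to show that the extension cannot be strictly of one sign in a half-disc around a boundary zero of $f_{\rad,n}$; this is nontrivial in the stated generality (in the weak version $A$ is only locally integrable), and even in the Lipschitz case it needs to be stated and justified, e.g.\ along the lines of the corresponding lemmas in~\cite{bib:fl13,bib:fls15}. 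Note also that your final inequality uses $k\le n$, i.e.\ the strong (min-index) bound, so the $2n-1$ conclusion should be tied to the hypotheses of part (b).
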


We remark that an analogous statement is true for eigenvalues that correspond to eigenfunctions of the form $f(|x|) v(x)$ for any given solid harmonic polynomial $v(x)$ (for example, $v(x) = x_j$, $v(x) = x_i^2 - x_j^2$ or $v(x) = x_i x_j$, where $i \ne j$). For a detailed discussion, we refer to Section~2.1 in~\cite{bib:dkk17} or Appendix~C.3 in~\cite{bib:fls15}.

%
%

\section{Estimates of eigenvalues}
\label{sec:eig}

We continue to assume that $V(x)$ is a confining potential, and that $\psi$ is an unbounded complete Bernstein function. Our goal in this section is to compare the eigenvalues $\mu_n$ of $\op + V(x) = \psi(-\Delta) + V(x)$ with the eigenvalues $\lambda_n$ of $-\Delta + \gamma V(x)$ for a suitable $\gamma > 0$. This is achieved by constructing appropriate test functions and inserting them into the variational formula.

Throughout this section we write $\tscalar{V f, f}$ for $\int_{\R^d} V(x) |f(x)|^2 dx$.

Let $\gamma > 0$ and $\lambda > 0$. Let $f$ be weakly differentiable with $\nabla f \in \leb^2(\R^d)$ and $\|f\|_2 = 1$, and let $\eta = \|\nabla f\|_2^2 + \gamma \tscalar{V f, f}$. Let $u(s, x) = \ph_\lambda(s) f(x)$, where $\ph_\lambda$ is the function described by Theorem~\ref{thm:krein}. Clearly, $u \in \dom(\form_H)$ and
\formula{
 \form_H(u, u) & = \int_0^\infty \bigl( (\ph_\lambda'(s))^2 \tnorm{f}_2^2 + A(s) ((\ph_\lambda(s))^2 \tnorm{\nabla f}_2^2\bigr) dt .
}
Recall that $\tnorm{f}_2^2 = 1$ and $\tnorm{\nabla f}_2^2 = \eta - \gamma \tscalar{V f, f}$. Thus,
\formula{
 \form_H(u, u) & = \int_0^\infty \bigl((\ph_\lambda'(s))^2 + (\eta - \gamma \tscalar{V f, f}) A(s) (\ph_\lambda(s))^2\bigr) ds .
}
By equality in~\eqref{eq:ph:min},
\formula{
 \form_H(u, u) & = \psi(\lambda) + (\eta - \lambda - \gamma \tscalar{V f, f}) \int_0^\infty A(s) (\ph_\lambda(s))^2 ds .
}
Suppose that $\lambda \ge \eta$ and that
\formula[eq:condition]{
 \gamma \int_0^\infty A(s) (\ph_\lambda(s))^2 dt & \ge 1 .
}
Then it follows that
\formula{
 \form_{H,V}(u, u) & = \form_H(u, u) + \tscalar{V f, f} \le \psi(\lambda) .
}
This essentially proves the following comparison result.

\begin{theorem}\label{thm:est}
Suppose that $V$ is a nonnegative confining potential and $\lambda > 0$. Choose
\formula{
 \gamma & = \expr{\int_0^\infty A(s) (\ph_\lambda(s))^2 ds}^{-1} ,
}
and let $\lambda_n$ be the non-decreasing sequence of eigenvalues of $-\Delta + \gamma V$. Let $n$ be the greatest index such that $\lambda_n \le \lambda$. Then the $n$-th smallest eigenvalue $\mu_n$ of $\psi(-\Delta) + V(x)$ is not greater than $\psi(\lambda)$.
\end{theorem}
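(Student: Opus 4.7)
The precomputation preceding the theorem already contains all the analytic content: for any $f \in \leb^2(\R^d)$ with $\tnorm{f}_2 = 1$, $\nabla f \in \leb^2(\R^d)$, and $\eta := \tnorm{\nabla f}_2^2 + \gamma \tscalar{V f, f} \le \lambda$, the function $u(s, x) = \ph_\lambda(s) f(x)$ lies in $\dom(\form_{H,V})$ and satisfies $\form_{H,V}(u, u) \le \psi(\lambda)$, the specific choice of $\gamma$ in the theorem being precisely what makes condition~\eqref{eq:condition} hold (with equality). The plan is to upgrade this single-function statement to an $n$-dimensional family of test functions and then invoke the $\min$-$\max$ formulation in Theorem~\ref{thm:var:h}.

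First I would select an orthonormal system $f_1, \ldots, f_n \in \leb^2(\R^d)$ of eigenfunctions of $-\Delta + \gamma V(x)$ corresponding to the eigenvalues $\lambda_1 \le \ldots \le \lambda_n \le \lambda$, define $u_j(s, x) = \ph_\lambda(s) f_j(x)$, and let $\subdom$ be the linear span of $u_1, \ldots, u_n$. Because the traces $u_j(0, \cdot) = f_j$ are orthonormal, the set $\{u(0, \cdot) : u \in \subdom\}$ is genuinely $n$-dimensional, as~\eqref{eq:var:h} requires, and each $u_j$ belongs to $\dom(\form_{H,V})$ by local boundedness of $V$ and the regularity of $\ph_\lambda$ from Theorem~\ref{thm:krein}.

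The substantive step is to check that the precomputation applies uniformly over $\subdom$. For an arbitrary $f = \sum_{j=1}^n c_j f_j$ with $\sum |c_j|^2 = 1$, orthogonality of $f_1, \ldots, f_n$ with respect to the quadratic form of $-\Delta + \gamma V(x)$ yields
\formula{
 \tnorm{\nabla f}_2^2 + \gamma \tscalar{V f, f} & = \sum_{j=1}^n |c_j|^2 \lambda_j \le \lambda_n \le \lambda ,
}
so the precomputation produces $\form_{H,V}(u, u) \le \psi(\lambda)$ for the associated $u = \sum c_j u_j = \ph_\lambda \cdot f$. Feeding $\subdom$ into the $\min$-$\max$ part of Theorem~\ref{thm:var:h} then delivers $\mu_n \le \psi(\lambda)$, as asserted.

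There is no genuine obstacle beyond bookkeeping: one only needs to verify that the form-level orthogonality identity above is legitimate for eigenfunctions of a confining, locally bounded potential Schrödinger operator (standard, as the $f_j$ lie in the form domain), and that $\subdom$ is an admissible test subspace in~\eqref{eq:var:h}. No new analytic input beyond what is already done inline before the theorem is needed.
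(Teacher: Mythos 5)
Your proposal is correct and follows essentially the same route as the paper: take the span of the first $n$ eigenfunctions of $-\Delta + \gamma V$, multiply by $\ph_\lambda(s)$ to form the test subspace, use the inline computation preceding the theorem to bound $\form_{H,V}(u,u)$ by $\psi(\lambda)$ uniformly on that subspace, and conclude via the min--max principle of Theorem~\ref{thm:var:h}. The paper's proof is just a terser version of the same argument, leaving the form-orthogonality bookkeeping implicit.
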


\begin{proof}
If $f$ is in the linear span of the first $n$ eigenfunctions of $-\Delta + \gamma V$, then $\eta = \|\nabla f\|_2^2 + \gamma \tscalar{V f, f}$ satisfies $\eta \le \lambda_n \le \lambda$. Therefore, by the discussion preceding the statement of the theorem, the function $u(s, x) = \ph_\lambda(s) f(x)$ satisfies
\formula{
 \form_{H,V}(u, u) & \le \psi(\lambda) .
}
From the min-max variational characterisation of $\mu_n$, we conclude that $\mu_n \le \psi(\lambda)$.
\end{proof}


Apparently, the above theorem is new even for the fractional Laplace operator. In this case
\formula{
 a(t) & = \frac{2^{\alpha - 1} \Gamma(\tfrac{\alpha}{2})}{\Gamma(1 - \tfrac{\alpha}{2})} \, t^{1 - \alpha} , \\
 \ph_\lambda(t) & = \frac{2^{1 - \alpha/2}}{\Gamma(\tfrac{\alpha}{2})} \, (\sqrt{\lambda} t)^{\alpha/2} K_{\alpha/2}(\sqrt{\lambda} t) ,
}
and
\formula{
 \int_0^\infty a(t) (\ph_\lambda(t))^2 dt & = \frac{2^{\alpha/2} \lambda^{\alpha/2 - 1/2}}{\Gamma(1 - \tfrac{\alpha}{2})} \int_0^\infty (\sqrt{\lambda} t)^{1 - \alpha/2} K_{\alpha/2}(\sqrt{\lambda} t) dt \\
 & = \frac{2^{\alpha/2} \lambda^{\alpha/2 - 1}}{\Gamma(1 - \tfrac{\alpha}{2})} \int_0^\infty r^{1 - \alpha/2} K_{\alpha/2}(r) dr = \lambda^{\alpha/2 - 1} ,
}
see formula~6.561.16 in~\cite{bib:gr07}.

\begin{corollary}\label{cor:est:fr}
Suppose that $V$ is a nonnegative confining potential and $\lambda > 0$. Let $\lambda_n$ be the non-decreasing sequence of eigenvalues of $-\Delta + \lambda^{1 - \alpha/2} V$. Let $n$ be the greatest index such that $\lambda_n \le \lambda$. Then the $n$-th smallest eigenvalue $\mu_n$ of $(-\Delta)^{\alpha/2} + V(x)$ is not greater than $\lambda^{\alpha/2}$.
\end{corollary}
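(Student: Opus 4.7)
The plan is to apply Theorem~\ref{thm:est} directly in the Caffarelli--Silvestre case, where $\psi(\lambda) = \lambda^{\alpha/2}$ and the corresponding Krein coefficient $a(t) = \tfrac{2^{\alpha-1}\Gamma(\alpha/2)}{\Gamma(1-\alpha/2)}\,t^{1-\alpha}$ was identified in Section~\ref{sec:ex:cs} (the normalization constant agrees with the $\alpha^{-1} c_\alpha$ of that section via the reflection identity $|\Gamma(-\alpha/2)| = \tfrac{2}{\alpha}\Gamma(1-\alpha/2)$). The only nontrivial ingredient required is the explicit value of $\int_0^\infty a(t)(\ph_\lambda(t))^2 dt$, which has already been computed just above the corollary using the Bessel integral identity (Gradshteyn--Ryzhik 6.561.16) and equals $\lambda^{\alpha/2 - 1}$.

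Concretely, I would first note that the change-of-variable identities of Section~\ref{sec:krein:change} (specifically~\eqref{eq:inner:sub}) give
\formula{
 \int_0^\infty A(s)(\ph_\lambda(s))^2\, ds & = \int_0^\infty a(t)(\ph_\lambda(t))^2\, dt = \lambda^{\alpha/2 - 1} ,
}
so that the constant $\gamma$ prescribed by Theorem~\ref{thm:est} is exactly
\formula{
 \gamma & = \Bigl(\int_0^\infty A(s)(\ph_\lambda(s))^2\, ds\Bigr)^{-1} = \lambda^{1 - \alpha/2} .
}
This matches precisely the weight on $V$ in the potential $-\Delta + \lambda^{1-\alpha/2} V$ appearing in the statement of the corollary, so that the two sequences of eigenvalues $\lambda_n$ in Theorem~\ref{thm:est} and in Corollary~\ref{cor:est:fr} coincide.

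Given this identification of $\gamma$, the conclusion $\mu_n \le \lambda^{\alpha/2}$ follows immediately from Theorem~\ref{thm:est}, since $\psi(\lambda) = \lambda^{\alpha/2}$. There is no real obstacle in the argument: the analytical content sits entirely in Theorem~\ref{thm:est} (which builds a test function of product form $\ph_\lambda(s) f(x)$ out of any trial function $f$ in the span of the first $n$ eigenfunctions of $-\Delta + \gamma V$) and in the closed-form Bessel integral recalled just above the corollary. The only care needed is a one-line check that the coefficient $a(t)$ used in Section~\ref{sec:ex:cs} does correspond to $\psi(\lambda) = \lambda^{\alpha/2}$, and that $\int A(s)(\ph_\lambda)^2 ds$ and $\int a(t)(\ph_\lambda)^2 dt$ really do agree under the change of variable $s = \sigma(t)$; both are routine.
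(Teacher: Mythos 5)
Your proposal is correct and takes essentially the same route as the paper: the corollary is obtained by specialising Theorem~\ref{thm:est} to $\psi(\lambda)=\lambda^{\alpha/2}$ with $\gamma = \bigl(\int_0^\infty a(t)(\ph_\lambda(t))^2\,dt\bigr)^{-1} = \lambda^{1-\alpha/2}$, using exactly the Bessel integral computed just before the statement. Your additional checks (the normalisation of $a(t)$ via the reflection formula and the change-of-variable identity~\eqref{eq:inner:sub} identifying $\int A(s)(\ph_\lambda(s))^2\,ds$ with $\int a(t)(\ph_\lambda(t))^2\,dt$) are correct and are implicitly used by the paper.
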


This can be specialised further when $V$ is homogeneous with degree $p > 0$. Indeed, in this case if $f$ is an eigenfunction of $-\Delta + V(x)$ with eigenvalue $\lambda$, then $f_a(x) = f(a x)$ satisfies $\Delta f_a(x) = a^2 \Delta f(a x) = a^2 V(a x) f(a x) - a^2 \lambda f(a x) = a^{2 + p} V(x) f_a(x) - a^2 \lambda f_a(x)$. Therefore, $f_a$ is an eigenfunction of $-\Delta + a^{2 + p} V(x)$ with eigenvalue $a^2 \lambda$. In other words, if $\gamma = a^{2 + p}$, then $f_a$ is the eigenfunction of $-\Delta + \gamma V(x)$ with eigenvalue $\gamma^{2 / (2 + p)} \lambda$.

\begin{corollary}\label{cor:est:fr:hom}
Suppose that $V$ is a locally bounded, positive (except at zero) potential which is homogeneous with degree $p > 0$. Let $\lambda_n$ be the non-decreasing sequence of eigenvalues of $-\Delta + V(x)$, and $\mu_n$ be the non-decreasing sequence of eigenvalues of $(-\Delta)^{\alpha/2} + V(x)$. Then
\formula{
 \mu_n & \le \lambda_n^{(2 + p) \alpha / (2 \alpha + 2 p)} .
}
\end{corollary}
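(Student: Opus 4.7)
The plan is to combine the scaling remark stated immediately before the corollary with Corollary \ref{cor:est:fr}. The homogeneity of $V$ gives an explicit formula for how the eigenvalues of $-\Delta + \gamma V$ scale with $\gamma$, and Corollary \ref{cor:est:fr} provides an estimate for $\mu_n$ in terms of the $n$-th eigenvalue of $-\Delta + \lambda^{1-\alpha/2} V$. Choosing $\lambda$ to match these two quantities yields the bound.

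More concretely, I would first invoke the scaling observation: if $\lambda_n$ is the $n$-th eigenvalue of $-\Delta + V(x)$, then for any $\gamma > 0$ the $n$-th eigenvalue of $-\Delta + \gamma V(x)$ equals $\gamma^{2/(2+p)} \lambda_n$. Applying this with $\gamma = \lambda^{1 - \alpha/2}$ shows that the $n$-th eigenvalue of $-\Delta + \lambda^{1-\alpha/2} V$ is $\lambda^{(2-\alpha)/(2+p)} \lambda_n$.

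Next I would apply Corollary \ref{cor:est:fr}: $\mu_n \le \lambda^{\alpha/2}$ holds whenever $n$ is the largest index with
\formula{
 \lambda^{(2-\alpha)/(2+p)} \lambda_n & \le \lambda ,
}
equivalently, whenever $\lambda_n \le \lambda^{(p + \alpha)/(2 + p)}$. Choosing $\lambda$ so that equality holds, namely $\lambda = \lambda_n^{(2+p)/(p+\alpha)}$, one obtains
\formula{
 \mu_n & \le \lambda^{\alpha/2} = \lambda_n^{(2+p)\alpha / (2\alpha + 2p)} ,
}
which is the claimed inequality.

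The only genuine thing to check is that the scaling remark preceding the corollary produces a complete orthonormal system of eigenfunctions of $-\Delta + \gamma V$ of the form $f_n(a \cdot)$ with the appropriate eigenvalues; this is a routine consequence of homogeneity of $V$ together with the fact that $f \mapsto f(a \cdot)$ is, up to a constant, a unitary dilation on $\leb^2(\R^d)$, so the ordered eigenvalue sequence transforms in the claimed way. I do not expect any substantive obstacle beyond bookkeeping of exponents.
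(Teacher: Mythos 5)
Your proposal is correct and follows essentially the same route as the paper: invoke the dilation/homogeneity remark to identify the $n$-th eigenvalue of $-\Delta + \lambda^{1-\alpha/2}V$ as $\lambda^{(2-\alpha)/(2+p)}\lambda_n$, then apply Corollary~\ref{cor:est:fr} with $\lambda = \lambda_n^{(2+p)/(p+\alpha)}$ to get $\mu_n \le \lambda^{\alpha/2} = \lambda_n^{(2+p)\alpha/(2\alpha+2p)}$. The exponent bookkeeping matches the paper's proof exactly.
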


\begin{proof}
Choose $\lambda > 0$. Then the $n$-th smallest eigenvalue of $-\Delta + \lambda^{1 - \alpha/2} V$ is $\lambda^{(2 - \alpha) / (2 + p)} \lambda_n$. This is equal to $\lambda$ if $\lambda^{1 - (2 - \alpha) / (2 + p)} = \lambda_n$, that is, $\lambda = \lambda_n^{(2 + p) / (\alpha + p)}$.
\end{proof}

%
%

\section{Probabilistic motivations}
\label{sec:fk}

We end this article with a brief discussion of our results from the point of view of probability theory.

\subsection{Lévy processes}

If $\psi$ is a complete Bernstein function (more generally, if $\psi$ is a Bernstein function), then $\psi(-\Delta)$ is the generator of a Lévy process $X(t)$ in $\R^d$, which can be obtained from the standard Brownian motion $B(t)$ by a random change of time. More precisely, the process $X(t)$ can be constructed as $B(S(t))$, where $S(t)$ is a subordinator (an increasing Lévy process) such that the Laplace transform of the distribution of $S(t)$ is $\exp(-t \psi(\lambda))$, and such that $B(t)$ and $S(t)$ are independent processes. By a simple calculation, the Fourier transform of the distribution of $S(t)$ is equal to $(2 \pi)^{-d/2} \exp(-t \psi(|\xi|^2))$. For more information about Lévy processes, generators and subordination, we refer to~\cite{bib:s99,bib:ssv12}.

The semi-group generated by $-\psi(-\Delta)$ is a Feller semi-group (that is, a strongly continuous semi-group of operators on $\cont_0(\R^d)$), and the operators $\exp(-t \psi(-\Delta))$ are strongly Feller (that is, they map bounded measurable functions into continuous ones) if and only if $\exp(-t \psi(|\xi|^2))$ is integrable for every $t > 0$. These properties are inherited by the semi-group of $-\psi(-\Delta) + V(x)$, provided that $V(x)$ is bounded below and locally bounded above. In fact a more general statement is true, it is sufficient to assume that $V(x)$ is in an appropriate Kato class, see, for example, Theorem~2.5 in~\cite{bib:dv00}.

\subsection{Traces of diffusions}

The operators given in~\eqref{eq:oph}, \eqref{eq:oph:t} and~\eqref{eq:oph:s} are generators of certain diffusions $Y(t)$ in $\hh$, reflected on the boundary. The Lévy process $X(t)$ discussed in the previous section can be obtained as the trace of $Y(t)$ on the boundary. More precisely, the set of times $\{t > 0 : Y(t) \in \partial \hh\}$ is equal to the range of some subordinator $R(t)$ (namely, the inverse local time of the first coordinate of $Y(t)$ at zero), and $X(t)$ is equal to the process $Y(R(t))$.

Note that the above interpretation of $X(t)$ coincides with the one given in the previous section (as a subordinate Brownian motion) only when $a(t)$ or $A(s)$ is constant, and $X(t)$ is the Cauchy process. Indeed, the last $d$ coordinates of $Y(t)$ are not independent from the process $R(t)$, unless $a(t) = 1$ or $A(s) = 1$, so $Y(R(t))$ is not equivalent to subordination of the last $d$ coordinates of $Y(t)$ using the subordinator $R(t)$.

We remark that traces of Markov processes on appropriate sets have been studied in general in terms of quadratic (Dirichlet) forms, see~\cite{bib:cfy06}, as well as in probabilistic context, see~\cite{bib:ksv11}.

%
%

\medskip

\subsection*{Acknowledgements}
The authors thank Moritz Ka{\ss}mann for stimulating discussions on the subject of this article and Rupert Frank for valuable comments to a preliminary version of the paper.

%
%

%
%

\appendix

%
%

\section{Krein's string theory}
\label{app:krein}

\subsection{Extension to general Krein's strings}

For simplicity, most results have been stated for regular Krein's strings. However, their extension to the general case is typically straightforward. In this section we discuss the necessary changes.

Recall that Krein's string is a locally finite nonnegative measure on $[0, R)$, where $R \in (0, \infty]$. As pointed out in Theorem~\ref{thm:krein}, if $R$ is finite and $(R - s) A(ds)$ is a finite measure, then we need to impose a Dirichlet boundary condition at $s = R$ in the definition of $\ph_\lambda$.

Theorem~\ref{thm:krein} in Section~\ref{sec:krein:thm} is already stated for general Krein's strings. Change of variable described in Section~\ref{sec:krein:change} is not applicable unless $A(ds)$ is a function. In this case $A(s)$ can still be defined on a finite interval only, as well as the corresponding coefficient $a(t)$. We return to this subject in part~\ref{app:finite}.

We now turn our attention to Section~\ref{sec:h}. Definition~\ref{def:formh:s} takes the following form.

\begin{definition}
\label{def:formh:s:g}
For a function $u$ on $\hh$, we let
\formula[eq:formh:s:g]{
 \form_H(u, u) & = \int_0^R \int_{\R^d} (\partial_s u(s, x))^2 dx ds + \int_{[0, R)} \int_{\R^d} |\nabla_x u(s, x)|^2 dx A(ds) ,
}
with $u \in \dom(\form_H)$ if and only if the following conditions are satisfied: $u(s, x)$ is weakly differentiable in $s$; $u(s, \cdot)$, as a function in $\leb^2(\R^d)$, depends continuously on $s$ (which asserts that~\eqref{eq:u:int} holds for all $S \in [0, R)$);
\formula{
 \int_{[s_0, s_1)} \int_{\R^d} |u(s, x)|^2 dx A(ds) < \infty
}
whenever $0 < s_0 < s_1 < R$; for $A(ds)$-almost all $s$ the weak gradient $\nabla_x u(s, x)$ exists, and the integrals in~\eqref{eq:formh:s:g} are finite; if $R$ is finite and $A$ is a finite measure, then in addition $u(s, \cdot)$ converges in $\leb^2(\R^d)$ to zero as $s \to R^-$.
\end{definition}

We emphasize that since the measure $A$ may fail to be absolutely continuous, we need to assume that formula~\eqref{eq:u:int} holds for all $S \in [0, R)$, or at least for almost all $S \in [0, R)$ with respect to both the Lebesgue measure and $A$.

It is somewhat easier to describe $\dom(\form_H)$ in terms of the Fourier variable: formula~\eqref{eq:formh:s:f} becomes
\formula[eq:formh:s:f:g]{
 \form_H(u, u) & = \int_0^R \int_{\R^d} (\partial_s \fourier_x u(s, \xi))^2 d\xi ds + \int_{[0, R)} \int_{\R^d} |\xi|^2 |\fourier_x u(s, \xi)|^2 d\xi A(ds) ,
}
and $u \in \dom(\form_H)$ if and only if: $\fourier_x u(s, \xi)$ is weakly differentiable in $s$; $u(s, \cdot)$, as a function in $\leb^2(\R^d)$, depends continuously on $s$; the integral in~\eqref{eq:formh:s:f:g} is finite; and if $R$ is finite and $A$ is a finite measure, then in addition $\fourier_x u(s, \cdot)$ converges in $\leb^2(\R^d)$ to zero as $s \to R^-$.

The notion of the harmonic extension $u = \ext(f)$ (Definition~\ref{def:ext}) remains the same in the general case; so does Theorem~\ref{thm:oph:s}. The proofs of Theorems~\ref{thm:forms:s} and~\ref{thm:min} require only notational changes: replacing $\int_0^\infty$ and $A(s) ds$ by $\int_{[0, R)}$ and $A(ds)$. Theorem~\ref{thm:harm} extends without modifications to the case of finite $R$ when the measure $(R - s) A(ds)$ is infinite. When $(R - s) A(ds)$ is a finite measure, one needs to additionally impose a Dirichlet-type condition that $\|u(s, \cdot)\|_2$ converges to $0$ as $s \to R^-$.

For reader's convenience, we re-state the above theorems in the general context discussed above.

\begin{theorem}
\label{thm:harm:G}
Suppose that $f \in \leb^2(\R^d)$ and $u = \ext(f)$. Then $u$ satisfies
\formula[eq:harm:G]{
 \partial_s^2 u(s, x) + A(ds) \Delta_x u(s, x) & = 0
}
in $\hh$ in the sense of distributions. Furthermore, $f \in \dom(\op)$ if and only if the limit in the definition of $\partial_s u(0, \cdot)$ exists in $\leb^2(\R^d)$, and in this case
\formula{
 \op f & = -\partial_s u(0, \cdot) = -\lim_{s \to 0^+} \partial_s u(s, \cdot) .
}
Finally, suppose that $u$ satisfies~\eqref{eq:harm:G} in the sense of distributions, $\|u(s, \cdot)\|_2$ is a bounded function of $s$, and in addition, when $R$ is finite and $(R - s) A(ds)$ is a finite measure, $u(s, \cdot)$ converges to $0$ in $\leb^2(\R^d)$ as $s \to R^-$. Then $u = \ext(f)$ for some $f \in \leb^2(\R^d)$.
\end{theorem}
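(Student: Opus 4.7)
The plan is to mirror the proof of Theorem~\ref{thm:harm} in the regular case, working throughout in the Fourier variable so that the PDE decouples into a one-parameter family of ODEs indexed by $\xi$, each of which is the Krein equation from Theorem~\ref{thm:krein}. The three assertions are then checked by combining Plancherel's theorem, Fubini, and the basic properties of the fundamental solution $\ph_\lambda(s)$, with extra care taken at $s = R$ when the measure $A$ has finite total mass on $[0,R)$ so that a Dirichlet condition must be enforced.

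For the first assertion, I would interpret~\eqref{eq:harm:G} distributionally as
\formula{
 \int_0^R \int_{\R^d} u(s,x)\, g''(s)\, h(x)\, dx\, ds \;+\; \int_{[0,R)} \int_{\R^d} u(s,x)\, g(s)\, \Delta h(x)\, dx\, A(ds) \;=\; 0
}
for all $g \in \cont_c^\infty((0,R))$ and $h \in \cont_c^\infty(\R^d)$. Substituting $u = \ext(f)$ and applying Plancherel in $x$ (noting that the Fourier symbol of $\Delta$ is $-|\xi|^2$) reduces the left-hand side to
\formula{
 \int_{\R^d} \fourier f(\xi)\, \overline{\fourier h(\xi)} \left[ \int_0^R g''(s)\, \ph(|\xi|^2, s)\, ds \;-\; |\xi|^2 \int_{[0,R)} g(s)\, \ph(|\xi|^2, s)\, A(ds) \right] d\xi,
}
and the bracket vanishes for every $\xi$ by the very definition of $\ph_{|\xi|^2}$ as a distributional solution of $\ph_\lambda'' = \lambda A(ds) \ph_\lambda$ on $(0,R)$. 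The integrability required to justify Fubini comes from the bound $\ph(\lambda, s) \le 1$ and the compact support of $g, h$.

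The second assertion is immediate once one notes that $\fourier_x \partial_s u(s,\xi) = \partial_s \ph(|\xi|^2, s)\, \fourier f(\xi)$, that $-\partial_s \ph(\lambda, 0^+) = \psi(\lambda)$, and that $-\partial_s \ph(\lambda, s)$ is non-increasing in $s$ (since $\ph_\lambda$ is convex) while bounded by $\psi(\lambda)$. Dominated convergence then gives convergence in $\leb^2(\R^d)$ precisely when $\psi(|\xi|^2) \fourier f(\xi) \in \leb^2(\R^d)$, i.e.\ when $f \in \dom(\op)$, and in that case $-\partial_s u(0,\cdot) = \op f$. Conversely, monotone convergence forces $f \in \dom(\op)$ whenever the $\leb^2$ limit exists.

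For the converse direction I would repeat the argument from Theorem~\ref{thm:harm}: testing the weak equation against a countable linearly dense family of pairs $(g, h)$ shows that for almost every $\xi \in \R^d$, the slice $\fourier_x u(\cdot, \xi)$ is a distributional solution of $w'' = |\xi|^2 A(ds)\, w$ on $(0, R)$. By Theorem~\ref{thm:krein}, the solution space is two-dimensional, spanned by $\ph(|\xi|^2, \cdot)$ and a second, monotonically diverging solution (in the case $R = \infty$ or $(R-s)A(ds)$ infinite) or by two solutions of which exactly one satisfies the Dirichlet condition at $R$ (in the remaining case). The hypothesis that $\|u(s,\cdot)\|_2$ is bounded, combined via Plancherel with a Fatou/subsequence argument, rules out the component along the non-$\ph_{|\xi|^2}$ solution for a.e.\ $\xi$; when $A$ is finite on a finite interval, the extra assumption $u(s,\cdot) \to 0$ in $\leb^2$ as $s \to R^-$ enforces the Dirichlet condition at $s=R$ on the slice and achieves the same conclusion. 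Hence $\fourier_x u(s,\xi) = \ph(|\xi|^2, s) F(\xi)$ for some $F$, and boundedness of $\|\fourier_x u(s,\cdot)\|_2$ at $s = 0$ yields $F \in \leb^2(\R^d)$, so $u = \ext(\fourier^{-1} F)$. The main obstacle I anticipate is the bookkeeping at the right endpoint $s = R$: one must verify that the two cases (infinite versus finite $(R-s)A(ds)$) each give a unique admissible normalised solution, and that the chosen hypothesis on $u$ is exactly strong enough to eliminate the unwanted branch on a set of full measure in $\xi$.
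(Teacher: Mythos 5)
Your proposal is correct and follows essentially the same route as the paper: the paper proves this result simply by observing that the proofs of Theorems~\ref{thm:oph:s} and~\ref{thm:harm} (Fourier decoupling into the Krein ODE, Plancherel/Fubini, monotone and dominated convergence for the Dirichlet-to-Neumann identity, and the dichotomy ``multiple of $\ph_{|\xi|^2}$ versus divergent/non-Dirichlet solution'' for the converse) carry over with only notational changes, plus the extra Dirichlet condition at $s = R$ exactly as you describe. Only a small caveat: the relevant dichotomy at the right endpoint is whether $(R-s)A(ds)$ is finite or infinite (not whether $A$ itself is finite), which you state correctly in one place and slightly misstate in another, but this does not affect the argument.
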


\begin{theorem}
\label{thm:forms:G}
Let $f \in \leb^2(\R^d)$ and $u = \ext(f)$. Then $f \in \dom(\form)$ if and only if $u \in \dom(\form_H)$. If $v \in \dom(\form_H)$, $f(x) = v(0, x)$ and $u = \ext(f)$, then $f \in \dom(\form)$ and
\formula{
 \form_H(v, v) & \ge \form_H(u, u) = \form(f, f) .
}
Moreover, the space $\dom(\form_H)$ is a direct sum of $\dom_0$ and $\dom_\harm$, where
\formula{
 \dom_0 & = \{ u \in \dom(\form_H) : u(0, x) = 0 \text{ for almost all } x \in \R^d \} , \\
 \dom_\harm & = \{ \ext(f) : f \in \dom(\form) \}
}
are orthogonal to each other with respect to $\form_H$.
\end{theorem}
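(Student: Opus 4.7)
The plan is to follow the same Fourier-in-$x$ plus string-variational-principle argument that proved Theorems~\ref{thm:forms:s} and~\ref{thm:min}, with the Fourier representation \eqref{eq:formh:s:f:g} in place of \eqref{eq:formh:s:f}. The key preliminary I would establish first is the general Krein-string analogue of the variational inequality \eqref{eq:ph:min}: for every absolutely continuous $f$ on $[0,R)$ with $f(0) = 1$ (and $f(R^-) = 0$ in the case $R < \infty$ with $(R-s)A(ds)$ finite),
\[ \int_0^R (f'(s))^2 \, ds + \lambda \int_{[0,R)} (f(s))^2 \, A(ds) \ge \psi(\lambda) , \]
with equality if and only if $f = \ph_\lambda$. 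The proof is a line-by-line transcription of the regular-case argument from Section~\ref{sec:krein:thm}: one uses that $\ph_\lambda'' = \lambda A(ds) \ph_\lambda$ holds distributionally, integrates by parts against $g = f - \ph_\lambda$, and observes that the cross term vanishes while a nonnegative remainder is left. The only genuinely new point is the boundary contribution at $s = R$: when $(R - s) A(ds)$ is infinite one knows $\ph_\lambda(R^-) = 0$ and $g(s) \ph_\lambda'(s) \to 0$ automatically, while when $(R-s)A(ds)$ is finite the Dirichlet condition built into $f$ forces $g(R^-) = 0$.

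With this in hand, the identification $\form_H(\ext(f), \ext(f)) = \form(f, f)$ follows exactly as in Theorem~\ref{thm:forms:s}: Fubini in $(s, \xi)$ together with fiberwise equality in the above variational principle applied to $\ph(|\xi|^2, \cdot)$ converts \eqref{eq:formh:s:f:g} into $\int_{\R^d} \psi(|\xi|^2) |\fourier f(\xi)|^2 d\xi$. The domain conditions of Definition~\ref{def:formh:s:g} are verified from $\|\ext(f)(s, \cdot)\|_2 \le \|f\|_2$ (giving continuity of $u(s, \cdot)$ as an $\leb^2$-valued function of $s$, and thus the replacement of \eqref{eq:u:int} for all $S \in [0,R)$), local finiteness of $A$, and the automatic vanishing $\ph(|\xi|^2, R^-) = 0$ in the Dirichlet case, which yields the required $\leb^2$-limit of $u(s, \cdot)$ as $s \to R^-$.

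For the minimization half, I would pass $v \in \dom(\form_H)$ to Fourier in $x$ and invoke the ACL characterization of weak differentiability together with continuity of $v(s, \cdot)$ in $\leb^2$ to conclude that, after modification on a null set, for a.e.\ $\xi \in \R^d$ the map $s \mapsto w(s, \xi) := \fourier_x v(s, \xi)$ is absolutely continuous on $[0, R)$ with $w(0, \xi) = \fourier f(\xi)$ (and $w(R^-, \xi) = 0$ in the Dirichlet case, inherited from the corresponding condition on $v$). Applying the extended variational principle fiberwise to $s \mapsto \re(w(s, \xi) / w(0, \xi))$ (and trivially when $w(0, \xi) = 0$) with $\lambda = |\xi|^2$, then integrating in $\xi$ via \eqref{eq:formh:s:f:g}, gives $\form_H(v, v) \ge \form(f, f) = \form_H(\ext(f), \ext(f))$. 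The direct sum decomposition follows by writing $v = \ext(f) + (v - \ext(f))$ with the second summand in $\dom_0$; orthogonality of $\dom_\harm$ and $\dom_0$ under $\form_H$ is then extracted by the usual perturbation argument $\form_H(u + \alpha w, u + \alpha w) \ge \form_H(u, u)$ for $u \in \dom_\harm$, $w \in \dom_0$, $\alpha \in \R$.

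The main obstacle I expect is the careful treatment of boundary behavior at $s = R$, both in the fiberwise variational principle and in the definition of $\dom(\form_H)$: one must ensure that the Dirichlet condition built into Definition~\ref{def:formh:s:g} (when $R$ is finite and $A$ is a finite measure) matches up with the vanishing of $\ph_\lambda(R)$ prescribed by Theorem~\ref{thm:krein}, and that the intermediate case of infinite $(R - s) A(ds)$ on a finite interval also produces the correct automatic Dirichlet limit for $\fourier_x v(s, \cdot)$. Once these boundary issues are uniformly handled, the rest of the argument is a transparent transcription of the regular-case proofs, with the only notational change being replacement of $\int_0^\infty \cdots A(s) ds$ by $\int_{[0,R)} \cdots A(ds)$.
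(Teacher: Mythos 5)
Your proposal is correct and follows essentially the same route as the paper, which proves Theorem~\ref{thm:forms:G} precisely by observing that the proofs of Theorems~\ref{thm:forms:s} and~\ref{thm:min} go through after replacing $\int_0^\infty \cdots A(s)\,ds$ by $\int_{[0,R)} \cdots A(ds)$, with the general form of~\eqref{eq:ph:min} obtained by substituting a Fubini argument for the integration by parts, and with the boundary behaviour at $s = R$ handled as you describe. One small bookkeeping correction: the case that genuinely needs the \emph{automatic} Dirichlet limit for the fibres $\fourier_x v(s, \cdot)$ is $R < \infty$ with $A$ infinite but $(R - s) A(ds)$ finite (there $\ph_\lambda(R^-) = 0$ is imposed in Theorem~\ref{thm:krein}, yet Definition~\ref{def:formh:s:g} carries no explicit condition, and the vanishing follows from finite fibre energy together with non-integrability of $A$ near $R$), not the case of infinite $(R - s) A(ds)$, where your fibrewise lemma imposes no boundary condition on competitors at all.
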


With these extensions, the applications given in Sections~\ref{sec:ch} and~\ref{sec:eig} extend immediately to general Krein's strings.

\subsection{Change of variable for strings of finite length}
\label{app:finite}

In Definition~\ref{def:formh:t} we can now include coefficients $a(t)$ such that $(a(t))^{-1}$ is integrable over $[0, \infty)$, and possibly defined on an interval $[0, r)$ only, where $r \in (0, \infty]$. If $\sigma(t)$ is bounded, then the coefficient $A(s)$ is only defined on $[0, R)$ for finite $R = \lim_{t \to r^-} \sigma(t)$.

If $a(t)$ is integrable over $[0, r)$, then $A(s)$ is integrable on $[0, R)$. In this case we can extend $A(s)$ so that $A(s) = 0$ for $s \ge R$, and we can still apply Lemma~\ref{lem:formh:sub}, provided that we assume that $u(R, \cdot)$ is the limit of $u(\sigma(t), \cdot) = \tilde{u}(t, \cdot)$ as $t \to r^-$, and $u(s, x) = u(R, x)$ for $s \ge R$. It is then easy to see that the variational description of the eigenvalues in terms of the form $\tilde{\form}_H$ in Section~\ref{sec:var} remains unaltered.

However, if we instead consider $A(s)$ to be a Krein's string with length $R$, then Definition~\ref{def:formh:t} requires an additional condition that $\tilde{u}(t, \cdot)$ converges in $\leb^2(\R^d)$ to zero as $t \to r^-$ in order that $\tilde{u} \in \dom(\tilde{\form}_H)$.

If $a(t)$ is not integrable on $[0, r)$, then no further conditions are necessary: if $(a(t))^{-1}$ is integrable and $\tilde{u} \in \dom(\tilde{\form}_H)$, then $\tilde{u}(t, \cdot)$ automatically converges to $0$ in $\leb^2(\R^d)$ as $t \to r^-$.

Note that the following seven combinations are possible:
\begin{enumerate}[label={\rm (\alph*)}]
\item\label{it:a} $r = \infty$, $(a(t))^{-1}$ not integrable ($R = \infty$), $a(t)$ not integrable ($A$ infinite);
\item\label{it:b} $r = \infty$, $(a(t))^{-1}$ not integrable ($R = \infty$), $a(t)$ integrable ($A$ finite);
\item\label{it:c} $r = \infty$, $(a(t))^{-1}$ integrable ($R < \infty$), $a(t)$ not integrable ($A$ infinite);
\item\label{it:d} $r < \infty$, $(a(t))^{-1}$ not integrable ($R = \infty$), $a(t)$ not integrable ($A$ infinite);
\item\label{it:e} $r < \infty$, $(a(t))^{-1}$ not integrable ($R = \infty$), $a(t)$ integrable ($A$ finite);
\item\label{it:f} $r < \infty$, $(a(t))^{-1}$ integrable ($R < \infty$), $a(t)$ not integrable ($A$ infinite);
\item\label{it:g} $r < \infty$, $(a(t))^{-1}$ integrable ($R < \infty$), $a(t)$ integrable ($A$ finite),
\end{enumerate}
and only case~\ref{it:d} seems to be rather exotic: examples corresponding to other cases are given in Section~\ref{sec:ex}. More specifically, the classical and Caffarelli--Silvestre extension problems for $(-\Delta)^{\alpha/2}$ (see Sections~\ref{sec:ex:classical} and~\ref{sec:ex:cs}) correspond to~\ref{it:a}. The extension problem for $(m^2 - \Delta)^{\alpha/2} - m$ (see Section~\ref{sec:ex:relat}) is an example of type~\ref{it:b}, while the one for \mbox{$(m^2 - \Delta)^{\alpha/2} + m$} (see Section~\ref{sec:ex:relat:dual}) belongs to~\ref{it:c}. The operators related to the theory of linear water waves (see Section~\ref{sec:ex:waves}), with Fourier symbols $|\xi| \tanh(R |\xi|)$ and $|\xi| (\tanh(R |\xi|))^{-1}$ are clearly of type~\ref{it:g}. Finally, the example described in Section~\ref{sec:ex:nonstandard} is of type~\ref{it:e} or~\ref{it:f}, depending on the value of the parameter $\alpha$.

\subsection{Existence and properties of $\ph_\lambda$}

For completeness, we recall some basic facts from Krein's spectral theory of strings. Throughout this section $A(ds)$ is a Krein's string, that is, a locally finite measure on $[0, R)$, with possibly infinite $R > 0$.

\begin{theorem}[see~\cite{bib:kw82}]
\label{thm:krein:kw}
Let $\lambda \ge 0$. The Cauchy problem
\formula{
 \begin{cases}
 f''(s) = \lambda f(s) A(ds) , \\
 f(0) = a, \\
 f'(0) = b
 \end{cases}
}
has a unique solution on $[0, R)$ for any $a, b \in \R$. Here the second derivative is understood in the sense of distributions, that is, we require that
\formula{
 f(S) & = f(0) + f'(0) S + \int_{[0, S)} (S - s) \lambda f(s) A(ds)
}
for $s \in [0, R)$.

If $f_N$ and $f_D$ are the solutions of the above problem with $f_N(0) = f_D'(0) = 1$, $f_N'(0) = f_D(0) = 0$, then
\formula[eq:krein:kw]{
 \lim_{s \to R^-} \frac{f_D(s)}{f_N(s)} & = \int_0^R \frac{1}{(f_N(s))^2} \, ds = \frac{1}{\psi(\lambda)}
}
for a complete Bernstein function $\psi$.

Conversely, for any complete Bernstein function $\psi$ there is a unique corresponding Krein's string $A(ds)$, so that \emph{Krein's correspondence} is a bijection between Krein's strings and complete Bernstein functions.
\end{theorem}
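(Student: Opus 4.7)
The plan is to handle the four assertions in sequence: well-posedness of the distributional Cauchy problem, the Wronskian identity $f_D/f_N = \int f_N^{-2}$, the complete Bernstein character of the resulting $\psi$, and the bijectivity of the Krein correspondence. The first two are essentially routine ODE manipulations; the last two rest on classical inverse spectral machinery.

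For existence and uniqueness I would rewrite the problem as the Volterra-type integral equation
\[
 f(S) = a + b S + \lambda \int_{[0,S)} (S-s) f(s) A(ds),
\]
and run Picard iteration on each compact subinterval $[0, T] \subset [0, R)$. Since $A$ is locally finite, the $n$-th iterate of the integral operator is bounded in sup-norm by $(\lambda T A([0, T)))^n / n!$ times the initial data, so the Neumann series converges absolutely to the unique continuous solution; exhausting $[0, R)$ by such $T$ gives a unique global solution. The same argument works verbatim for complex $\lambda$, producing $f_N(s, \lambda)$ and $f_D(s, \lambda)$ that are entire in $\lambda$ for each fixed $s \in [0, R)$; this holomorphic dependence will be essential in the final step.

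The ratio identity is a Wronskian computation. For any two distributional solutions $f, g$ of $f'' = \lambda f A(ds)$, the measure $f g'' - f'' g = \lambda f g A - \lambda g f A$ vanishes, so the distributional derivative of $W := f g' - f' g$ is zero and $W$ is constant; applied to $f_N, f_D$ and evaluated at $s = 0$ this gives $W \equiv 1$, whence $(f_D/f_N)' = 1/f_N^2$ wherever $f_N > 0$. For $\lambda \ge 0$, the conditions $f_N(0) = 1$, $f_N'(0) = 0$, $f_N'' = \lambda f_N A \ge 0$ force $f_N$ to be convex and nondecreasing with $f_N \ge 1$ on $[0, R)$, so integrating yields $f_D(s)/f_N(s) = \int_0^s f_N^{-2} du$, a monotone function whose limit as $s \to R^-$ exists in $(0, \infty]$ and is denoted $1/\psi(\lambda)$. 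Strict positivity of $\psi$ on $(0, \infty)$ is immediate.

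The main obstacle is establishing the complete Bernstein character of $\psi$ and the converse bijectivity. I would invoke the Pick-function characterisation: it is enough to combine the already-verified nonnegativity of $\psi$ on $(0, \infty)$ with the upper-half-plane invariance $\im \psi(\lambda) \ge 0$ for $\im \lambda > 0$. Holomorphic extension to $\C \setminus (-\infty, 0]$ is supplied by the entireness of $f_N, f_D$ in $\lambda$ together with a Weyl limit-point/limit-circle analysis for the self-adjoint realisation of $f'' = \lambda f A(ds)$ on $\leb^2([0, R), A(ds))$ with Neumann boundary condition at $0$: for $\lambda$ off the negative real axis there is a unique (up to scalar) square-integrable solution, necessarily of the form $f_N(\cdot, \lambda) - \psi(\lambda) f_D(\cdot, \lambda)$, so $\psi$ inherits meromorphy from the entire functions $f_N, f_D$, and the Herglotz representation of the resolvent of this self-adjoint operator translates, via the identification of $\psi$ with the associated Weyl $m$-function, into the required Pick property. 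The converse is the deeper direction: given a complete Bernstein function $\psi$, its Stieltjes integral representation encodes the spectral measure of the putative string operator, and Krein's inversion procedure, developed in detail in \cite{bib:dm76,bib:kw82}, reconstructs a unique Krein string $A(ds)$ from this spectral data. I would cite these references rather than reproduce the full Krein inversion here.
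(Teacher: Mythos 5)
Your proposal is consistent with how the paper treats this statement: the paper does not prove Theorem \ref{thm:krein:kw} at all, but quotes it from Kotani--Watanabe \cite{bib:kw82}, supplying only the remark that the Wro\'nskian $f_D'f_N - f_N'f_D$ is constant, so that $(f_D/f_N)' = 1/f_N^2$, which is exactly your second step; your Picard iteration on the Volterra equation (with the $1/n!$ bound coming from the ordered simplex) and the convexity/monotonicity argument for $f_N$ when $\lambda \ge 0$ are correct, and deferring the converse (Krein's inverse spectral theorem) to \cite{bib:dm76,bib:kw82} is precisely what the paper does.

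There is, however, one concrete flaw in the step you do spell out. To get the Pick property of $\psi$ you assert that for $\lambda$ off $(-\infty,0]$ the equation $f'' = \lambda f\,A(ds)$ has a unique (up to scalars) solution in $\leb^2([0,R),A(ds))$, so that $\psi$ is the Weyl $m$-function of ``the'' self-adjoint realisation with Neumann condition at $0$. This is only true in the limit-point case. When $R < \infty$ and $(R-s)A(ds)$ is a finite measure (equivalently, in the situation where the paper's appendix notes that \emph{all} solutions are bounded and square-integrable --- e.g.\ the water-wave examples of Section \ref{sec:ex:waves}), every solution lies in $\leb^2(A)$, square-integrability does not single out a realisation, and the operator is not essentially self-adjoint at $R$; one must impose the Dirichlet condition $f(R)=0$ there (this is exactly the boundary condition built into Theorem \ref{thm:krein}) and then verify that the function $\psi$ defined by $\lim_{s\to R^-} f_D(s)/f_N(s)$ is the $m$-function of \emph{that} realisation. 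The fix is routine Weyl limit-circle bookkeeping, but as written the identification ``$\psi$ = $m$-function, hence Herglotz'' is unjustified in these cases. Relatedly, for the claimed bijection one should note that the length $R$ is part of the string data; uniqueness of the string for a given $\psi$ is meant in that sense.
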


Note that the first equality in~\eqref{eq:krein:kw} follows from the fact that the Wrońskian $f_D' f_N - f_N' f_D$ is constant one, and therefore $1 / f_N^2 = (f_D / f_N)'$. In particular, if $\lambda \ge 0$, then $f_D / f_N$ is non-decreasing. Many further properties of Krein's correspondence, including the relationship between asymptotic behaviour of $A(ds)$ and $\psi(\lambda)$, can be found in~\cite{bib:kw82}; see also the references therein.

The function $\ph_\lambda$ satisfying the conditions of Theorem~\ref{thm:krein} can be constructed as
\formula{
 \ph_\lambda(s) & = f_N(s) - (\psi(\lambda))^{-1} f_D(s) .
}
Clearly, $\ph_\lambda''(s) = \lambda \ph_\lambda(s) A(ds)$, $\ph_\lambda(0) = 0$ and $-\ph_\lambda'(0) = \psi(\lambda)$. Furthermore, since $f_D / f_N$ is non-decreasing, $f_D(s) / f_N(s) \le \psi(\lambda)$, and therefore $\ph_\lambda(s) \ge 0$ for all $s$. It follows that $\ph_\lambda''(s) \ge 0$, so that $\ph_\lambda$ is convex. It is also easy to see that $\ph_\lambda$ is decreasing. Indeed, it is the limit as $c \to (\psi(\lambda))^{-1}$ of $f_N - c f_D$. However, if $c > (\psi(\lambda))^{-1}$, then $f_N - c f_D$ is nonnegative and convex on some initial interval $[0, s_0)$ and then it becomes negative and concave on $(s_0, R)$, and thus it is non-increasing. It follows that $\psi_\lambda$ is necessarily non-increasing.

Observe that the estimate~\eqref{eq:ph:min} (with equality if and only if $f = \ph_\lambda$) extends to the general setting: one only needs to replace the usual integration by parts by a suitable application of Fubini's theorem.

For $\lambda = 0$ we simply have $f_N(s) = 1$ and $f_D(s) = s$, so that $\ph_0(s) = 1 - \psi(\lambda) s$. Suppose that $\lambda > 0$. If $R$ is infinite, then $f_D(s) \ge s$, so that $f_D$ is unbounded. It follows that $\ph_\lambda$ is the only bounded solution of the equation $f''(s) = \lambda f(s) A(ds)$ with $f(0) = 1$. Similarly, if $R$ is finite, then $f_N$ is unbounded if and only if $(R - s) A(ds)$ is an infinite measure. If this is the case, then again $\ph_\lambda$ is the only bounded solution of $f''(s) = \lambda f(s) A(ds)$ such that $f(0) = 1$. Otherwise, all solutions of this problem are bounded.

Similarly, one can show that if $\lambda > 0$ and $A$ is an infinite measure, then $\ph_\lambda$ is the only solution of the problem $f''(s) = \lambda f(s) A(ds)$ for which $f'$ is bounded. If $A$ is a finite measure, however, then all solutions of this problem have bounded derivative.

Finally, we note that by equality in~\eqref{eq:ph:min}, for $\lambda > 0$, $(\ph_\lambda(s))^2$ is integrable over $[0, R)$ with respect to $A(ds)$. Furthermore, if $R$ is infinite or $A$ is an infinite measure, $\ph_\lambda$ is the only solution of $f''(s) = \lambda f(s) A(ds)$ such that $f(0) = 1$ and $\int_{[0, R)} (f(s))^2 A(ds) < \infty$. Otherwise the integrability condition is satisfied by all solutions of $f''(s) = \lambda f(s) A(ds)$. This explains the need for additional Dirichlet boundary condition in Definition~\ref{def:formh:s:g}.


\begin{thebibliography}{00}

\bibitem{bib:a16}
V.~Ambrosio,
\emph{Ground states solutions for a non-linear equation involving a pseudo-relativistic Schrödinger operator}.
J.~Math. Phys. 57 (2016): 051502.

\bibitem{bib:atw18}
W.~Arendt, A.~F.~M. ter~Elst, M.~Warma,
\emph{Fractional powers of sectorial operators via the Dirichlet-to-Neumann operator}.
Preprint, 2016, arXiv:1608.05707.

\bibitem{bib:bk04}
R.~Ba{\~n}uelos, T.~Kulczycki,
\emph{The Cauchy process and the Steklov problem}.
J.~Funct. Anal. 211(2) (2004): 355--423.

\bibitem{bib:bmr10}
T.~Byczkowski, J.~Małecki, M.~Ryznar,
\emph{Hitting half-spaces by Bessel-Brownian diffusions}.
Potential Anal. 33(1) (2010): 47--83.

\bibitem{bib:cs07}
L.~Caffarelli, L.~Silvestre,
\emph{An extension problem related to the fractional Laplacian}.
Comm. Partial Differential Equations 32(7) (2007): 1245--1260.

\bibitem{bib:cms90}
R.~Carmona, W.~C.~Masters, B.~Simon,
\emph{Relativistic Schr\"odinger Operators: Asymptotic Behavior of the Eigenfunctions}.
J.~Funct. Anal. 91 (1990): 117--142.

\bibitem{bib:cg11}
S.-Y.~A.~Chang, M.~d.~M.~González,
\emph{Fractional Laplacian in conformal geometry}.
Adv. Math. 226(2) (2011): 1410--1432.

\bibitem{bib:cfy06}
Z.-Q.~Chen, M.~Fukushima, J.~Ying,
\emph{Traces of symmetric Markov processes and their characterizations}.
Ann. Probab.~34(3) (2006): 1052--1102.

\bibitem{bib:cs05}
Z.~Q.~Chen, R.~Song,
\emph{Two sided eigenvalue estimates for subordinate Brownian motion in bounded domains}.
J.~Funct. Anal. 226 (2005): 90--113.

\bibitem{bib:d90}
R.~D.~DeBlassie,
\emph{The first exit time of a two-dimensional symmetric stable process from a wedge}.
Ann. Probab. 18 (1990): 1034--1070.

\bibitem{bib:d04}
R.~D.~DeBlassie,
\emph{Higher order PDE's and symmetric stable processes}.
Probab. Theory Related Fields 129 (2004): 495--536.

\bibitem{bib:dv00}
M.~Demuth, J.~A. van~Casteren,
\emph{Stochastic Spectral Theory for Selfadjoint Feller Operators}.
Springer Basel AG, 2000.

\bibitem{bib:dy06}
C.~Donati-Martin, M.~Yor,
\emph{Some Explicit Krein Representations of Certain Subordinators, Including the Gamma Process}.
Publ. RIMS, Kyoto Univ. 42 (2006): 879--895.

\bibitem{bib:dkk17}
B.~Dyda, A.~Kuznetsov, M.~Kwa{\'s}nicki,
\emph{Fractional Laplace operator and Meijer G-function}
Constructive Approx. 45(3) (2017): 427--448.

\bibitem{bib:dm76}
H.~Dym, H.~P.~McKean,
\emph{Gaussian Processes, Function Theory, and the Inverse Spectral Problem}.
Academic Press, New York-San Francisco-London, 1976.

\bibitem{bib:dkk17b}
B.~Dyda, A.~Kuznetsov, M.~Kwa{\'s}nicki,
\emph{Eigenvalues of the fractional Laplace operator in the unit ball}.
J.~London Math. Soc. 95 (2017): 500--518.

\bibitem{bib:f18}
R.~L.~Frank,
\emph{Eigenvalue bounds for the fractional Laplacian: A review}.
Preprint, 2016, arXiv:1603.09736.

\bibitem{bib:fgmt15}
R.~L.~Frank, M.~d.~M.~Gonzalez, D.~D.~Monticelli, J.~Tan,
\emph{An extension problem for the CR fractional Laplacian}.
Adv. Math. 270 (2015), 97--137.

\bibitem{bib:fl13}
R.~L.~Frank, E.~Lenzmann,
\emph{Uniqueness of non-linear ground states for fractional Laplacians in $\R$}.
Acta Math. 210(2) (2013): 261--318

\bibitem{bib:fls08}
R.~L.~Frank, E.~H.~Lieb, R.~Seiringer,
\emph{Hardy-Lieb-Thirring inequalities for fractional Schr\"odinger operators}.
J.~Amer. Math. Soc. 21(4) (2008): 925--950.

\bibitem{bib:fls15}
R.~L.~Frank, E.~Lenzmann, L.~Silvestre,
\emph{Uniqueness of Radial Solutions for the Fractional Laplacian}.
Comm. Pure Appl. Math. 69(9) (2015): 1671--1726.

\bibitem{bib:gms13}
J.~E.~Gal\'e, P.~J.~Miana, P.~R.~Stinga,
\emph{Extension problem and fractional operators: semigroups and wave equations}.
J.~Evol. Equations 13 (2013): 343--368.

\bibitem{bib:gs18}
J.~Goodman, D.~Spector,
\emph{Some remarks on boundary operators of Bessel extensions}.
Preprint, 2017, arXiv:1706.07169.

\bibitem{bib:gr07}
I.~S.~Gradshteyn, I.~M.~Ryzhik,
\emph{Table of Integrals, Series and Products}.
A.~Jeffrey, D.~Zwillinger (Eds.),
Academic Press, 2007.

\bibitem{bib:gz03}
C.~R.~Graham, M.~Zworski,
\emph{Scattering matrix in conformal geometry}.
Invent. Math. 152(1) (2003): 89--118.

\bibitem{bib:h85}
L.~Hörmander,
\emph{The analysis of linear partial differential operators. III}.
Grundlehren der mathematischen Wissenschaften, Vol. 275, Springer-Verlag, Berlin, Heidelberg, New York, Tokyo, 1985.

\bibitem{bib:jw18}
N.~Jacob, F.-Y.~Wang,
\emph{Higher Order Eigenvalues for Non-Local Schr{\"o}dinger Operators}.
Preprint, 2017, arXiv:1703.09954.

\bibitem{bib:kk10}
K.~Kaleta, T.~Kulczycki,
\emph{Intrinsic ultracontractivity for Schrödinger operators based on fractional Laplacian}
Potential Anal. 33 (2010): 313--339.

\bibitem{bib:kl15}
K.~Kaleta, J.~L\H{o}rinczi,
\emph{Pointwise eigenfuction estimates and intrinsic ultracontractivity-type properties of Feynman-Kac semigroups for a class of Lévy processes}.
Ann. Prob. 43 (2015): 1350--1398.

\bibitem{bib:kl17}
K.~Kaleta, J.~L\H{o}rinczi,
\emph{Fall-off of eigenfunctions for non-local Schrödinger operators with decaying potentials}.
Potential Anal. 46 (2017): 647--688.

\bibitem{bib:ksv11}
P.~Kim, R.~Song, Z.~Vondra\v{c}ek,
\emph{On harmonic functions for trace processes}.
Math. Nachr. 284 (14-15) (2011): 1889--1902.

\bibitem{bib:kw82}
S. Kotani, S. Watanabe,
\emph{Krein's Spectral Theory of Strings and Generalized Diffusion Processes}.
In: \emph{Functional Analysis in Markov Processes}, ed. M.~Fukushima, Lect. Note Math. 923, Springer, Berlin, 1982, pp.~235-259.

\bibitem{bib:kkms10}
T.~Kulczycki, M.~Kwa{\'s}nicki, J.~Ma{\l}ecki, A.~St{\'o}s,
\emph{Spectral Properties of the Cauchy Process on Half-line and Interval}.
Proc. London Math. Soc. 101(2) (2010): 589--622.

\bibitem{bib:k09}
M.~Kwaśnicki,
\emph{Intrinsic ultracontractivity for stable semigroups on unbounded open sets}.
Potential Anal. 31(1) (2009): 57--77.

\bibitem{bib:k11}
M.~Kwaśnicki,
\emph{Spectral analysis of subordinate Brownian motions on the half-line}.
Studia Math. 206(3) (2011): 211--271.

\bibitem{bib:k12}
M.~Kwa{\'s}nicki,
\emph{Eigenvalues of the fractional Laplace operator in the interval}.
J.~Funct. Anal. 262(5) (2012): 2379--2402.

\bibitem{bib:k17}
M.~Kwaśnicki,
\emph{Ten equivalent definitions of the fractional Laplace operator}.
Fract. Calc. Appl. Anal. 20(1) (2017): 7--51.

\bibitem{bib:lsw10}
D.~Lenz, P.~Stollmann, D.~Wingert,
\emph{Compactness of Schrödinger semigroups}.
Math. Nachr. 283(1) (2010): 94--103.

\bibitem{bib:ly88}
E.~H.~Lieb, H.-T.~Yau,
\emph{The stability and instability of relativistic matter}.
Comm. Math. Phys. 118(2) (1988): 177--213. 

\bibitem{bib:lm12}
J.~L\H{o}rinczi, J.~Ma{\l}ecki,
\emph{Spectral properties of the massless relativistic harmonic oscillator}.
J.~Differ. Equations 253 (2012): 2846--2871.

\bibitem{bib:m87}
M.~Marias,
\emph{Littlewood--Paley--Stein theory and Bessel diffusions}.
Bull. Sci. Math. 111(3) (1987): 313–331.

\bibitem{bib:mo69}
S.~A.~Molchanov, E.~Ostrowski,
\emph{Symmetric stable processes as traces of degenerate diffusion processes}.
Theor. Prob. Appl. 14(1) (1969): 128--131.

\bibitem{bib:py03}
J.~Pitman, M.~Yor,
\emph{Hitting, occupation and inverse local times of one-dimensional diffusions: martingale and excursion approaches}.
Bernoulli 9 (2003): 1--24.

\bibitem{bib:rs16}
L.~Roncal, P.~R.~Stinga,
\emph{Fractional Laplacian on the torus}.
Commun. Contemp. Math. 18(3) (2016): 1550033.

\bibitem{bib:s99}
K.~Sato,
\emph{L{\'e}vy Processes and Infinitely Divisible Distributions}.
Cambridge Univ. Press, Cambridge (1999).

\bibitem{bib:ssv12}
R.~Schilling, R.~Song, Z.~Vondra{\v{c}}ek,
\emph{Bernstein Functions: Theory and Applications}.
Studies in Math. 37, De Gruyter, Berlin, 2012.

\bibitem{bib:st10}
P.~R.~Stinga, J.~L.~Torrea,
\emph{Extension Problem and Harnack's Inequality for Some Fractional Operators}.
Comm. Partial Diff. Equations 35 (2010): 2092--2122.

\bibitem{bib:y18}
R.~Yang,
\emph{On higher order extensions for the fractional Laplacian}.
Preprint, 2013, arXiv:1302.4413.

\bibitem{bib:wiki}
Media-wiki website for non-local equations, s.v.\@ \emph{Extension Technique},
\url{https://www.ma.utexas.edu/mediawiki/index.php/Extension_technique}, accessed May~17, 2017.

\end{thebibliography}
\end{document}